\theoremstyle{plain}
\newcommand{\beq}{\begin{equation}}
\newcommand{\eeq}{\end{equation}}
\newcommand{\bna}{\begin{eqnarray}}
\newcommand{\ena}{\end{eqnarray}}
\newcommand{\bea}{\begin{eqnarray*}}
\newcommand{\eea}{\end{eqnarray*}}
\newcommand{\h}{\hspace}
\newcommand{\normmm}[1]{{\left\vert\kern-0.25ex\left\vert\kern-0.25ex\left\vert #1 
    \right\vert\kern-0.25ex\right\vert\kern-0.25ex\right\vert}}
\newtheorem*{theorem 2.1}{Theorem 2.1}
\def\XXint#1#2#3{{\setbox0=\hbox{$#1{#2#3}{\int}$}
  \vcenter{\hbox{$#2#3$}}\kern-.5\wd0}}
\newcommand{\bh}{ \mathbf{h}}
\newcommand{\C}{{\mathbb C}} 
\newcommand{\R}{{\mathbb R}}
\newcommand{\bS}{{\mathbb S}}
\newcommand{\sF}{{\mathscr F}}
\newcommand{\rL}{{\mathrm L}}
\newcommand{\rR}{{\mathrm R}}
\newcommand{\rp}{{\mathrm p}}
\newcommand{\e}{\epsilon}
\newcommand{\al}{\alpha}
\newcommand{\p}{\partial}
\newcommand{\D}{\nabla}
\newcommand{\La}{\Delta} 
\newcommand{\Div}{\operatorname{div}}
\newcommand{\Ker}{\operatorname{Ker}}
\DeclareMathOperator{\Span}{span}
\newtheorem{thm}{Theorem}[section]
\newtheorem{lemma}[thm]{Lemma}
\newtheorem{cor}[thm]{Corollary}
\begin{document}
\title{}
\begin{center}  \textbf{ \large TWISTED SOLUTIONS TO A SIMPLIFIED ERICKSEN-LESLIE EQUATION} \end{center}
\begin{center} \textbf{YUAN CHEN, \h{2pt} SOOJUNG KIM, \h{2pt} YONG YU\footnote{The third author is partially supported by RGC grant 14306414.}} \end{center}
\begin{center}\textit{Department of Mathematics, The Chinese University of Hong Kong} \end{center}


\setcounter{section}{1}
\begin{abstract}
\noindent\textbf{ABSTRACT:} 
In this article we construct  global solutions to a  simplified Ericksen--Leslie system on  $\mathbb{R}^3$.  The   constructed solutions are twisted and periodic  along the $x_3$-axis with period $d = 2\pi \big/ \mu$. Here $\mu > 0$ is the twist rate. $d$ is the distance between two planes which are parallel to the $x_1x_2$\h{0.5pt}-\h{0.5pt}plane. Liquid crystal material is placed in the region enclosed by these two planes. Given a well-prepared initial data, our solutions exist  classically for all $t \in [\h{0.5pt}0, \infty)$.   However these solutions become  singular at all points on the $x_3$-axis and escape into third dimension exponentially while $t \rightarrow \infty$. An optimal blow up rate is also obtained.
  \end{abstract}\
\\

\setcounter{footnote}{1}

\noindent \textbf{I. INTRODUCTION}\\
\\
\textbf{I.1. BACKGROUND AND MOTIVATION} \hspace{4pt}\\

\noindent Ericksen--Leslie equation is a hydrodynamical system describing nematic liquid crystal flow. For the sake of simplifying and meanwhile preserving the energy dissipative property of the original Ericksen--Leslie equation, a simplified   version was proposed by Lin in \cite{L}.  With all the parameters in the system normalized to be $1$, the simplified equation can be read as follows: 
 \begin{equation}\label{eq-main-EL}
 \left\{
 \begin{aligned}
 & \p_t \phi +u\cdot \D \phi-\La \phi=|\h{0.5pt}\D\phi \h{0.5pt}|^2\phi\quad&\mbox{in $\R^3\times(0,\infty)$};\\
 &\p_t u+u\cdot \D u-\La u=-\D \rp-\D \cdot\big( \D \phi  \odot \D \phi\big)\quad&\mbox{in $\R^3\times(0,\infty)$};\\
 &\Div  {u}=0 \quad&\mbox{in $\R^3\times(0,\infty)$}.
      \end{aligned}\right.
   \end{equation}
 In \eqref{eq-main-EL},  $\phi :\R^3\times(0,\infty)\to  \mathbb{S}^2$ represents the macroscopic orientation of a nematic liquid crystal.  $u: \R^3\times(0,\infty)\to\R^3$ is the   velocity field  of  fluid. $\rp:\R^3\times(0,\infty) \to\R$ is the pressure. The    stress tensor $\D \phi  \odot \D \phi$ is defined  with its 
$(i,j)$\h{0.5pt}-\h{0.5pt}th entry given by 
\begin{eqnarray*} 
\big(\D \phi  \odot \D \phi \big)_{ij}= \big< \p_i\phi,\, \p_j\phi\big>. 
\end{eqnarray*}
Here $\big<\cdot, \cdot\big>$ is the standard inner product  on $\mathbb{R}^3$. As one can see, the system \eqref{eq-main-EL} is   a coupled system consisting of the non-homogeneous incompressible Navier-Stokes equation and the transported heat flow of harmonic map.

Many research works have been devoted to the study of  \eqref{eq-main-EL}. We refer readers to the survey article \cite{LW} by Lin-Wang and references therein.  Most recently solutions of  \eqref{eq-main-EL} with finite time singularity have also  been constructed by authors in  \cite{HLLW}, where the spatial domain is a bounded open set in $\mathbb{R}^3$.   In contrast to \cite{HLLW}, in the current work, we are concerned with global solutions of (1.1) which  become  singular at $t = \infty$. Our motivation originates from  the twisted ansatz for nematic liquid crystal in \cite{St}. In fact our solutions are supposed to admit a special form which is given by
\begin{equation}\label{eq-form-sol}
u=   \mathscr{V}(x_1, x_2, t ) \h{20pt} \text{and}\h{20pt} 
\phi = e^{\h{0.5pt}\mu \h{0.5pt} x_3  \mathrm{R}} \h{1pt} \psi(x_1, x_2, t).
\end{equation}
Here   $\mu > 0$ is called twist   rate of nematic liquid crystal. $\mathscr{V}$ and $\psi$ are two 3-vectors. Particularly $\psi$ takes its value in $\mathbb{S}^2$. $\mathrm{R}$ in \eqref{eq-form-sol} denotes  the generator of horizontal rotations, which can be represented by 
   \begin{equation*}\displaystyle
\rR=  
 \left(
\begin{array}{ccc}
0&-1&0\\ 
1&0&0\\
0&0&0
\end{array}
\right).
\end{equation*}For a given real number $\alpha$, the exponential matrix of $\alpha \mathrm{R}$ equals to \begin{eqnarray*}  \left(
\begin{array}{ccc}
\cos \al&-\sin \al&0\\ 
\sin \al&\cos\al&0\\
0&0&1
\end{array}
\right).\end{eqnarray*}
Plugging the ansatz \eqref{eq-form-sol} into \eqref{eq-main-EL}, we obtain the equation satisfied by $ \mathscr{V}$ and $\psi$ as follows:  
\begin{gather}\label{eq-main}
 \left\{
 \begin{aligned}
 & \frac{D}{D t} \psi  -\La_2 \psi=|\h{0.5pt}\D^h \psi \h{0.5pt}|^2\psi - \mu \h{0.5pt} \mathscr{V}_3 \h{0.5pt} \mathrm{R} \psi + \mu^2 \h{0.5pt} \bigg[ \h{1pt}\mathrm{R}^2 \psi + | \h{0.5pt}\mathrm{R} \psi \h{0.5pt} |^2 \psi \h{1pt}\bigg] \quad&\mbox{in $\R^2\times(0,\infty)$};\\[3pt]
 &\frac{D}{D t} \mathscr{V}^h  -\La_2 \mathscr{V}^h  =- \D^h\mathrm q-\D^h \cdot\big( \D^h \psi  \odot \D^h \psi\big)\quad&\mbox{in $\R^2\times(0,\infty)$};\\[3pt]
 &\frac{D}{D t} \mathscr{V}_3  -\La_2 \mathscr{V}_3  =   -\mu \h{0.5pt} \big< \h{0.5pt}\La_2 \psi, \mathrm{R} \psi \h{0.5pt}\big>\quad&\mbox{in $\R^2\times(0,\infty)$},
      \end{aligned}\right.
   \end{gather}
   where $\mathscr{V}^h$ satisfies the incompressibility condition: 
   \begin{equation}\label{eq-div-free-ho-velocity}
 \D^h \cdot \mathscr{V}^h  = 0 \quad \mbox{in $\R^2\times(0,\infty)$}.
\end{equation}
In  \eqref{eq-main}-\eqref{eq-div-free-ho-velocity}, $\D^h$ and $\La_2$ are the gradient and  Laplace operators on $\mathbb{R}^2$, respectively. $\mathscr{V}^h = (\mathscr{V}_1, \mathscr{V}_2)^t$ is the horizontal part of the vector field $\mathscr{V}$. $\mathscr{V}_3$ is the third component of $\mathscr{V}$.  The differential operator $D/ Dt $ is the material derivative $\p_t + \mathscr{V}^h \cdot \D^h$.  

When $  \mu = 0$ and $\mathscr{V}_3 \equiv 0$, the system \eqref{eq-main}-\eqref{eq-div-free-ho-velocity} is then reduced to  a 2D version of \eqref{eq-main-EL}. Its vorticity formulation has been studied by authors in \cite{CY1}. In fact a global weak solution is obtained in [3] under the assumption that the initial vorticity of fluid lies in $\mathrm{L}^1(\mathbb{R}^2)$ and the initial director field  has finite Dirichlet energy. Furthermore in \cite{CY2}, when initial vorticity and director field are sufficiently close to  \begin{eqnarray}\label{sta} \left(0, \h{1pt}e^{m \theta \mathrm{R} + \alpha_0 \mathrm{R}} \h{1pt} \mathbf{h}\left(\frac{r}{\sigma_0} \right) \right)\end{eqnarray} in some norm space, then (1.1) in 2D admits
a global classical solution which has the form
\begin{eqnarray*}\mathscr{V}^h = \frac{f(r,t)}{  r^2}
\left(
\begin{array}{r}
- x_2\\ 
x_1
\end{array}
\right) \h{20pt} \text{and}\h{20pt}\psi = e^{m \theta \mathrm{R}} \h{1pt} {\psi}_*(r,t).
\end{eqnarray*}
Here $(r, \theta)$ is the polar coordinate on $\mathbb{R}^2$. $m$ is an integer with $|m| \geq 4$. $\sigma_0 > 0$ and $\alpha_0 \in \mathbb{R}$ are two constants.  For any $\rho > 0$,  $\mathbf{h}$ is a 3-vector defined by 
\begin{equation}\label{equiv-harmonic}
\mathbf{ h}(\rho)= \left(
\begin{array}{c}
\mathbf{h}_1(\rho)\\ 
0\\
\mathbf{h}_3(\rho)
\end{array}
\right),\quad \text{where}\h{5pt}\mathbf{h}_1(\rho)=\frac{2}{\rho^{|m|}+\rho^{-|m|}}\quad\text{and}\quad \mathbf{h}_3(\rho)=\frac{\rho^{|m|}-\rho^{-|m|}}{\rho^{|m|}+\rho^{-|m|}}.
 \end{equation}The authors in \cite{CY2} also show that when the absolute value of the initial circulation Reynolds number $\omega$  is suitably small, then there exists a positive constant $\sigma_{\infty}$ and an angular function $\alpha(t)$ so that \begin{eqnarray*}   
\psi \sim e^{ \h{0.5pt} m  \theta \h{0.5pt} \mathrm{R} + \alpha(t) \h{0.5pt}  \mathrm{R}} \, \mathbf{h}\left(\frac{r}{ \sigma_\infty} \h{0.5pt}\right), \h{20pt}\text{as $t \rightarrow \infty$.}
\end{eqnarray*}
 Moreover for some $\alpha_{\infty} \in \mathbb{R}$, the following limit holds for  the angular function $\alpha(t)$: \begin{eqnarray*} \lim_{t \rightarrow \infty} \big[ \h{0.5pt} 4 \h{1pt} \alpha(t) +  m  \h{0.5pt}\omega \h{0.5pt} \log t  \h{1pt}\big] = \alpha_{\infty}. \end{eqnarray*} The above limit implies that the director field keeps rotating around the $x_3$-axis as $t $ tends to $ \infty$, provided that $|\h{1pt}\omega \h{0.5pt}| $ is small and nonzero. It can be easily checked that (1.5) gives  a stationary solution to the vorticity formulation of (1.1) in 2D. The results in [4] then indicate that these stationary solutions are globally dynamically unstable in the space $\mathrm{L}^1(\mathbb{R}^2) \times \mathrm{H}_{\mathbf{e}_3}^1(\mathbb{R}^2; \mathbb{S}^2)$, where $\mathbf{e}_3 \in \mathbb{S}^2$ is the north pole.\\
 \\
\textbf{I.2. MAIN RESULTS}\\
\\
Inspired by the works \cite{CY1}-\cite{CY2}, we construct in this article a global solution  to  \eqref{eq-main}-\eqref{eq-div-free-ho-velocity}
  under the following  $m$-equivariant   ansatz:
\begin{equation}\label{eq-velocity-form}
\displaystyle
\mathscr{V} =  
 \frac{ {W}(r,t)}{  r^2}
\left(
\begin{array}{c}
- x_2\\ 
x_1\\
0
\end{array}
\right)+ 
\left(
\begin{array}{c}
0\\ 
0\\
 {V}( r,t)
\end{array}
\right) \h{20pt}\text{and}\h{20pt} 
\psi = e^{\h{1pt} m \h{0.5pt} \theta  \h{0.5pt} \rR} \h{1pt}  \varphi( r ,t) .
\end{equation} 
Here $ {W}$, $ {V}$ are two real-valued functions  and $\varphi$ is an $\mathbb{S}^2$-valued vector field. They all depend on the variables $t$ and $r$ only.  By this ansatz, the incompressibility condition (1.4) is automatically satisfied. Meanwhile the system \eqref{eq-main} is  reduced to  
   \begin{equation}\label{eq-main-phi-W-V}
 \left\{
 \begin{aligned}
     & \p_t\h{0.5pt} \varphi   + \left( \frac{m  {W}}{r^2} + \mu \h{0.5pt} {V} \right) \rR \varphi \h{2pt} = \h{2pt} \Delta_2\h{0.5pt} \varphi  + \big\vert \h{1.5pt} \p_{r}\h{0.5pt}\varphi\h{0.5pt}\big\vert^2 \varphi    + \left( \frac{m^2}{r^2} + \mu^2 \right) \bigg[\rR^2   \varphi+ \big\vert \h{1.5pt} \rR  \varphi \h{0.5pt}\big\vert^2  \varphi\bigg] \quad&\mbox{in $\R^2\times(0,\infty)$};\\[3pt]
     & \p_t  {W}  = \p_{rr}  {W}  - \frac{1}{r} \p_{r}  {W}  -  {m} \h{0.5pt}\Big\langle \Delta_2 \varphi  , \, \rR \h{0.5pt} \varphi\Big\rangle  &\mbox{in $\R^2\times(0,\infty)$};\\[3pt]
     & \p_t    {V}  = \Delta_2  {V}     - \mu  \h{0.5pt}\Big\langle \Delta_2  \varphi , \, \rR \h{0.5pt}  \varphi\Big\rangle \quad&\mbox{in $\R^2\times(0,\infty)$}.
    \end{aligned}\right.
   \end{equation}
If  $\left(\h{0.5pt} W, V, \varphi \h{0.5pt}\right)$  is  a global solution of    \eqref{eq-main-phi-W-V},   then  it provides      
  a global  solution $\left( u, \phi\right)$ to \eqref{eq-main-EL} by the change of variables in    \eqref{eq-form-sol} and \eqref{eq-velocity-form}.

 Before stating the main theorem, we  introduce   some  notations. The map  $\mathbf{ h}$   in \eqref{equiv-harmonic}   generates  a 2-parameter family of $m$-equivariant harmonic maps in 2D: 
  $$\displaystyle \Big\{ \h{1pt} e^{m\theta\rR} \h{0.5pt}\mathbf{ h}^{\al,\sigma}(r)\,:\,    \al\in \mathbb{R}\h{5pt} \text{and}\, \h{5pt} \sigma>0\h{1pt}\Big\}.$$  Here we simply call a map  harmonic if it is a harmonic map from $\mathbb{R}^2$ to $\mathbb{S}^2$. For any $\alpha \in \mathbb{R}$ and $\sigma \in \mathbb{R}^+$, $\mathbf{h}^{\alpha, \sigma}$ in the set given above is defined by $$  \mathbf{ h}^{\al,\sigma}(r):= e^{ \al \rR} \h{1pt}\mathbf{h}\left(\frac{r}{\sigma}\right), \h{30pt}\forall\h{2pt}r > 0.$$  It  satisfies the boundary conditions   $\mathbf{ h^{\al,\,\sigma}}(0)=-\mathbf{ e_3}$ and  $\mathbf{ h^{\al,\,\sigma}}(\infty)=\mathbf{ e_3}$. Moreover $\mathbf{h}^{\alpha, \sigma}$ attains the minimal Dirichlet  energy $4\pi |m|$  in the class $\Sigma_m$ of all $m$-equivariant maps.  
 Here $\Sigma_m$ is given as follows:
    \begin{equation*}
\Sigma_m:=\Big\{\, \psi\,:\, \R^2\to \bS^2  \,\left\vert \,\psi=e^{m\theta \rR} \h{1pt} \varphi(r),\  \left\| \h{1pt}\D \psi\h{1pt}\right\|_{\rL^2(\R^2)}<\infty, \right.\,\varphi(0)=-\mathbf{ e_3},\, \varphi(\infty)=\mathbf{ e_3} \,\Big\}.
\end{equation*}
Associated with an $m$-equivariant map $\psi =e^{m\theta \rR} \h{1pt} \varphi( r)\in \Sigma_m$, we define 
 \[
q\,=\,  {q}\h{1pt} [\h{1pt}\varphi\h{1pt}]:= \p_{r} \varphi-\frac{{|m|}}{r}\, \varphi\times \rR \varphi.
 \]
 The tangent vector   $ {q}= {q} \h{0.5pt}[\h{0.5pt}\varphi \h{0.5pt}]\in T_{\varphi}\bS^2$ provides us with the information on the harmonicity of $\psi = e^{m \theta \mathrm{R}} \h{1pt}\varphi$. In fact if $q\h{0.5pt}[\h{0.5pt} \varphi \h{0.5pt}] = 0$, then the vector field $\psi$ is a harmonic map in 2D. More properties on the tangent vector field $q \h{0.5pt}[\h{0.5pt} \varphi \h{0.5pt}]$ can be found in  \cite{GKT}. We also study in this article the fluid with possibly nonzero circulation Reynolds number. Therefore we should introduce the Oseen part of the variable $W$ (see \eqref{eq-velocity-form}). In fact we denote  by $ {W}^{os}$  the Oseen part of $ {W}$, which is explicitly  given by 
 \begin{equation}\label{eq-def-Oseen}
   {W}^{os}( r,t) = \omega \left( 1 -  e^{ - r^2 \big/ l(t)} \right) \h{10pt}\text{with  \h{2pt}$l(t) :=  4 \h{0.5pt} t + r_0^2$.} 
\end{equation}
Here $r_0 > 0$ is the initial core radius of the Oseen vortex.  $\omega \in \mathbb{R}$ is the circulation Reynolds number. When $t = 0$, we simply denote by $W_{in}^{os}$ the function $W^{os}(\cdot, 0)$. In the following statements $A\, \lesssim \,B$ means that there is a   constant $c>0$ so that $A \leq c B$. 
Here  $c$ depends possibly on the structural parameters  $m,$  $ \mu , $  $  \omega , $ and $r_0$. Throughout the article for any $ p \in [1, \infty]$, we use $\rL^p$ to denote the functional spaces $\mathrm{L}^p\left(r\h{0.5pt}\mathrm d \h{0.5pt} r\right)$. Usually if  integrand in an integration is a function of variable $r$, then we suppress the notation   $r\h{0.5pt}\mathrm d \h{0.5pt} r$ from the  integration and simply employ  the following agreement:
$$ \int_0^\infty f \,= \,\int_0^\infty  f\h{2pt}r\h{0.5pt}\mathrm d \h{0.5pt} r.$$ 
We also need an $X$-space, which  is   defined by
\begin{equation}\label{def-space-X}
X :=\Big\{ \h{1pt}z : [\h{0.5pt}0,\infty)\to \C\h{4pt}\Big\vert\h{4pt}\|\h{1pt}z\h{1pt}\|_X < \infty \h{1pt}\Big\}.
\end{equation}  Here $\| \cdot \|_X$ is the following norm for functions in the space $X$: \begin{eqnarray}\label{Xnorm}\left\|\h{0.5pt} z\h{0.5pt} \right\|_X^2:= \int_0^{\infty} \left( \big|\h{1pt}\p_\rho z\h{1pt}\big|^2 + \frac{|\h{1pt}z\h{1pt}|^2}{\rho^2} \right) \h{0.5pt} \rho \h{1pt} \mathrm d\h{1pt}\rho. \end{eqnarray}Now we give our main results in this article.

  \begin{thm}\label{thm-large-time-behavior} 
  Suppose that $m $ is an integer satisfying  $|m| \geq3$.  $\mu > 0 $ is a given twist rate. Suppose that the initial velocity field of fluid satisfies  $V_{in} \in \mathrm{L}^2$ and \begin{eqnarray*}W_{in} =W^{os}_{in}+W_{in}^*  \h{20pt}\text{with}\h{10pt} \frac{W_{in}^*}{r} \in \mathrm{L}^2. \end{eqnarray*} Given two constants $\Theta_{in}\in\R$, $\sigma_{in}>0$ and two single-variable functions $z_{j, in}$ ($j = 1, 2 $) on $\mathbb{R}^+$ with  \begin{eqnarray}\label{eq-in-orthogonal-z-h_1}
  \begin{aligned}
	\|\h{1pt}z_{j, in} \h{1pt}\|_{\mathrm{L}^{\infty}} < 1/2 \h{20pt}\text{and}\h{20pt}\int_0^\infty \h{1.5pt} z_{j, in}     \h{1pt}  \mathbf h_1   =0 ,
    \end{aligned}
    \end{eqnarray} we assume  the following representation  for the $\mathbb{S}^2$-valued vector field $\varphi_{in}$:  \begin{eqnarray}\label{eq-varphi-decomposed-initial}
  \begin{aligned}
    \varphi_{in}(r)&= {e^{\Theta_{in}\h{1pt} \rR}} \Big\{\, \mathbf{ h} (\rho)+ \gamma_{in}   (\rho) \h{1pt} \mathbf{ h}(\rho)+ z_{1, in}(\rho) \h{1pt}\mathbf{e}_2+ z_{2, in}(\rho) \h{1pt} \mathbf{ h}(\rho)\times \mathbf{e}_2\,\Big\}, \qquad\hbox{where}\quad \rho=\frac{ r}{\sigma_{in}}.\,\\[2pt]
   \end{aligned}
    \end{eqnarray}
Here the vector field $\mathbf{h}$ is defined in \eqref{equiv-harmonic}. $\mathbf{e}_2$ denotes the unit vector $(0,1,0)^t$. The function $\gamma_{in}(\cdot) $ is   given by  \begin{eqnarray*} \gamma_{in} = \Big( 1 - | \h{0.5pt}z_{in} \h{0.5pt}|^2 \Big)^{1/2} - 1 \h{20pt}\text{with $z_{in} = z_{1, in} + i \h{0.5pt}z_{2, in}$.}
\end{eqnarray*}
For any given $\varepsilon \in(0,1),$  there exists a  positive constant  $\delta_*$ (depending on $m,$ $\mu,$ $\omega$, $ r_0$ and $ \varepsilon$) such that if  
  \begin{equation}\label{eqq-assump-small-initial}
  \begin{aligned}
  \big\|\h{1pt}  z _{in}\h{1pt} \big\|_{X }+\|\h{1pt} V_{in}\h{1pt} \|_{\rL^2}+  \left\|\h{1pt} \frac{W^*_{in}}{r}\h{1pt} \right\|_{\rL^2 } + \sigma_{in} + \left(     \int_0^{\infty} |\h{1pt}z_{in}(\rho) \h{1pt}|^2 \h{1pt}\rho\h{1.5pt}\mathrm{d}\h{0.5pt}\rho \right)^{1/2}\,\, <\,\, \delta_*,
     \end{aligned}
    \end{equation}
then the followings hold  for the equation \eqref{eq-main-phi-W-V}:\\
\\
(i). Supplied with the initial data $\big(W_{in}, V_{in}, \varphi_{in}\big)$, \eqref{eq-main-phi-W-V} admits a classical solution, denoted by $\big(W, V, \varphi\big)$, on the time interval $[\h{0.5pt}0, \infty)$. For some  $\mathrm{C}^1$-regular time dependent parameter functions  $\big(\sigma(t)\h{0.5pt},\h{0.5pt} \Theta(t)\big) $,       the vector field $\varphi$     can be expressed    as 
  \begin{eqnarray}\label{eq-decom-varphi}
  \varphi(r, t) =e^{\Theta(t) \,\rR}\Big\{\,\mathbf{ h}(\rho)+ \gamma   (\rho,t) \h{1pt}\mathbf{ h}(\rho)+ z_1(\rho,t) \h{1pt}\mathbf{e}_2 + z_2( \rho,t) \h{1pt}\mathbf{ h}(\rho)\times \mathbf{ e}_2 \,\Big\}, \quad\hbox{where} \h{5pt} \rho=\frac{ r}{\sigma(t)}\,. 
    \end{eqnarray}
For a fixed $t > 0$, $z_1(\rho,t)$ and $z_2(\rho,t)$ in the above expression are two functions in the space $X \cap \mathrm{L}^2(\rho\h{1pt}\mathrm{d} \h{0.5pt}\rho)$.  $\gamma$ is a function given in \eqref{eq-def-gamma}.  Let $z$ be the complexified function $z_1 + i \h{1pt}z_2$.  Then for all $t > 0$, $z(\cdot, t)$ satisfies the  orthogonal condition:  \begin{eqnarray}\label{eq-orthogonal-z-h_1}
  \begin{aligned}
	 \int_0^\infty \h{1.5pt} z(\rho, t)     \h{1pt}  \mathbf h_1(\rho)  \h{1.5pt} \rho \h{1pt} \mathrm d\h{0.5pt}\rho  =0.
    \end{aligned}
    \end{eqnarray}   Moreover the  following estimate holds for the $X$-norm of $z$: \begin{eqnarray}\label{eq-int-L^2-q}
 \int_0^\infty \exp \left\{\frac{2\mu^2}{m^2}\h{0.5pt}s \right\} \h{1pt} \| \h{1pt}z(\cdot, s)\h{1pt}\|_{X}^2 \h{1pt} \mathrm d\h{0.5pt}s\,\, \lesssim\,\, 1;
\end{eqnarray}
(ii). The functions $W$ and $V$ can be decomposed into \begin{eqnarray*} W = W^{os} + W_1^* + W_2^* \h{20pt}\text{and}\h{20pt}V = V_1 + V_2,
\end{eqnarray*}respectively. Moreover for all $t > 0$,  $V_1$, $V_2$, $W_1^*$, $W_2^*$  satisfy the following time decay estimates: \begin{eqnarray}\label{eq-est-time-decay}
\begin{aligned} 
\left\| \h{1pt} V_{1}  \h{1pt}\right\|_{\mathrm{L}^{\infty}}^2 + \Big\| \h{1pt} W^*_{1}\big/ r^2 \h{1pt}\Big\|_{\mathrm{L}^{\infty}}    \h{2pt}\lesssim\h{2pt} t^{- 1} , \h{20pt}
 \left\| \h{1pt} V_2 \h{1pt}\right\|_{\mathrm{L}^{2}}+  \left\| \h{1pt}   W^*_{2}\big/{ r} \h{1pt}\right\|_{\mathrm{L}^{2}}  \h{2pt}
 \lesssim  \,(1+t)^{-1};
\end{aligned}
\end{eqnarray}
(iii). The scaling function $\sigma(\cdot)$ decays exponentially. More precisely it holds 
 \begin{equation}\label{eq-sigma-exponential}
\left(1-\varepsilon\right) \h{1pt}e^{ - \frac{\mu^2}{m^2}\h{1pt}t } \,\sigma_{in} \h{2pt}\leq \h{2pt}\sigma\h{2pt} \leq\h{2pt}  \left(1+\varepsilon\right)  \h{1pt}   e^{ - \frac{\mu^2}{m^2}\h{1pt}t } \,\sigma_{in}   \qquad\hbox{for all $t\geq0$}.
\end{equation}  It is this estimate that gives us the blow-up of $\varphi$ at $t = \infty$.
  \end{thm}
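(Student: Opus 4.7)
The plan is a modulation plus weighted-energy scheme in the spirit of the equivariant harmonic map heat flow analysis of \cite{GKT} and the two dimensional works \cite{CY1,CY2}. Work in the orthonormal frame $\{\mathbf{h}(\rho),\mathbf{e}_2,\mathbf{h}(\rho)\times\mathbf{e}_2\}$ adapted to the moduli family $\{\mathbf{h}^{\alpha,\sigma}\}$; the sphere constraint fixes $\gamma=(1-|z|^2)^{1/2}-1$, so $z=z_1+iz_2$ is the only genuine unknown in \eqref{eq-decom-varphi}. Substituting into the $\varphi$ equation of \eqref{eq-main-phi-W-V} and expanding around $\mathbf{h}$ in the self-similar variable $\rho=r/\sigma(t)$ yields a parabolic equation of the schematic form
\begin{eqnarray*}
\sigma^{2}\partial_{t}z + L\,z \,=\, F\bigl[\sigma,\Theta,W,V,z\bigr],
\end{eqnarray*}
where $L$ is the standard linearization of the harmonic map heat flow at $\mathbf{h}$. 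Its kernel is two dimensional: the scaling mode $\partial_\sigma\mathbf{h}^{0,\sigma}$ is proportional to $\mathbf{h}_1$ along the $\mathbf{h}\times\mathbf{e}_2$ direction and the rotation mode $\partial_\alpha\mathbf{h}^{\alpha,\sigma}=\rR\mathbf{h}$ is proportional to $\mathbf{h}_1$ along the $\mathbf{e}_2$ direction. The single complex orthogonality \eqref{eq-orthogonal-z-h_1} is imposed to kill both zero modes at once and, in turn, dictates the modulation laws for $(\sigma,\Theta)$.

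Second, I would derive the modulation ODEs by differentiating \eqref{eq-orthogonal-z-h_1} in $t$ and projecting $F$ onto $\mathbf{h}_1$. At the profile $\mathbf{h}$ the forcing $\mu^{2}[\rR^{2}\varphi+|\rR\varphi|^{2}\varphi]$ equals $\mu^{2}\mathbf{h}_1\mathbf{h}_3(\mathbf{h}\times\mathbf{e}_2)$, and the gradient-flow computation using the conformal identity $|\mathbf{h}'|^{2}=m^{2}\mathbf{h}_1^{2}/\rho^{2}$ and $\|\rho\mathbf{h}'\|_{\rL^{2}(\rho\,\mathrm d\rho)}^{2}=m^{2}\|\mathbf{h}_1\|_{\rL^{2}(\rho\,\mathrm d\rho)}^{2}$ produces
\begin{eqnarray*}
\frac{\dot\sigma}{\sigma} \,=\, -\,\frac{\mu^{2}}{m^{2}} \,+\, (\text{quadratic in $z$}) \,+\, (\text{coupling with $W,V$}),
\end{eqnarray*}
together with a companion ODE for $\Theta$ driven by the Oseen rotation through $W^{os}$. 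Local existence of $(\sigma,\Theta,z,W,V)$ then follows from a standard parabolic fixed-point argument in $X\cap\rL^{2}(\rho\,\mathrm d\rho)$ for $z$ and $\rL^{2}$-based spaces for $(W^{*}/r,V)$, exploiting the smallness \eqref{eqq-assump-small-initial}.

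The heart of the argument is a coupled a priori estimate. The Hardy-type coercivity of $L$ on the orthogonal complement of $\mathbf{h}_1$ (see \cite{GKT}) gives
\begin{eqnarray*}
\frac{d}{dt}\,\|z\|_{\rL^{2}(\rho\,\mathrm d\rho)}^{2} + c\,\|z\|_X^{2} \,\lesssim\, \|z\|_X^{2}\bigl(\|z\|_X+\|V\|_{\rL^{\infty}}+\|W^{*}/r^{2}\|_{\rL^{\infty}}\bigr) + \mu^{2}\,\|z\|_{\rL^{2}(\rho\,\mathrm d\rho)}^{2}.
\end{eqnarray*}
In parallel, for the fluid one splits $W=W^{os}+W^{*}_{1}+W^{*}_{2}$ and $V=V_{1}+V_{2}$, with the index-$1$ part carrying the initial data through the two dimensional heat semigroup and the index-$2$ part produced by the source $\langle\Delta_{2}\varphi,\rR\varphi\rangle$; the divergence-type structure of this source combined with smallness of $z$ yields \eqref{eq-est-time-decay} through heat-kernel bounds. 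Multiplying the $z$-inequality by the weight $\exp(2\mu^{2}t/m^{2})$ and absorbing the right-hand side via \eqref{eq-est-time-decay} produces \eqref{eq-int-L^2-q}.

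Finally, I would close the argument by a continuity/bootstrap on $[0,T)$ and pass to $T=\infty$. Integrating the modulation ODE for $\sigma$, the residuals on the right-hand side are $\rL^{1}_{t}$-integrable by \eqref{eq-int-L^2-q} and \eqref{eq-est-time-decay}, so
\begin{eqnarray*}
\log\frac{\sigma(t)}{\sigma_{in}} \,=\, -\,\frac{\mu^{2}}{m^{2}}\,t + O(\delta_{*}),
\end{eqnarray*}
which yields \eqref{eq-sigma-exponential} once $\delta_{*}=\delta_{*}(\varepsilon)$ is chosen sufficiently small. The main obstacle, in my view, will be to run the Hardy coercivity of $L$ against the coupling terms whose slow $(1+t)^{-1}$ decay matches the borderline two dimensional weight; the weighted-in-time energy identity in the precise weight $e^{2\mu^{2}t/m^{2}}$ is what permits one to retain the exponential gain for $\sigma$ while simultaneously keeping all quadratic and coupling error terms integrable in $t$.
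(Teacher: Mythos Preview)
Your overall scheme---modulation via the orthogonality \eqref{eq-orthogonal-z-h_1}, coercivity of the linearized operator on $(\mathbf h_1)^{\perp}$, the splitting $V=V_1+V_2$, $W^*=W_1^*+W_2^*$, an exponentially weighted energy estimate for $z$, and a bootstrap---is exactly the paper's approach. But the displayed $z$-inequality, as written, does not close, and this is where the key idea is missing.

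You place $+\mu^{2}\|z\|_{\rL^{2}(\rho\,\mathrm d\rho)}^{2}$ on the right-hand side and then multiply by $e^{2\mu^{2}t/m^{2}}$. That weight only adds a further $+\frac{2\mu^2}{m^2}e^{2\mu^2 t/m^2}\|z\|_{\rL^2}^2$ to the right when you pass the factor through $\frac{d}{dt}$; nothing in \eqref{eq-est-time-decay} controls $\|z\|_{\rL^2(\rho\,\mathrm d\rho)}$, so the estimate blows up. What the paper does instead (Lemma~3.11) is work with $\sigma^{2}\int|z|^{2}\rho\,\mathrm d\rho$, i.e.\ the $\rL^2$ norm in the \emph{physical} variable $r$. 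After multiplying the $z$-equation by $\sigma^2$, the self-similar drift $\frac{\sigma'}{\sigma}\rho\partial_\rho z$ from $\mathrm{Mod}$ combines with the $-\mu^2\mathbf h_3^2 z$ forcing so that $+\mu^{2}\sigma^{2}\int|z|^{2}\rho\,\mathrm d\rho$ appears on the \emph{left} as genuine dissipation, and every term on the right carries an overall factor $\sigma^{2}$. Under the bootstrap hypothesis $(\mathrm A.1)$ on $\sigma$, this $\sigma^{2}\sim e^{-2\mu^2 t/m^2}$ is precisely what makes the right-hand side integrable against the weight $e^{2\mu^2 t/m^2}$ and yields \eqref{eq-int-L^2-q}.

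A second, smaller point: ``heat-kernel bounds'' alone do not produce the $(1+t)^{-1}$ decay for $\|V_2\|_{\rL^2}$ and $\|W_2^*/r\|_{\rL^2}$ in \eqref{eq-est-time-decay}, because the source $\langle q,iv\rangle$ is only time-integrable, not decaying. The paper uses Schonbek's Fourier splitting (Lemmas~3.5--3.6 and 3.8--3.9): the divergence structure gives $|\mathscr F_2[V_2](\xi)|\lesssim|\xi|\int_0^t\|q\|_{\rL^2}\|v\|_{\rL^2}\,\mathrm ds$, and choosing the splitting radius $\mathrm R_*^2\sim(1+t)^{-1}$ in the energy inequality extracts the algebraic rate needed to close the bootstrap on $(\mathrm A.2)$.
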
\
 \\
\textbf{I.3. SOME REMARKS ON THEOREM 1.1}\\
 \\
 We would like to point out three remarks on Theorem 1.1. \\
 \\
 \textit{I. Motivation for the initial vector field $\varphi_{in}$ in \eqref{eq-varphi-decomposed-initial}. }
 \\
 \\ 
 If we decouple the fluid part from the system \eqref{eq-main-phi-W-V}, then the first equation in \eqref{eq-main-phi-W-V} gives us the following heat flow of harmonic maps: \begin{eqnarray}\label{har-3d} 
 \p_t\h{0.5pt} \varphi    \h{2pt} = \h{2pt} \Delta_2\h{0.5pt} \varphi  + \big\vert \h{1.5pt} \p_{r}\h{0.5pt}\varphi\h{0.5pt}\big\vert^2 \varphi    + \left( \frac{m^2}{r^2} + \mu^2 \right) \bigg[\rR^2   \varphi+ \big\vert \h{1.5pt} \rR  \varphi \h{0.5pt}\big\vert^2  \varphi\bigg], \qquad\mbox{in $\R^2\times(0,\infty)$}.
\end{eqnarray}In \eqref{eq-varphi-decomposed-initial} we choose $\varphi_{in}$ to be a small perturbation of the harmonic map $\mathbf{h}^{\Theta_{in}, \h{0.5pt}\sigma_{in}}$ in some norm space. Obviously $\mathbf{h}^{\Theta_{in}, \h{0.5pt}\sigma_{in}}$ is not a stationary solution to the equation \eqref{har-3d} if $\mu \neq 0$. However by applying the following change of variables: \begin{eqnarray}\label{chan-var} \varphi(r,t)\,=\,  \Phi(y,s), \qquad\hbox{where}  \quad y\h{0.5pt} =\h{0.5pt} \frac{r}{\lambda(t)},\quad s(t) \h{0.5pt} =\h{0.5pt} \int_0^t  \lambda^{-2}(\tau)\h{1pt}\mathrm{d} \h{0.5pt}\tau \quad\text{and}\quad \lambda(t) = \mu^{-1} e^{ - \mu^2 t \big/ m^2} , \end{eqnarray}the equation \eqref{har-3d}  can be rewritten as 
\begin{equation}\label{eq-Phi}
 \p_s\h{0.5pt} \Phi   + \frac{ y \h{1pt}\p_y\Phi}{2s + m^2} \h{2pt} = \h{2pt} \Delta_2 \Phi   + \big\vert \h{1.5pt} \p_{y}\h{0.5pt}\Phi\h{0.5pt}\big\vert^2 \Phi    + \left( \frac{m^2}{y^2} + \frac{m^2}{2s + m^2} \right) \bigg[\rR^2   \Phi+ \big\vert \h{1.5pt} \rR  \Phi \h{0.5pt}\big\vert^2  \Phi\bigg]  .
\end{equation}Formally if we take $s\to\infty$, then  a global solution $\Phi$ of the above equation should asymptotically approach to a solution of the equation 
\begin{equation}\label{eq-limit-eq}
   \Delta_2 \Phi   + \big\vert \h{1.5pt} \p_{y}\h{0.5pt}\Phi\h{0.5pt}\big\vert^2 \Phi    +  \frac{m^2}{y^2}   \bigg[\rR^2   \Phi+ \big\vert \h{1.5pt} \rR  \Phi \h{0.5pt}\big\vert^2  \Phi\bigg] =0.
\end{equation}This observation motivates us the choice of $\varphi_{in}$ in \eqref{eq-varphi-decomposed-initial}.\\
\\
\textit{II. Results on the pure harmonic map heat flow.}\\
\\
With slight modifications to the proof of Theorem 1.1, we have the following results for \eqref{har-3d}, the heat flow of harmonic map: \begin{cor} 
 \label{cor-main-result}
  Let $m $ be an integer satisfying  $|m| \geq3$. $\mu > 0$ is a twist rate. Suppose that  \eqref{eq-in-orthogonal-z-h_1}-\eqref{eq-varphi-decomposed-initial} hold for the initial vector field $\varphi_{in}$. 
For any $\varepsilon \in(0,1),$  there exists a  positive constant  $\delta_*$ (depending on $m,$ $   \mu $ and $ \varepsilon$) such that if   \begin{equation*} 
      \begin{aligned}
  \big\|\h{1pt}  {z} _{in}\h{1pt} \big\|_{X} + \sigma_{in} + \left(     \int_0^{\infty} \big|\h{1pt}z_{in}(\rho) \h{1pt}\big|^2 \h{1pt}\rho\h{1.5pt}\mathrm{d}\h{0.5pt}\rho \right)^{1/2}\,\, <\,\, \delta_*,
     \end{aligned}
    \end{equation*}
then the followings hold  for  the equation  \eqref{har-3d}: \\
\\
(i). Supplied with the initial data $  \varphi_{in} $, \eqref{har-3d} admits a classical solution, denoted by $ \varphi $, on the interval $[\h{0.5pt}0, \infty)$. For some  $\mathrm{C}^1$-regular time dependent parameter functions  $\big(\sigma(t)\h{0.5pt},\h{0.5pt} \Theta(t)\big) $,       the vector field $\varphi$     can be expressed  in terms of \eqref{eq-decom-varphi}. Moreover the functions $z_1$ and $z_2$ in \eqref{eq-decom-varphi} satisfies \eqref{eq-orthogonal-z-h_1}-\eqref{eq-int-L^2-q};\\
\\
(ii).\h{0.5pt}The parameter function $\sigma(\cdot)$ decays to $0$ as $t \rightarrow \infty$. The optimal decay rate is given in  \eqref{eq-sigma-exponential}. Furthermore there exists a constant $\Theta_\infty\in\R$ such that $\Theta(t)\to \Theta_\infty$ as $t\to \infty$. 
\end{cor}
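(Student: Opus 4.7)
The strategy is to specialize the proof of Theorem~\ref{thm-large-time-behavior} to the fluid-free setting. Since \eqref{har-3d} is literally the first equation of \eqref{eq-main-phi-W-V} with $W\equiv 0$ and $V\equiv 0$, the argument proceeds via the same modulation framework but without the Navier--Stokes estimates or the Oseen contribution. Apply the self-similar rescaling \eqref{chan-var}, and impose the modulation ansatz
\[
\varphi(r,t)=e^{\Theta(t)\rR}\Big\{\mathbf{h}(\rho)+\gamma(\rho,t)\,\mathbf{h}(\rho)+z_1(\rho,t)\,\mathbf{e}_2+z_2(\rho,t)\,\mathbf{h}(\rho)\times\mathbf{e}_2\Big\},\qquad \rho=r/\sigma(t),
\]
with $\gamma=(1-|z|^2)^{1/2}-1$ enforcing $|\varphi|=1$. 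The two scalar freedoms $\sigma(t)$ and $\Theta(t)$ are fixed by requiring the orthogonality \eqref{eq-orthogonal-z-h_1}, which projects $z=z_1+i z_2$ off the two neutral directions $\mathbf{h}_1\mathbf{e}_2$ and $\mathbf{h}_1\,\mathbf{h}\times\mathbf{e}_2$ generated by the scaling/rotation symmetries of the limit equation \eqref{eq-limit-eq}. An implicit function argument at each time provides $C^1$ regularity of $(\sigma(t),\Theta(t))$ as long as $\|z\|_X$ stays small.

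Substituting the ansatz into \eqref{har-3d} yields, schematically, an evolution equation $\partial_t z+\mathcal{L}_\mu z=N(z,\sigma,\Theta)$ for the complexified perturbation $z$, where $\mathcal{L}_\mu$ is the linearization of the right-hand side of \eqref{har-3d} about $\mathbf{h}^{\Theta,\sigma}$, together with two modulation ODEs obtained by projecting onto the $\mathbf{h}_1$-weighted neutral directions. Local existence is standard. The global estimates \eqref{eq-int-L^2-q} and \eqref{eq-sigma-exponential} then come from a single bootstrap: assuming $\|z(\cdot,t)\|_X$ and $\sigma(t)$ remain small on $[0,T]$, one derives a weighted energy identity of the form
\[
\frac{d}{ds}\|z\|_{\rL^2}^2+c\,\|z\|_X^2\,\lesssim\,\sigma^2\|z\|_X^2+\text{(higher-order in $z$)},
\]
where $s$ is the rescaled time from \eqref{chan-var}. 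Integrating against the weight $\exp(2\mu^2 s/m^2)$ delivers \eqref{eq-int-L^2-q}; the modulation equation for $\sigma$ reads $\dot\sigma/\sigma=-\mu^2/m^2+O(\|z\|_X^2)$, from which \eqref{eq-sigma-exponential} is immediate after choosing $\delta_*$ small enough. The companion equation satisfies $|\dot\Theta|\lesssim\|z\|_X^2$, which is integrable on $[0,\infty)$ by \eqref{eq-int-L^2-q}, so $\Theta(t)\to\Theta_\infty$ for some $\Theta_\infty\in\mathbb{R}$.

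The crucial ingredient is the coercivity $\langle\mathcal{L}_\mu z,z\rangle\gtrsim\|z\|_X^2$ under the orthogonality \eqref{eq-orthogonal-z-h_1}. This is exactly where the hypothesis $|m|\geq 3$ enters: the linearization about the equivariant harmonic map $\mathbf{h}$ has a spectral gap on the $\mathbf{h}_1$-orthogonal complement precisely in this range of $m$ (cf.\ \cite{GKT}). The main obstacle is therefore not the derivation of the modulation equations themselves, but propagating the $X$-norm estimate through the nonlinearities and the $\mu^2$-terms: the $1/\rho^2$ weight in \eqref{Xnorm} forces genuine use of this spectral gap rather than a soft interpolation. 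Once the coercivity is in hand, every other step reduces to Gronwall applied in the rescaled time variable, and Corollary~\ref{cor-main-result} follows by a direct transcription of the $\varphi$-part of the proof of Theorem~\ref{thm-large-time-behavior}.
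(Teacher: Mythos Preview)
Your strategy matches the paper's: specialize the proof of Theorem~\ref{thm-large-time-behavior} to $W\equiv 0$, $V\equiv 0$, $\omega=0$, then verify separately that $\Theta(t)$ converges. One point needs correction, however. The modulation identities do not yield quadratic remainders: projecting \eqref{eq-z-Mod-HT} onto $\mathbf{h}_1$ in the fluid-free case gives
\[
\big|\Theta'\big|\;\lesssim\;\|z\|_X+\|z\|_X\left|\frac{\sigma'}{\sigma}+\frac{\mu^2}{m^2}\right|+\sigma^{-2}\|z\|_X^2,
\qquad
\left|\frac{\sigma'}{\sigma}+\frac{\mu^2}{m^2}\right|\;\lesssim\;\|z\|_X+\sigma^{-2}\|z\|_X^2,
\]
with a linear term $\|z\|_X$ coming from the $\mu^2$-contribution in $\mathrm{Mod}$ (the term $\mu^2\mathbf{h}_1\mathbf{h}_3^2 z$ pairs nontrivially with $\mathbf{h}_1$). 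Your claimed bounds $|\dot\Theta|\lesssim\|z\|_X^2$ and $\dot\sigma/\sigma=-\mu^2/m^2+O(\|z\|_X^2)$ are therefore too strong as stated.

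This does not break the argument. From \eqref{eq-int-L^2-q} and Cauchy--Schwarz against $e^{-\mu^2 t/m^2}$ one gets $\int_0^\infty\|z\|_X\,dt<\infty$, and \eqref{eq-int-L^2-q} combined with the exponential decay of $\sigma$ gives $\int_0^\infty\sigma^{-2}\|z\|_X^2\,dt<\infty$. Hence $\int_0^\infty|\Theta'|\,dt<\infty$ and $\Theta(t)\to\Theta_\infty$ as claimed. The paper proceeds exactly this way; you should just replace the quadratic error claims by the linear ones and invoke this extra integrability step.
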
\noindent For the simplified Ericksen-Leslie equation, we only have algebraic decay for   $V_1$, $V_2$, $W_1^*$ and $W_2^*$ in \eqref{eq-est-time-decay}. It is not enough to show the $\mathrm{L}^1$-integrability of $\Theta'$ on $\mathbb{R}^+$. But for the pure heat flow of harmonic maps, we do have $\mathrm{L}^1$-integrability of $\Theta'$ on $\mathbb{R}^+$, which gives us the convergence of $\Theta(\cdot)$ in (ii) of Corollary 1.2. \\
\\
\textit{III. Comparison with some known results.} 
\\
\\
Given $m$ an integer satisfying $|m|\geq3$ and $\mu > 0$ a twist rate, our  global  solution $\big( W, V, \varphi \big)$ obtained by Theorem 1.1 gives a global solution $(u, \phi)$ to \eqref{eq-main-EL} through the change of variables in   \eqref{eq-form-sol} and \eqref{eq-velocity-form}.  $u$ is homogeneous in terms of the variable $x_3$, while $\phi$    is twisted and periodic  along the $x_3$-axis with period $2\pi \big/ \mu $. Moreover the director field  $\phi$ blows up with an exponential rate at all points on the $x_3$-axis,  as $t$ tends to $\infty$. By the representation of $\varphi$ in \eqref{eq-decom-varphi},  except at $r = 0$ where $\varphi = - \mathbf{e}_3$, for all $r > 0$, $\varphi(r, t) $ converges to $\mathbf{e}_3$ as $t \rightarrow \infty$. In other words our solution $\phi$ escapes into third dimension for large time $t$.  In \cite{CY2} the authors show that for $\mu = 0$, $V \equiv 0$ and $| \h{0.5pt}\omega \h{0.5pt} |$  suitably small, where $\omega $ is the circulation Reynolds number, the vector field $\varphi$ should keep rotating around the $x_3$-axis as $t \rightarrow \infty$. However our results in Theorem 1.1 indicate that for $\mu > 0$, the associated vector field $\varphi$ should escape into third dimension exponentially. Compared with the blow-up of $\varphi$ in Theorem 1.1, the oscillating effect from the  swirling velocity field $u$ can be ignored. Even when $\mu > 0$ is small, the system \eqref{eq-main-phi-W-V} should not be regarded as a perturbed system of the one with $\mu = 0$ (the case studied in \cite{CY2}). Interesting reads should also refer to \cite{GGT,GNT}, where global solutions for harmonic map heat flow are constructed. The global solutions in \cite{GGT,GNT} are $m$-equivariant with $|\h{0.5pt}m \h{0.5pt}| \geq 3$ and do not blow up. Now we compare our current work with  \cite{AH} in which the solution obtained for the pure heat flow of harmonic maps  also blows up at $t = \infty$. But the blow-up result in \cite{AH} is due to a boundary condition on angular function of orientation variables. A suitably constructed barrier function is utilized in order to  prevent the occurrence of bubbles at finite time. In our Theorem 1.1, the decay rate for the parameter function $\sigma(\cdot)$ is given in \eqref{eq-sigma-exponential}. It is the non-zero twist rate $\mu$ that makes our solution blow up at $t = \infty$. The mechanism for our blow up in Theorem 1.1 is quite different from the work \cite{AH}. As for the other dynamical systems and some finite time blow up results, we refer readers to \cite{GKT, GNT, HLLW, RS} and references therein.\\
 \\
\textbf{I.4. ORGANIZATION OF THE ARTICLE}\\
\\
The article is organized as follows: in Sect.II, we derive equations satisfied by the tangent vector $q$ and the perturbation functions $z_1$, $z_2$ (see \eqref{eq-decom-varphi}). With these equations, in Sect.III, we discuss some fundamental energy estimates and estimates on the modulation parameters $\big(\sigma(t), \Theta(t)\big)$. The proof of Theorem 1.1 and Corollary 1.2 are given in Sect.IV with a bootstrap argument.
\  
\\
\\
\setcounter{section}{2}
\setcounter{footnote}{2}
\setcounter{equation}{0}
\noindent \textbf{II. EQUATIONS OF VARIABLES.}\\
 
In this section we derive some equations that will be used in the study  of \eqref{eq-main-phi-W-V}. 
Given   $\big(W, V, \varphi \big)$  a solution of \eqref{eq-main-phi-W-V}, the vector field $\varphi$ induces a covariant derivative \begin{eqnarray*} \mathrm{D}^{ \varphi}_r  \h{1pt}\mathbf{ e}=\mathbf{ e}_r +\left\langle\mathbf{ e},\, \varphi_{r} \right\rangle  \varphi, \h{20pt}\text{ for all   $\mathbf{e} \in T_{\varphi}\mathbb{S}^2$}. \end{eqnarray*}   Suppose that  $\mathbf{ e}$ is the unique solution of the boundary value problem:
 $$\mathrm{D}^{ \varphi}_r \h{1pt} \mathbf{e}=0,\qquad \mathbf{ e}\left\vert_{r=\infty} =\mathbf{ e_2} .
 \right.$$ Then  $\big\{\mathbf{ e},  \,\varphi\times \mathbf{ e} \big\}$ forms an orthonormal frame on  $T_\varphi\h{1pt} \bS^2$. Therefore for some coefficient  functions $q_{1}$ and $q_{ 2}$, it holds  
 \begin{equation}\label{eq-def-q}
 q\h{0.5pt}[\h{0.5pt}\varphi\h{0.5pt}] =  \p_r \varphi -\frac{m}{r}  \varphi \times \rR \varphi = q_{1} \mathbf{e} + q_{2} \, \varphi \times\mathbf{ e}. 
 \end{equation}Here and in what follows, we assume $m \geq 3$ and denote by $q$ the complexified function $q_{1} + i \h{1pt}q_{2}$. Utilizing similar arguments as in \cite{CSU,CY2,GGT}, by \eqref{eq-main-phi-W-V} and  \eqref{eq-def-q}, we have the following equations satisfied by the unknown variables   $q$, $V$ and $W$:
    \begin{eqnarray}\label{eq-main-q-W-V}
 \left\{
 \begin{aligned}
  & \p_t \h{0.5pt} q  + i \h{0.5pt} S  \h{0.5pt}q  +  i \h{0.5pt}\rL_{m }^* \left[ \left( \frac{m W }{r^2}+ \mu  V  \right)v \right] = - \rL_{m }^*  \rL_{m } q  - \mu^2 \h{1pt}\mathrm{L}_{m}^* \h{0.5pt}\big[ \h{0.5pt}\varphi_{3} \h{0.5pt}v\h{0.5pt} \big];\\[2pt]
  & \rL_{m}^* v =  \varphi_{3} \h{1pt} q; \\[2pt]
 &  \p_t W  =   \p_{rr} W - \frac{1}{r}\p_r  W + m\left(\p_r +\frac{1}{r}\right) \Big\langle q,\,i\h{0.5pt}v\Big\rangle;\\[2pt]
 &   \p_t V    = \Delta_2 V  +  \mu    \left(\p_r +\frac{1}{r}\right) \Big\langle q ,\,i \h{0.5pt}v \Big\rangle.
  \end{aligned}\right.
   \end{eqnarray}
Here   $\rL_{m}$ and its adjoint operator  $\rL_{m}^*$  on $\rL^2$  are given by 
  \begin{equation}\label{eq-Lm-Lm*}
  \rL_{m} = \p_r+\frac{1}{r}-\frac{m\varphi_{3}}{r}\quad\text{and }\quad \rL_{m}^*=-\p_r -\frac{m \varphi_{3}}{r},
  \end{equation} respectively. $v$ in (2.2) is a complex function  $v:= v_{1}+i \h{1pt} v_{2}$ with $v_{1}$ and $v_{2}$ defined by    
  \begin{equation}\label{eq-def-v}
   v_{1}=  -\big\langle\rR\varphi ,\varphi\times {\bf e}\big\rangle = \left\langle {\bf e}_3, {\bf e
  }\right\rangle\quad\mbox{and}\quad v_ {2} =  \big\langle\rR\varphi,   {\bf e}\big\rangle = \left\langle {\bf e}_3, \varphi\times {\bf e}\right\rangle,\end{equation}
   respectively. Moreover $S$ in  \eqref{eq-main-q-W-V} can be read as follows:
 \begin{equation*} 
S = \int_r^\infty \left\langle \rL_{m} q +   \mu^2  \h{1pt} v  \h{1pt} \varphi_{3} +  i \h{0.5pt} v \left(  \frac{m W}{\tau^2} + \mu  \h{0.5pt} V   \right),\, i \h{0.5pt}\Big( q +\frac{m v }{\tau}  \Big) \right\rangle \h{2pt}\mathrm{d} \tau.
 \end{equation*}
 Without ambiguity, we also use $\big<\cdot, \cdot \big>$ to denote the standard inner product on the complex field $\mathbb{C}$.

Letting $\Theta$ and $\sigma$ be two   time dependent modulation parameters, we suppose that  $\varphi$ admits  a decomposition  as shown in \eqref{eq-decom-varphi}. Here for $z = z_1 + i \h{0.5pt}z_2$ satisfying $ \| \h{0.5pt}z\h{0.5pt}\|_{\mathrm{L}^{\infty}} \leq 1/2$, $\gamma$ in \eqref{eq-decom-varphi} is given by  
\begin{equation}\label{eq-def-gamma}
\gamma=\left(1-|z|^2\right)^{1/2}-1. 
\end{equation} Moreover it holds 
\begin{eqnarray}
\label{eq-prop-gamma}
\left|\h{1pt}\gamma\h{1pt}\right|\,\lesssim\, \left|\h{1pt}z\h{1pt}\right|^2 \quad\mbox{and}\quad \left|\h{1pt}\p_\rho\gamma\h{1pt}\right| \,\lesssim\, \left|\h{1pt}z\h{1pt}\right|\cdot\left|\h{1pt}\p_\rho\h{1pt} z\h{1pt}\right|.
\end{eqnarray}
Now we plug  \eqref{eq-decom-varphi} into \eqref{eq-main-phi-W-V} and  obtain the following equation satisfied by    $z$:   
 \begin{equation} \label{eq-z-Mod-HT}
   \p_t \h{0.5pt} z  + \frac{1}{\sigma^2}\h{1pt} N  \h{0.5pt} z  = \mathrm{Mod} + \mathrm{HT}, \end{equation}
   where
   \begin{eqnarray}\label{eq-def-Mod-HT}
\begin{aligned}
 & \mathrm{Mod} := - \Big\{ \big( 1 + \gamma \big) \mathbf{h}_{1} + i\h{0.5pt}\mathbf{h}_{3}\h{0.5pt}z \Big\} \left(  \Theta'  + \mu\h{0.5pt}V + \frac{m W}{r^2} \right)  \\[4pt]
 & \h{20pt}+ \frac{\sigma'}{\sigma} \h{1pt}\Big\{ i \h{1pt}\big( 1 + \gamma \big) m \mathbf{h}_{1} + \rho\h{1pt}\p_{\rho}\h{0.5pt}z \Big\} + \mu^2 \Big\{ i \h{1pt}\big( 1 + \gamma \big) \mathbf{h}_{1}\mathbf{h}_{3} + i \mathbf{h}_{1}^2 z_{2} - \mathbf{h}_{3}^2 \h{1pt} z \Big\}; \\[10pt] 
 &\mathrm{HT} := \frac{i}{\sigma^2} \h{1pt}\frac{2m\mathbf{h}_{1}}{\rho}\h{1pt}\p_{\rho} \gamma \\[4pt]
 & \h{18pt} +  \left( \frac{m^2}{\rho^2 \sigma^2} + \mu^2\right) z  \Big\{ z_{1}^2 + \big( \gamma \mathbf{h}_{1} - z_{2} \mathbf{h}_{3} \big)^2 + 2 \mathbf{h}_{1} \big( \gamma  \mathbf{h}_{ 1} - z_{ 2} \mathbf{h}_{ 3} \big) \h{1pt}\Big\} \\[8pt]
& \h{18pt} + \frac{1}{\sigma^2} \left\{   \left( \p_{\rho} \gamma - \frac{m \mathbf{h}_{ 1}}{\rho} z_{ 2} \right)^2 + \big( \p_{\rho} \h{1pt}z_{ 1}\big)^2 + \left( \p_{\rho}\h{0.5pt}z_{  2} + \frac{m \mathbf{h}_{ 1}}{\rho} \gamma  \right)^2 + \frac{2m\mathbf{h}_{1}}{\rho} \left( \h{1pt} \p_{\rho}\h{0.5pt}z_{2} + \frac{m \mathbf{h}_{1}}{\rho} \gamma \right) \h{1pt}\right\} z.
 \end{aligned}
 \end{eqnarray}
In \eqref{eq-def-Mod-HT}, $\Theta' $ and $ \sigma'$ represent  the time derivatives of $ \Theta $ and $ \sigma$, respectively. The operator $N $ is defined by
\begin{equation}\label{eq-def-N}
-N  \,:=\,-\rL_{\mathbf{ h} }^* \h{1pt}\rL_{\mathbf{h} }
\,=\,\p_{\rho\rho}+\frac{1}{\rho}\h{0.5pt}\p_\rho+\frac{m^2}{\rho^2}\Big(2 \mathbf{h}_{1} ^2-1\Big), 
\h{5pt}\text{where}\h{4pt} \rL_{\bh}\,=\,\p_\rho+\frac{m}{\rho}\h{1pt}\mathbf{h}_{3}(\rho) .
\end{equation}
Here   $\rL_{\mathbf{ h} }^*$ is the adjoint operator of $\mathrm{L}_{\mathbf{h}}$ on  $\rL^2\left(\rho \h{1pt} \mathrm d\h{0.5pt}\rho\right)$.  

Before proceeding we give some preliminary results for later use. For the variable $z$,   we equip it with the norm in  the space $X$ (see  \eqref{Xnorm}). By Sobolev embedding,  we have 
\begin{equation}\label{eq-z-sobolev}
\mbox{$z$ is continuous on $[\h{1pt}0, \infty )$ with $z(0)=z(\infty)=0\,$ and satisfies $\displaystyle \left\|\h{0.5pt}z\h{0.5pt}\right\|_{\mathrm L^\infty}\lesssim \left\|\h{0.5pt} z\h{0.5pt} \right\|_X$ }.
\end{equation} In light of \eqref{eq-def-v} and  \eqref{eq-decom-varphi},    it follows
 \begin{equation}\label{eq-v-h_1-z}
 \left|\h{0.5pt}v\h{0.5pt}\right| ^2 \h{1pt}=\h{1pt}\left|\h{1pt}\rR \varphi\h{1pt}\right|^2\h{2pt}\lesssim\h{2pt}\mathbf  h_1^2(\rho)\h{1pt}+\h{1pt} \left|\h{1pt}z(\rho,t)\h{1pt}\right|^2, \h{20pt}\text{provided that $\|\h{1pt}z\h{1pt}\|_{\mathrm{L}^{\infty}} \leq 1/2$.}
 \end{equation}Now we consider the operator $\mathrm{L}_{\mathbf{h}}$ in \eqref{eq-def-N}. Its kernel space is non-trivial and satisfies   $ \Ker \mathrm L_{\mathbf h} \h{1pt}= \h{1pt} \Span\big\{\h{0.5pt}\mathbf h_1\h{0.5pt}\big\}$, where the function  $\mathbf h_1$ is defined in   \eqref{equiv-harmonic}. Moreover $\mathrm{L}_{\mathbf{h}}$ satisfies the following coercivity result: 

  \begin{lemma}[Lemma 2.4 in \cite{ GKT}]\label{lem-Coercivity}  
  Let $m$ be an integer with  $|m|\geq 3 $.  If  $z\in   X$ satisfies
\begin{equation}\label{eq-orthogonal-z-h_1-no-time}
\int_0^\infty \h{1.5pt} z  \h{1pt}     \h{1pt}  \mathbf h_1 \h{1.5pt} \rho\h{0.5pt} \mathrm d\h{0.5pt} \rho  \h{1pt} =\h{1pt} 0,
   \end{equation}  then we have
\begin{equation}\label{eq-coer-L_h}
  \left \|\h{0.5pt} z\h{0.5pt} \right\|_{X}^2\h{2pt}  \lesssim  \h{2pt}  \int_0^{\infty} \big|\h{1pt}\mathrm{L}_{\mathbf{h}} \h{1pt}z\h{1pt}\big|^2 \rho\h{1pt} \mathrm d \h{1pt}\rho  .
 \end{equation}  
   \end{lemma}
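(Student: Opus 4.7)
The plan is to use the factorization $N = \rL_{\mathbf{h}}^*\rL_{\mathbf{h}}$ in \eqref{eq-def-N} together with a spectral-gap argument. The key observation is that the kernel of $\rL_{\mathbf{h}}$ on $\rL^2(\rho\,d\rho)$ is one-dimensional, spanned by $\mathbf{h}_1$. Indeed, differentiating $\mathbf{h}_1(\rho)=2/(\rho^{|m|}+\rho^{-|m|})$ yields $\mathbf{h}_1' = -(m\mathbf{h}_3/\rho)\mathbf{h}_1$, so $\rL_{\mathbf{h}}\mathbf{h}_1=0$; ODE uniqueness identifies all other solutions as multiples of $\mathbf{h}_1$. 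Hence the coercivity \eqref{eq-coer-L_h} amounts to saying that $N$ has a spectral gap above $0$ on the $\rL^2(\rho\,d\rho)$-orthogonal complement of $\mathbf{h}_1$.

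Next I would compute the quadratic form $\int_0^\infty |\rL_{\mathbf{h}} z|^2\rho\,d\rho$ explicitly. Expanding the square and writing the cross term as the total derivative $(m\mathbf{h}_3/\rho)\p_\rho|z|^2$, then integrating by parts (boundary contributions vanish because $z(0)=z(\infty)=0$ by \eqref{eq-z-sobolev} and $\mathbf{h}_3$ is bounded), and finally using $\mathbf{h}_3'=m\mathbf{h}_1^2/\rho$ and $\mathbf{h}_1^2+\mathbf{h}_3^2=1$, one obtains the identity
\begin{equation*}
\int_0^\infty |\rL_{\mathbf{h}} z|^2\rho\,d\rho \h{2pt}=\h{2pt} \int_0^\infty |\p_\rho z|^2\rho\,d\rho \h{2pt}+\h{2pt} m^2\int_0^\infty \frac{1-2\mathbf{h}_1^2}{\rho^2}|z|^2\rho\,d\rho.
\end{equation*}
Since $|m|\geq 3$ implies $m^2\geq 9\geq 1$, rearranging immediately gives
\begin{equation*}
\|z\|_X^2 \h{2pt}\leq\h{2pt} \int_0^\infty |\rL_{\mathbf{h}} z|^2\rho\,d\rho \h{2pt}+\h{2pt} 2m^2\int_0^\infty \frac{\mathbf{h}_1^2}{\rho^2}|z|^2\rho\,d\rho,
\end{equation*}
so the lemma reduces to absorbing the last potential term into $\|\rL_{\mathbf{h}}z\|_{\rL^2}^2$ under the orthogonality \eqref{eq-orthogonal-z-h_1-no-time}.

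For this absorption I would run a weak-compactness contradiction. Assuming it fails, take a sequence $z_n\in X$ with $\int_0^\infty z_n\mathbf{h}_1\rho\,d\rho=0$, $\int_0^\infty (\mathbf{h}_1^2/\rho^2)|z_n|^2\rho\,d\rho=1$, and $\|\rL_{\mathbf{h}}z_n\|_{\rL^2}\to 0$. The bounds above then force $\|z_n\|_X\leq C$, and along a subsequence $z_n\rightharpoonup z_\infty$ weakly in $X$. The crucial step is the compact embedding $X\hookrightarrow \rL^2(\mathbf{h}_1^2\rho^{-1}d\rho)$: the $\rL^\infty$ bound from \eqref{eq-z-sobolev} plus the inequality $|z_n(\rho_1)-z_n(\rho_2)|\leq \|z_n\|_X\sqrt{\log(\rho_2/\rho_1)}$ give equicontinuity on each compact subinterval $[a,b]\subset(0,\infty)$, and the rapid decay $\mathbf{h}_1^2/\rho^2 = O(\rho^{2|m|-2})$ near $0$ and $O(\rho^{-2|m|-2})$ near $\infty$ kills the tails, so Arzel\`a--Ascoli and a diagonal argument upgrade the weak convergence to strong convergence in this weighted $\rL^2$. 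Consequently $\int_0^\infty(\mathbf{h}_1^2/\rho^2)|z_\infty|^2\rho\,d\rho=1$, i.e., $z_\infty\neq 0$. Weak lower semicontinuity applied to the identity above then gives $\|\rL_{\mathbf{h}}z_\infty\|_{\rL^2}=0$, so $z_\infty\in\Span\{\mathbf{h}_1\}$. Since $|\int z\mathbf{h}_1\rho\,d\rho|\leq \|z\|_{\rL^\infty}\|\mathbf{h}_1\|_{\rL^1(\rho d\rho)}\lesssim \|z\|_X$ (using $|m|\geq 3$ for $\mathbf{h}_1\in \rL^1(\rho\,d\rho)$), the orthogonality functional is continuous on $X$ and passes to the weak limit, forcing $z_\infty=0$ and yielding the contradiction.

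The main obstacle is the weighted compact embedding: ordinary Rellich on the half-line $(0,\infty)$ is not available, so one has to combine the uniform $\rL^\infty$ bound from \eqref{eq-z-sobolev}, the logarithmic modulus-of-continuity estimate on compact subintervals, and the polynomial decay of $\mathbf{h}_1^2/\rho^2$ at both endpoints (where the hypothesis $|m|\geq 3$ enters crucially) to truncate and then extract uniform convergence via Arzel\`a--Ascoli. Once this compactness is established, the spectral-gap/contradiction argument is routine.
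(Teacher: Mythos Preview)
The paper does not supply a proof of this lemma; it is quoted verbatim as Lemma~2.4 of \cite{GKT} and used as a black box. So there is no ``paper's own proof'' to compare against---the relevant comparison is whether your argument actually establishes the cited result.

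Your argument is correct and is essentially the standard route. The quadratic-form identity
\[
\int_0^\infty |\rL_{\mathbf{h}} z|^2\rho\,d\rho = \int_0^\infty |\p_\rho z|^2\rho\,d\rho + m^2\int_0^\infty \frac{1-2\mathbf{h}_1^2}{\rho^2}|z|^2\rho\,d\rho
\]
is derived correctly (the key inputs $\mathbf{h}_3'=m\mathbf{h}_1^2/\rho$ and $\mathbf{h}_1^2+\mathbf{h}_3^2=1$ are right), and the reduction to absorbing the compactly-supported potential $2m^2\mathbf{h}_1^2/\rho^2$ is clean. The contradiction step is sound: the weight $\mathbf{h}_1^2/\rho$ is integrable on $(0,\infty)$, which together with the uniform $\rL^\infty$ bound and equicontinuity from $X$ gives the compact embedding $X\hookrightarrow \rL^2(\mathbf{h}_1^2\rho^{-1}d\rho)$. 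Writing $\|\rL_{\mathbf{h}} z\|_{\rL^2}^2$ as the weakly lower-semicontinuous part $\int|\p_\rho z|^2+m^2\int|z|^2/\rho^2$ minus the compact part $2m^2\int\mathbf{h}_1^2|z|^2/\rho^2$ justifies passing to the limit. Finally, your observation that $\mathbf{h}_1\in\rL^1(\rho\,d\rho)$ precisely when $|m|\geq 3$ is what makes the orthogonality functional a bounded linear form on $X$ and hence weakly continuous; this is exactly where the hypothesis $|m|\geq 3$ is used, and it correctly forces $z_\infty=0$ and closes the contradiction.
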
\noindent Moreover  we have the following equivalence of $\| \h{1pt}z \h{1pt}\|_X$ and $\|\h{1pt}q\h{1pt}\|_{\mathrm{L}^2}$:
 \begin{lemma}\label{equiv-norms} Suppose that $z \in X$ and  satisfies the orthogonality condition \eqref{eq-orthogonal-z-h_1-no-time}. $q$ is the complexified function $q_1 + i\h{1pt}q_2$, where $q_1$ and $q_2$ are defined in \eqref{eq-def-q}.  There exists a positive constant $\e_0 = \e_0(m)$ suitably small so that 
  if  $\left\|\h{1pt}z\h{1pt}\right\|_X \h{1pt}<\h{1pt}\e_0$,  then    it holds    
  \begin{equation*}\label{eq-est-z-q}
   \left\|\h{1pt} q\h{1pt} \right\|_{\rL^2} \h{2pt}\lesssim \h{2pt} \left \|\h{1pt} z\h{1pt} \right\|_{X}\h{2pt} \lesssim  \h{2pt}  \left\|\h{1pt} q\h{1pt} \right\|_{\rL^2} . 
 \end{equation*}   \end{lemma}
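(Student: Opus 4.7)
\textbf{Proof plan for Lemma \ref{equiv-norms}.} First I exploit rotation invariance: since $e^{\Theta\rR}\in SO(3)$ commutes with $\rR$, writing $\tilde\varphi := (1+\gamma)\mathbf{h} + z_1\mathbf{e}_2 + z_2\,\mathbf{h}\times\mathbf{e}_2$ one checks directly that $q[\varphi] = \sigma^{-1}e^{\Theta\rR}\,Q(\rho)$ with
$$
Q(\rho) \,:=\, \partial_\rho\tilde\varphi \,-\, \frac{m}{\rho}\,\tilde\varphi\times \rR\tilde\varphi.
$$
Since $r\,\mathrm dr = \sigma^2\rho\,\mathrm d\rho$, the scaling factor cancels and $\|q\|_{\rL^2}^2 = \int_0^\infty |Q|^2 \rho\,\mathrm d\rho$. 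The lemma thus reduces to the $\sigma$-free comparison of $\|Q\|_{\rL^2(\rho\,\mathrm d\rho)}$ with $\|z\|_X$.

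The core of the argument is to expand $Q$ in the pointwise orthonormal frame $\{\mathbf{h}(\rho),\mathbf{a},\mathbf{b}(\rho)\}$ with $\mathbf{a}:=\mathbf{e}_2$ and $\mathbf{b}:=\mathbf{h}\times\mathbf{e}_2$. The harmonic-map identities $\mathbf{h}_1' = -m\mathbf{h}_1\mathbf{h}_3/\rho$ and $\mathbf{h}_3' = m\mathbf{h}_1^2/\rho$ yield the clean formula $\partial_\rho\mathbf{h} = (m\mathbf{h}_1/\rho)\mathbf{b}$; and a short computation gives $\rR\mathbf{h}=\mathbf{h}_1\mathbf{a}$, $\rR\mathbf{a}=-\mathbf{e}_1$, $\rR\mathbf{b}=-\mathbf{h}_3\mathbf{a}$. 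Substituting these into $\partial_\rho\tilde\varphi$ and $\tilde\varphi\times\rR\tilde\varphi$ and simplifying via the normalization $(1+\gamma)^2 + |z|^2 = 1$, the two tangential components collapse to
$$
Q\cdot\mathbf{a} \,=\, \rL_{\mathbf{h}}z_1 + E_1, \qquad Q\cdot\mathbf{b} \,=\, \rL_{\mathbf{h}}z_2 + E_2, \qquad |E_j|\,\lesssim\, |z|^2/\rho.
$$
For the normal component I would bypass the direct expansion: $|\tilde\varphi|\equiv 1$ forces $\tilde\varphi\cdot Q = 0$, so $(1+\gamma)\,Q\cdot\mathbf{h} = -z_1\, Q\cdot\mathbf{a} - z_2\, Q\cdot\mathbf{b}$; since $|\gamma|\le 1/2$ when $\|z\|_X$ is sufficiently small, this yields $|Q\cdot\mathbf{h}|\lesssim |z|\bigl(|Q\cdot\mathbf{a}|+|Q\cdot\mathbf{b}|\bigr)$, and consequently $|Q|^2 \sim |Q\cdot\mathbf{a}|^2 + |Q\cdot\mathbf{b}|^2$.

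The rest is a standard small-perturbation comparison. The Sobolev embedding \eqref{eq-z-sobolev} gives $\bigl\||z|^2/\rho\bigr\|_{\rL^2(\rho\,\mathrm d\rho)}^2 \leq \|z\|_{\rL^\infty}^2 \int_0^\infty |z|^2/\rho^2\,\rho\,\mathrm d\rho \lesssim \|z\|_X^4$, so integrating the pointwise estimates above and applying the triangle inequality yields $\bigl|\,\|Q\|_{\rL^2(\rho\,\mathrm d\rho)} - \|\rL_{\mathbf{h}}z\|_{\rL^2(\rho\,\mathrm d\rho)}\,\bigr|\lesssim \|z\|_X^2$. The elementary bound $\|\rL_{\mathbf{h}}z\|_{\rL^2(\rho\,\mathrm d\rho)}\lesssim_m \|z\|_X$ (immediate from $|\mathbf{h}_3|\le 1$), together with the coercivity $\|z\|_X\lesssim_m \|\rL_{\mathbf{h}}z\|_{\rL^2(\rho\,\mathrm d\rho)}$ supplied by Lemma \ref{lem-Coercivity} under the orthogonality \eqref{eq-orthogonal-z-h_1-no-time}, then gives $\|Q\|_{\rL^2(\rho\,\mathrm d\rho)}\lesssim \|z\|_X$ unconditionally and $\|Q\|_{\rL^2(\rho\,\mathrm d\rho)}\gtrsim \|z\|_X - C\|z\|_X^2$, the latter becoming $\gtrsim \|z\|_X$ once $\|z\|_X < \e_0(m)$ is taken small enough to absorb the quadratic error.

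The only delicate step is the algebraic expansion in the second paragraph; everything else is routine once Lemma \ref{lem-Coercivity} is invoked. The fact that the tangential components of $Q$ simplify to $\rL_{\mathbf{h}}z_j$ at leading order is the whole point of using the gauge-like definition $q = \partial_r\varphi - (m/r)\varphi\times\rR\varphi$, since $\rL_{\mathbf{h}}^*\rL_{\mathbf{h}}$ is precisely the linearization of the equivariant harmonic-map operator at $\mathbf{h}$.
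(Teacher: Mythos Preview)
Your proof is correct. The paper itself does not give a detailed argument: it simply states that the first inequality $\|q\|_{\rL^2}\lesssim\|z\|_X$ follows from the decomposition \eqref{eq-decom-varphi} and the definition \eqref{eq-def-q}, and that the second inequality $\|z\|_X\lesssim\|q\|_{\rL^2}$ is Proposition~2.3 of \cite{GKT}.

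Your approach is therefore more explicit than the paper's, though it follows the same underlying idea that lies behind the cited result in \cite{GKT}. By expanding $Q$ in the orthonormal frame $\{\mathbf{h},\mathbf{e}_2,\mathbf{h}\times\mathbf{e}_2\}$ you recover the linearization $\rL_{\mathbf{h}}z_j$ in the tangential components with quadratic remainders, and then close with Lemma~\ref{lem-Coercivity}. This has the advantage of being self-contained within the paper's framework, avoiding the external citation entirely; the cost is the algebraic expansion, which you correctly identify as the only nontrivial step. Both routes rely on the same coercivity statement (Lemma~\ref{lem-Coercivity}) for the hard direction, so the difference is one of presentation rather than substance.
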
  \noindent  The first inequality in Lemma 2.4 can be obtained by \eqref{eq-decom-varphi} and \eqref{eq-def-q}. The second inequality in Lemma 2.4 is due to  Proposition 2.3 in \cite{GKT}. To end this section,  we state a coercivity result  on the operator   $\mathrm L_{m}$ (see \eqref{eq-Lm-Lm*}). We refer to Lemma 4.1 in \cite{CY2} for  the proof. 
  \begin{lemma} There exists a positive constant $\e_0 = \e_0(m)$ suitably small so that 
  if  $\left\|\h{1pt}z\h{1pt}\right\|_X \h{1pt}<\h{1pt}\e_0$,  then    it holds   
\begin{equation*}\label{eq-coer-L_m}
 \int_0^{\infty} \, \left| \h{1pt}\p_r  \h{0.5pt}q \h{1pt}\right|^2  \h{0.5pt}+ \h{0.5pt}\frac{\left| \h{1pt}q \h{1pt}\right|^2}{r^2}\,  
\,\,   \lesssim   \,\,   \int_0^{\infty} \,\left| \h{1pt} \mathrm{L}_{m} \h{1pt}q\h{1pt}\right|^2.
   \end{equation*}
   \end{lemma}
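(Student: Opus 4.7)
The plan is to view $\rL_m$ as a small perturbation of the model operator
\[\rL_m^{(0)} \h{2pt}:=\h{2pt} \p_r + \frac{1}{r} - \frac{m\,\mathbf{h}_3(r/\sigma)}{r}\]
associated with the background harmonic map $\mathbf{h}^{\Theta,\sigma}$. I would first prove coercivity of $\rL_m^{(0)}$ by a direct integration by parts, then absorb the difference $\rL_m - \rL_m^{(0)}$ using smallness of $\|z\|_X$. The reduction is natural because $e^{\Theta\rR}$ preserves the third component, so the decomposition \eqref{eq-decom-varphi} yields $\varphi_3 = (1+\gamma)\,\mathbf{h}_3 + z_2\,\mathbf{h}_1$; combined with \eqref{eq-prop-gamma} and $\|z\|_{\rL^\infty} \leq 1/2$, this gives $|\varphi_3 - \mathbf{h}_3| \lesssim |z|$ pointwise.

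For the model coercivity, I would expand
\[\int_0^\infty \big|\rL_m^{(0)} q\big|^2 \h{2pt}=\h{2pt} \int_0^\infty |\p_r q|^2 + \int_0^\infty \frac{(1 - m\,\mathbf{h}_3)^2}{r^2}|q|^2 + \int_0^\infty \frac{1 - m\,\mathbf{h}_3}{r}\,\p_r|q|^2,\]
where all integrals carry the implicit measure $r\,dr$ and the cross term uses $2\,\mathrm{Re}\langle\p_r q,q\rangle = \p_r|q|^2$. An integration by parts in the last term (with boundary contributions vanishing because the $m$-equivariant regularity of $\varphi$ forces $q(0) = 0$ at rate $r^{m-1}$ and $q \to 0$ at infinity) produces $m\int_0^\infty \p_r\mathbf{h}_3 \cdot |q|^2\,dr$. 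The explicit identity $\p_\rho\mathbf{h}_3 = m\,\mathbf{h}_1^2/\rho$, derived directly from \eqref{equiv-harmonic}, transforms this contribution into $m^2\int_0^\infty \mathbf{h}_1^2\,|q|^2/r\,dr$. Recombining and invoking $\mathbf{h}_1^2 + \mathbf{h}_3^2 = 1$, the net coefficient of $|q|^2/r$ becomes $(1-m\mathbf{h}_3)^2 + m^2\mathbf{h}_1^2 = 1 + m^2 - 2m\mathbf{h}_3 \geq (m-1)^2 \geq 4$ for $m \geq 3$, yielding the model coercivity
\[\int_0^\infty \big|\rL_m^{(0)}q\big|^2 \h{2pt}\geq\h{2pt} \int_0^\infty |\p_r q|^2 + 4\int_0^\infty \frac{|q|^2}{r^2}.\]

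For the perturbation step, $\rL_m - \rL_m^{(0)}$ is multiplication by $m(\mathbf{h}_3 - \varphi_3)/r$, and the Sobolev embedding \eqref{eq-z-sobolev} combined with $|\mathbf{h}_3 - \varphi_3| \lesssim |z|$ gives $\|(\rL_m - \rL_m^{(0)})q\|_{\rL^2} \lesssim \e_0\,\|q/r\|_{\rL^2}$. The triangle inequality then produces $\|\rL_m^{(0)}q\|_{\rL^2}^2 \leq 2\|\rL_m q\|_{\rL^2}^2 + C\e_0^2\,\|q/r\|_{\rL^2}^2$, and choosing $\e_0$ small enough to absorb the $C\e_0^2$ term into the left-hand side via the model coercivity delivers the desired estimate. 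The main technical obstacle is confirming the vanishing of boundary terms in the integration by parts, which requires the $m$-equivariant regularity of $\varphi$ near $r=0$ and $r=\infty$; after that, the entire argument rests on the algebraic identity $1 + m^2 - 2m\mathbf{h}_3 \geq (m-1)^2$, for which the hypothesis $|m| \geq 3$ leaves exactly the margin needed to absorb the small perturbation.
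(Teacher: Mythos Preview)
Your argument is correct. The paper itself does not prove this lemma; it simply states the result and refers to Lemma~4.1 of \cite{CY2} for the proof. Your self-contained derivation follows the natural route one would expect for such a reference: first establish the exact identity
\[
\int_0^\infty \big|\rL_m^{(0)}q\big|^2 \;=\; \int_0^\infty |\p_r q|^2 + \int_0^\infty \frac{1+m^2-2m\,\mathbf{h}_3(r/\sigma)}{r^2}\,|q|^2
\]
for the unperturbed operator via integration by parts and the identity $\p_\rho \mathbf{h}_3 = m\,\mathbf{h}_1^2/\rho$, then use $1+m^2-2m\mathbf{h}_3 \geq (m-1)^2 \geq 4$ to get model coercivity, and finally absorb the multiplicative perturbation $m(\mathbf{h}_3-\varphi_3)/r$ using $|\varphi_3-\mathbf{h}_3| = |\gamma\,\mathbf{h}_3 + z_2\,\mathbf{h}_1| \lesssim |z| \leq \|z\|_X$. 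Every step checks out, including the computation of $\varphi_3$ from the decomposition \eqref{eq-decom-varphi}.

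The only point that deserves a remark is the handling of the boundary terms. You correctly flag this as the main technical issue. Note that the boundary contribution at $r=\infty$ carries the coefficient $1-m\mathbf{h}_3(\infty)=1-m<0$, so if one merely had $|q|^2\to c>0$ at infinity the identity would still give an inequality in the right direction; the genuine concern is at $r=0$, where the coefficient $1-m\mathbf{h}_3(0)=1+m>0$. There the vanishing of $q$ does follow from the $m$-equivariant regularity of $\varphi$ (indeed $q$ inherits decay like $r^{m-1}$ near the origin), and for the class of solutions considered in the paper this is guaranteed. In a fully rigorous write-up one would either invoke this regularity explicitly or argue by density with smooth compactly supported approximants, but your proof sketch is otherwise complete.
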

   \
  \\
  \\
\setcounter{section}{3}
\setcounter{thm}{0}
\setcounter{equation}{0}
\noindent \textbf{III.  ESTIMATES OF ENERGY AND MODULATION PARAMETERS.}\\

In this section we study various energy estimates related to a solution $\big(\h{0.5pt}W, V, q \h{0.5pt}\big)$ of  \eqref{eq-main-q-W-V}. \\
\\
\textbf{III.1. FUNDAMENTAL ENERGY ESTIMATES.} 
 \\
 
\noindent Recalling the Oseen part $W^{os}$ in \eqref{eq-def-Oseen}, we have$$ \p_t W^{os}   = \p_{rr} W^{os} - r^{-1} \p_r W^{os} .$$
 Therefore by the third equation in \eqref{eq-main-q-W-V}, $W^* := W - W^{os}$ satisfies 
 \begin{equation}\label{eq-W*}
  \p_t W^*  =   \p_{rr} W^* - \frac{1}{r}\h{0.5pt}\p_r  W^* + m\left(\p_r +\frac{1}{r}\right) \Big\langle q,\,i \h{0.5pt}v\Big\rangle.
\end{equation}
In the following lemma, we give an energy identity related to $q $, $v $, $V $ and $W^*$. 

\begin{lemma}\label{lem-energy-identity} 
 Suppose that $q$, $v$, $V$ and $W^*$ satisfy \eqref{eq-main-q-W-V} and \eqref{eq-W*}. Then it holds  
  \begin{equation}\label{eq-energy-identity} 
  \frac{1}{2} \frac{\mathrm{d}}{\mathrm{d} \h{0.5pt}t} \h{1pt} \mathrm{E} + \int_0^{\infty}   \left| \h{1pt}\mathrm{L}_{m} q  + \mu^2  \h{1pt}v \h{1pt}\varphi_{3}  \h{1pt}\right|^2 +  \big( \p_r V \big)^2 + \frac{ \big( \p_r W^* \big)^2}{r^2}  =  m \int_0^{\infty} \left( \frac{W^{os}}{r^2} \right)_r \big< \h{0.5pt}q, i \h{0.5pt}v \h{0.5pt}\big>,
\end{equation}
where 
\begin{equation*} 
\mathrm{E} :=  \mathrm{E}^{*} +  \mu^2  \int_0^{\infty}  | \h{1pt}v \h{1pt}|^2 \h{15pt}\text{with}\h{8pt} \mathrm{E}^* :=  \int_0^{\infty}   |\h{1pt}q\h{1pt}|^2 + V^2 + \frac{\big( W^* \big)^2}{r^2} .
\end{equation*}
\end{lemma}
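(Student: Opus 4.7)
The plan is to test each evolution equation in \eqref{eq-main-q-W-V} against its natural pairing in $\rL^2(r\,\mathrm dr)$, and then to use the algebraic constraint $\rL_m^* v=\varphi_3\h{1pt}q$ together with integration by parts to recombine the cross terms. Pairing the $q$-equation with $q$, the stream contribution $\langle i\h{0.5pt}Sq,q\rangle$ vanishes because $S$ is real and $\langle i\h{0.5pt}q,q\rangle=0$; the principal dissipative piece produces $\int|\rL_m q|^2$ by moving one copy of $\rL_m^*$ back to $\rL_m$; there remain two cross terms, $\mu^2\!\int\langle\varphi_3 v,\rL_m q\rangle$ and $\int\langle i(mW/r^2+\mu V)v,\rL_m q\rangle$. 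Pairing the $V$-equation with $V$ produces $\int(\p_r V)^2$ plus a cross term $-\mu\!\int(\p_r V)\h{1pt}f$ with $f:=\langle q,i\h{0.5pt}v\rangle$; pairing the $W^*$-equation with $W^*/r^2$ produces $\int(\p_r W^*)^2/r^2$ plus $-m\!\int\p_r(W^*/r^2)\h{1pt}f$. Both of these use the $\rL^2(r\,\mathrm dr)$-adjoint identity $(\p_r+1/r)^*=-\p_r$ and the vanishing of boundary terms at $r=0,\infty$.

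\textbf{Emergence of the $W^{os}$ forcing.} The decisive algebraic step is the pointwise identity
\[ \big(\p_r+1/r\big) f \;=\; \big\langle\rL_m q,\,i\h{0.5pt}v\big\rangle, \]
which I verify by differentiating $f=\mathrm{Im}(q\bar v)$ and substituting $\p_r v=-\varphi_3 q-m\varphi_3 v/r$ coming from $\rL_m^* v=\varphi_3 q$; the contribution $\varphi_3\langle q,i\h{0.5pt}q\rangle$ drops since $\langle q,i\h{0.5pt}q\rangle=0$, and the leftover $m\varphi_3/r$-term reassembles exactly into the missing $-m\varphi_3/r$ of $\rL_m$. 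Using this identity, one integration by parts converts the $q$-cross term into $-m\!\int\p_r(W/r^2)\h{1pt}f-\mu\!\int(\p_r V)\h{1pt}f$. Adding the three energy identities, the two $\mu\!\int(\p_r V)\h{1pt}f$ contributions cancel, and the $W$-cross term combines with the $W^*$-cross term as $-m\!\int\p_r(W/r^2)\h{1pt}f+m\!\int\p_r(W^*/r^2)\h{1pt}f=-m\!\int\p_r(W^{os}/r^2)\h{1pt}f$, which (after transposition) matches the right-hand side of \eqref{eq-energy-identity}.

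\textbf{The $|v|^2$ identity and main obstacle.} At this point the lemma is proved with $\mathrm E$ replaced by $\mathrm E^*$ and $|\rL_m q+\mu^2 v\varphi_3|^2$ replaced by $|\rL_m q|^2+\mu^2\langle\varphi_3 v,\rL_m q\rangle$. To close the gap I need the companion identity
\[ \tfrac12\tfrac{d}{dt}\!\int |v|^2 \;=\; -\!\int\langle\rL_m q,\,\varphi_3 v\rangle \,-\, \mu^2\!\int\varphi_3^2|v|^2, \]
for adding $\frac12\mu^2\frac{d}{dt}\!\int|v|^2$ to both sides then promotes $\mathrm E^*$ to $\mathrm E$ and completes the square $\int|\rL_m q|^2+2\mu^2\langle\varphi_3 v,\rL_m q\rangle+\mu^4\varphi_3^2|v|^2=\int|\rL_m q+\mu^2 v\varphi_3|^2$. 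Using $|v|^2=1-\varphi_3^2$ together with the third-component of the $\varphi$-equation (in which $(\rR\varphi)_3=(\rR^2\varphi)_3=0$), the left side expands as $\int(\p_r\varphi_3)^2-\int|\p_r\varphi|^2\varphi_3^2-\int(m^2/r^2+\mu^2)|v|^2\varphi_3^2$. Rewriting the right side via $\rL_m^*(\varphi_3 v)=-(\p_r\varphi_3)v+\varphi_3^2 q$ (from the constraint), and using $\p_r\varphi=q+(m/r)v$ and $\p_r\varphi_3=\langle q,v\rangle+(m/r)|v|^2$ in the orthonormal tangent frame $\{e,\varphi\times e\}$ (where $\varphi\times\rR\varphi$ is identified with $v$), the identity reduces after cancellation to
\[ m\!\int_0^{\infty}\p_r\!\big(\varphi_3-\varphi_3^3\big)\,\mathrm dr \;=\; 0, \]
which holds by the boundary values $\varphi_3(0)=-1$ and $\varphi_3(\infty)=1$. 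This topological cancellation is the main obstacle in the proof: it is the only step that genuinely uses the global $\mathbb S^2$-target geometry, and the pointwise discrepancy $(m/r)(\p_r\varphi_3)(1-3\varphi_3^2)$ is only zero-mean thanks to the antipodal boundary conditions of an $m$-equivariant map in $\Sigma_m$.
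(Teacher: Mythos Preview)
Your proof is correct and follows the same overall architecture as the paper: test each evolution equation against its natural partner, combine, and complete the square using the evolution of $\int|v|^2$. Your handling of the cross term in the $q$-equation via the pointwise identity $(\p_r+1/r)\langle q,iv\rangle=\langle\rL_m q,iv\rangle$ is equivalent to the paper's direct expansion of $i\rL_m^*[(mW/r^2+\mu V)v]$ together with $\langle q,i\varphi_3 q\rangle=0$.

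The one genuine difference is in deriving the $|v|^2$ identity. The paper tests the full $\varphi$-equation against $r(\varphi_1,\varphi_2,0)^t$ and invokes the \emph{pointwise} moving-frame formula
\[
\Delta_2\varphi+|\p_r\varphi|^2\varphi+\frac{m^2}{r^2}\big[\rR^2\varphi+|\rR\varphi|^2\varphi\big]=(\rL_m q_1)\,\mathbf e+(\rL_m q_2)\,\varphi\times\mathbf e,
\]
which immediately gives $\tfrac12\frac{d}{dt}\int|v|^2+\mu^2\int|v|^2\varphi_3^2=-\int\langle\rL_m q,\varphi_3 v\rangle$ without any boundary computation. You instead test only the third component against $\varphi_3$, expand everything in $q,v$, and reduce the discrepancy to the total-derivative integral $m\int_0^\infty\p_r(\varphi_3-\varphi_3^3)\,dr$, which vanishes by the antipodal boundary values. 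Both routes are valid; the paper's is shorter because the tension-field identity absorbs the algebra, while yours makes explicit the role of the $\Sigma_m$ boundary conditions that is otherwise hidden in that identity.
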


\begin{proof}[\bf Proof]
Multiplying $W^* \big/ r$ on both sides of \eqref{eq-W*} and integrating over $(0, \infty)$, we obtain 
\begin{equation}\label{eq-energy-est-W*} 
\frac{1}{2}\h{1pt}\frac{\mathrm{d}}{\mathrm{d} t} \int_0^{\infty} \frac{\big( W^* \big)^2}{r^2} + \int_0^{\infty} \frac{ \big( \p_r W^* \big)^2}{r^2} + m \int_0^{\infty} \left( \frac{W^*}{r^2} \right)_r \big< \h{0.5pt}q, i \h{0.5pt}v \h{0.5pt}\big> = 0.
\end{equation} 
Multiplying $r V$ on both sides of the last equation in \eqref{eq-main-q-W-V} and integrating over $(0, \infty)$, we get 
\begin{equation} \label{eq-energy-est-V} 
\frac{1}{2}\h{1pt}\frac{\mathrm{d}}{\mathrm{d} t} \int_0^{\infty} V^2 + \int_0^{\infty}  \big( \p_r V \big)^2  + \mu \int_0^{\infty}  \p_r V  \h{1.5pt}  \big< \h{0.5pt}q, i \h{0.5pt}v \h{0.5pt}\big> = 0.
\end{equation}
By taking inner product with $r q $ on both sides of the first equation in \eqref{eq-main-q-W-V} and integrating over $(0, \infty)$, it follows 
\begin{equation}\label{eq-energy-est-q-0}  
 \frac{1}{2}\h{1pt}\frac{\mathrm{d}}{\mathrm{d} t} \int_0^{\infty} |\h{1pt}q\h{1pt}|^2 + \int_0^{\infty} \big| \h{1pt}\mathrm{L}_{m} q \h{1pt}\big|^2 +     \int_0^{\infty} \left< q, i \h{1pt}\rL_{m}^* \left[ \left(\frac{m W}{r^2}+ \mu V  \right)v\right] \h{1pt} \right>  = -  \mu^2 \int_0^{\infty} \big< \h{1pt}\mathrm{L}_{m} q,   \h{0.5pt}\varphi_{3} \h{0.5pt}v \h{1pt}\big>.
\end{equation}
Using the definition of $\mathrm{L}_m^*$ in \eqref{eq-Lm-Lm*} and the second equation in \eqref{eq-main-q-W-V}, one can show that 
\begin{equation}\label{eq-integration-by-part-Lm}
\begin{aligned}
 i\h{1pt}\rL_{m}^* \left[ \left(\frac{m W}{r^2}+ \mu  V   \right)v\right]  &= -  i\h{1pt} v \left(\frac{m W}{r^2}+  \mu V   \right)_r +  \left(\frac{m W}{r^2}+ \mu V  \right)  i\h{1pt} \mathrm{L}_{m}^* v  \\[4pt]
&= - i\h{1pt}  v \left(\frac{m W}{r^2}+ \mu  V   \right)_r +  \left(\frac{m W}{r^2}+ \mu V  \right)  i\h{1pt} \varphi_{3} \h{1pt} q.  
\end{aligned}
 \end{equation}
Applying this equality to \eqref{eq-energy-est-q-0} yields 
\begin{equation}\label{eq-energy-est-q-1}
  \frac{1}{2}\h{1pt}\frac{\mathrm{d}}{\mathrm{d} t} \int_0^{\infty} |\h{1pt}q\h{1pt}|^2 + \int_0^{\infty} \big| \h{1pt}\mathrm{L}_{m} q \h{1pt}\big|^2 -     \int_0^{\infty} \left(\frac{m W}{r^2}+ \mu V   \right)_r \left<\h{0.5pt} q, i \h{1pt}v \h{1pt} \right>  = - \mu^2 \int_0^{\infty} \big< \h{1pt}\mathrm{L}_{m} q, \varphi_{3}v \h{0.5pt}   \h{1pt}\big>.
\end{equation}
This combined with \eqref{eq-energy-est-W*}-\eqref{eq-energy-est-V} implies the following identity  
 \begin{equation}\label{eq-energy-est-q-2}
 \frac{1}{2}\h{1pt}\frac{\mathrm{d}}{\mathrm{d} \h{1pt}t} \h{1.5pt} \mathrm{E}^* + \int_0^{\infty}  \big| \h{1pt}\mathrm{L}_{m} q \h{1pt}\big|^2 + \big( \p_r V \big)^2 + \frac{ \big( \p_r W^* \big)^2}{r^2} + \mu^2 \int_0^{\infty} \big< \h{1pt}\mathrm{L}_{m} q, \varphi_{3} \h{0.5pt}v \h{1pt}\big> =   \int_0^{\infty} \left( \frac{m W^{os}}{r^2} \right)_r \big< \h{0.5pt}q, i \h{0.5pt}v \h{0.5pt}\big>.
\end{equation}
Taking inner product with $r \h{1pt}(\varphi_{1}, \varphi_{2}, 0)^t$ on both sides of the first equation in \eqref{eq-main-phi-W-V}
 and integrating over $(0, \infty)$, we get 
 \begin{equation}\label{eq-energy-est-v-0}
 \begin{aligned}
  \frac{1}{2} \h{1pt}\frac{\mathrm{d}}{\mathrm{d} \h{1pt}t } \int_0^{\infty} \big| \h{1pt}v \h{1pt}\big|^2  + \mu^2 \int_0^{\infty} \big|\h{1pt}v\h{1pt}\big|^2 \varphi_{3}^2 
   & =   \int_0^{\infty} \Big< \h{1pt} \big( \mathrm{L}_{m} q_{1} \big) \mathbf{e} + \big( \mathrm{L}_{m} q_{2} \big) \varphi \times \mathbf{e}\,,\, \sum_{j = 1}^2 \varphi_{ j} \h{1pt} \mathbf{e}_j\,\Big> \\[2pt]
 &= - \int_0^{\infty} \big( \mathrm{L}_{m } q_{ 1} \big) \varphi_{  3} \big< \h{1pt}\mathbf{e}_3, \mathbf{e}  \h{1pt}\big> + \big( \mathrm{L}_{m} q_{2} \big) \varphi_{3} \big< \h{1pt}\mathbf{e}_3, \varphi
 \times \mathbf{e} \h{1pt}\big>.\\[2pt]
 \end{aligned}
\end{equation}
Here  in the first equality above we   have used the fact that 
 $$  \Delta_2\h{0.5pt}\varphi  + \big\vert \h{0.5pt} \p_{r} \varphi \h{1pt}\big\vert^2 \varphi    +  \frac{m^2}{r^2}   \bigg[\rR^2  \varphi+ \big\vert \h{1.5pt} \rR \varphi \h{0.5pt}\big\vert^2 \varphi\bigg] = \big( \mathrm{L}_{m} q_{1} \big) \h{1pt}\mathbf{e} + \big( \mathrm{L}_{m} q_{2} \big) \h{1pt}\varphi \times \mathbf{e},$$ while  for the second equality in \eqref{eq-energy-est-v-0},  the orthogonality of $\varphi$  with $\mathbf{e} $ and $\varphi \times \mathbf{e}$ is applied.
  Using the definition of $v$ in \eqref{eq-def-v}, 
  we can rewrite the equality \eqref{eq-energy-est-v-0}  as follows: 
  \begin{eqnarray*} \frac{1}{2} \h{1pt}\frac{\mathrm{d}}{\mathrm{d} \h{0.5pt}t} \int_0^{\infty} |\h{1pt}v \h{1pt}|^2  + \mu^2 \int_0^{\infty}  |\h{1pt}v \h{1pt}|^2  \varphi_{3}^2  =  - \int_0^{\infty} \big<\h{1pt} \mathrm{L}_{m} q, \varphi_{3}\h{1pt}v \h{1pt}\big>.
\end{eqnarray*}
Multiplying   both sides above by  $\mu^2$ and summing with \eqref{eq-energy-est-q-2}, we deduce \eqref{eq-energy-identity}. 
\end{proof}

By using \eqref{eq-energy-identity}, the energy $\mathrm{E}$ can be uniformly bounded for all $  t>0$. That is 
 \begin{lemma}\label{lem-energy-est-uniform-bound}
Suppose that  $q$, $v$, $V$ and $W^*$ satisfy \eqref{eq-main-q-W-V} and \eqref{eq-W*}.  Then we have, for all $ t>0$, that
 \begin{eqnarray}\label{eq-energy-est-uniform-bound}
  \mathrm{E} (t) +   \int_0^t \int_0^{\infty}   \left| \h{1pt}\mathrm{L}_{m } q   + \mu^2\h{1pt} v  \h{1pt}\varphi_{3}  \h{1pt}\right|^2  +  \big( \p_r V  \big)^2 + \frac{ \big( \p_r W^* \big)^2}{r^2}   \h{2pt}  \lesssim \h{2pt} \mathrm{E}(0).
\end{eqnarray} 
\end{lemma}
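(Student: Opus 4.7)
The plan is to start from the energy identity of Lemma 3.1 and reduce the whole matter to a Gronwall argument for $\mathrm{E}(t)$. Rewriting that identity as
\begin{equation*}
\tfrac{1}{2}\tfrac{d}{dt}\mathrm{E}(t) + \mathcal{D}(t) \,=\, \mathcal{R}(t), \qquad \mathcal{R}(t) \,:=\, m\int_0^\infty \bigl(W^{os}/r^2\bigr)_r \bigl\langle q, iv\bigr\rangle,
\end{equation*}
where $\mathcal{D}(t) \geq 0$ stands for the full dissipation integral on the left of \eqref{eq-energy-identity}, the task becomes to prove $|\mathcal{R}(t)| \leq F(t)\,\mathrm{E}(t)$ for some $F \in \mathrm{L}^1\bigl([0,\infty)\bigr)$.

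First I would compute the multiplier $\bigl(W^{os}/r^2\bigr)_r$ explicitly. From $W^{os}(r,t) = \omega\bigl(1 - e^{-r^2/l(t)}\bigr)$ with $l(t) := 4t + r_0^2$, direct differentiation gives
\begin{equation*}
\bigl(W^{os}/r^2\bigr)_r \,=\, \frac{2\omega r}{l(t)^2}\, g'\!\left(\frac{r^2}{l(t)}\right), \qquad g(\xi) := \frac{1 - e^{-\xi}}{\xi}.
\end{equation*}
A routine check shows that $\sqrt{\xi}\,|g'(\xi)|$ is uniformly bounded on $(0,\infty)$ (behaving like $\sqrt{\xi}/2$ near zero and like $\xi^{-3/2}$ at infinity). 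Substituting $\xi = r^2/l(t)$ and taking a supremum in $r$ yields
\begin{equation*}
\bigl\|\bigl(W^{os}/r^2\bigr)_r\bigr\|_{\mathrm{L}^\infty_r} \,\lesssim\, \frac{|\omega|}{l(t)^{3/2}} \,=\, \frac{|\omega|}{(4t+r_0^2)^{3/2}}.
\end{equation*}

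Next I would estimate $\mathcal{R}(t)$. Combining Cauchy-Schwarz (in the weighted measure $r\,dr$) with the pointwise bound $|v|^2 = 1 - \varphi_3^2 \leq 1$ — which follows from expanding $\mathbf{e}_3$ in the orthonormal frame $\{\mathbf{e},\varphi,\varphi\times\mathbf{e}\}$ via \eqref{eq-def-v} — gives
\begin{equation*}
|\mathcal{R}(t)| \,\leq\, |m|\,\bigl\|\bigl(W^{os}/r^2\bigr)_r\bigr\|_{\mathrm{L}^\infty_r}\,\|q\|_{\mathrm{L}^2}\,\|v\|_{\mathrm{L}^2} \,\lesssim\, \frac{1}{l(t)^{3/2}}\,\|q\|_{\mathrm{L}^2}\,\|v\|_{\mathrm{L}^2}.
\end{equation*}
Since $\|q\|_{\mathrm{L}^2}^2 + \mu^2\|v\|_{\mathrm{L}^2}^2 \leq \mathrm{E}(t)$ by definition of $\mathrm{E}$, AM-GM yields $\|q\|_{\mathrm{L}^2}\|v\|_{\mathrm{L}^2} \leq \mathrm{E}(t)/(2\mu)$, so that setting $F(t) := C\,l(t)^{-3/2}$ (with $C$ depending only on $m, \mu, \omega$) produces $|\mathcal{R}(t)| \leq F(t)\,\mathrm{E}(t)$.

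The critical point — and the step on which the argument hinges — is the integrability $\int_0^\infty F(s)\,ds \lesssim r_0^{-1}$; had the pointwise decay on $\bigl(W^{os}/r^2\bigr)_r$ been only $t^{-1}$, the resulting $\int_0^t F$ would grow logarithmically and Gronwall would yield only logarithmic control of $\mathrm{E}$. With $F\in \mathrm{L}^1$ in hand, Gronwall's inequality applied to $\tfrac{d}{dt}\mathrm{E} + 2\mathcal{D} \leq 2F(t)\,\mathrm{E}$ supplies $\mathrm{E}(t) \leq \mathrm{E}(0)\exp\bigl(2\int_0^\infty F\bigr) \lesssim \mathrm{E}(0)$, and integrating the differential inequality in $t$ then gives $\int_0^t \mathcal{D}(s)\,ds \lesssim \mathrm{E}(0)$, which is exactly \eqref{eq-energy-est-uniform-bound}.
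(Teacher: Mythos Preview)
Your proof is correct but proceeds differently from the paper. The paper integrates by parts on the right-hand side of the energy identity, using the relation $\mathrm{L}_m^* v = \varphi_3 q$ to obtain
\[
m\int_0^\infty \bigl(W^{os}/r^2\bigr)_r \langle q, iv\rangle \;=\; -m\int_0^\infty \frac{W^{os}}{r^2}\,\bigl\langle \mathrm{L}_m q + \mu^2 v\varphi_3,\, iv\bigr\rangle,
\]
and then applies Young's inequality together with the cruder bound $|W^{os}/r^2| \leq |\omega|/l(t)$ to absorb half of this term directly into the dissipation $|\mathrm{L}_m q + \mu^2 v\varphi_3|^2$; the resulting Gronwall factor is of order $l(t)^{-2}$. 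Your route is more elementary: you bypass the integration by parts and the structural identity entirely, at the price of needing the sharper pointwise estimate $\|(W^{os}/r^2)_r\|_{\mathrm{L}^\infty} \lesssim l(t)^{-3/2}$, which you obtain by direct computation. Both yield an integrable Gronwall factor, so both arguments close. The paper's version gains a slightly faster-decaying factor and exploits the algebraic coupling of the system; yours is self-contained and would work verbatim for any Oseen-type drift with the same scaling.
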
\begin{proof}[\bf Proof]
Using   integration by parts  and the second equation of (2.2), we have
 \begin{equation}\label{eq-est-W-os-Lm-q} 
 \begin{aligned}
 m \int_0^{\infty} \left( \frac{W^{os}}{r^2} \right)_r \big< \h{0.5pt}q, i \h{0.5pt}v \h{0.5pt}\big>& = \h{2pt} - m \int_0^{\infty}  \frac{W^{os}}{r^2} \big< \h{0.5pt} \mathrm{L}_{m} q, i \h{0.5pt}v \h{0.5pt}\big> =  - m \int_0^{\infty}  \frac{W^{os}}{r^2} \left< \h{0.5pt} \mathrm{L}_{m} q  + \mu^2 \h{0.5pt}  v \h{1pt}\varphi_{3} , i \h{0.5pt}v \h{0.5pt}\right> \\[4pt]
&\leq \h{2pt} \frac{1}{2} \int_0^{\infty} \left| \h{1pt} \mathrm{L}_{m} q  + \mu^2\h{1pt} v \h{1pt}\varphi_{3}  \h{1pt}\right|^2 + \frac{m^2}{2} \int_0^{\infty} |\h{1pt}v\h{1pt}|^2 \left( \frac{W^{os}}{r^2} \right)^2.
\end{aligned}
\end{equation}
Since it holds from   the definition of $W^{os}$ in  \eqref{eq-def-Oseen} that     \begin{equation}\label{eq-W^os-bound}
 \big|\h{1.5pt}W^{os}\h{1pt}\big| \,\leq \,|\h{1pt}\omega\h{1pt}|\h{2pt}  r^2\h{1pt} \big/ \h{2pt} l \qquad \hbox{ 
  on $\mathbb{R}^+$,}
  \end{equation}
 the estimate in \eqref{eq-est-W-os-Lm-q} then implies 
\begin{equation*} m \int_0^{\infty} \left( \frac{W^{os}}{r^2} \right)_r \big< \h{0.5pt}q, i \h{0.5pt}v \h{0.5pt}\big>\, \leq\,  \frac{1}{2} \int_0^{\infty} \left| \h{1pt} \mathrm{L}_{m} q  +  \mu^2\h{1pt}v \h{1pt}\varphi_{3}  \h{1pt}\right|^2 + \frac{m^2 \h{0.5pt} \omega^2}{2} \h{1pt} l^{-2} \int_0^{\infty} |\h{1pt}v\h{1pt}|^2. 
\end{equation*}
  Applying this estimate to the right-hand side of \eqref{eq-energy-identity}, we obtain 
  \begin{equation}\label{eq-energy-E-est} 
   \frac{\mathrm{d}}{\mathrm{d} t} \h{1pt} \mathrm{E} + \int_0^{\infty}    \left| \h{1pt}\mathrm{L}_{m} q  + \mu^2 \h{1pt}v \h{1pt}\varphi_{3}  \h{1pt}\right|^2 +  \big( \p_r V \big)^2   + \frac{ \big( \p_r W^* \big)^2}{r^2}   \h{2pt}  \leq \h{2pt}  \frac{ m^2\h{0.5pt}\omega^2 }{\mu^2 \h{0.5pt}l^2}   \h{2pt}\mathrm{E}.
\end{equation}Solving this ODE inequality for $\mathrm{E}$ gives us 
\begin{eqnarray*}\mathrm{E}(t)\,\leq\,
\mathrm{E}(0) \h{1pt}\exp\left\{ \h{1pt}\frac{ m^2\h{1pt}\omega^2 }{\mu^2 \h{1pt}r_0^2}\h{2pt} \frac{t}{4t+r_0^2} \h{1pt} \right\}, \end{eqnarray*}
which can be applied to the right-hand side of  \eqref{eq-energy-E-est} and yields the energy inequality  \eqref{eq-energy-est-uniform-bound}.
\end{proof}
As a corollary it follows
\begin{cor}\label{cor-energy-est-uniform-bound} 
 With the same assumptions as in Lemma \ref{lem-energy-est-uniform-bound}, 
for any positive constant $\e_*$, there exists a  small positive constant $\delta_*$ such that if \eqref{eqq-assump-small-initial} holds, then for all $t>0$, we have 
\begin{equation}\label{energy-est-uniform-bound-e*}
  \,\mathrm{E} (t) +   \int_0^t \int_0^{\infty}   \left| \h{1pt}\mathrm{L}_{m } q   + \mu^2\h{1pt} v  \h{1pt}\varphi_{3}  \h{1pt}\right|^2  +  \big( \p_r V  \big)^2 + \frac{ \big( \p_r W^* \big)^2}{r^2}   \h{2pt}   <\, \e_*^2.
\end{equation} Here $\delta_*$ depends on $m,$  $ \mu,$ $\omega,$  $r_0$ and  $\e_*.$
\end{cor}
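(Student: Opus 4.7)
The corollary is an immediate strengthening of Lemma 3.2: the estimate \eqref{eq-energy-est-uniform-bound} already gives
\begin{equation*}
\mathrm{E}(t) + \int_0^t\!\int_0^\infty |\mathrm{L}_m q + \mu^2 v\varphi_3|^2 + (\partial_r V)^2 + \frac{(\partial_r W^*)^2}{r^2}\ \leq\ C(m,\mu,\omega,r_0)\,\mathrm{E}(0),
\end{equation*}
so the only task is to bound $\mathrm{E}(0)$ quadratically in $\delta_*$ under the smallness assumption \eqref{eqq-assump-small-initial}. Given this, choosing $\delta_*^2 < \varepsilon_*^2 / C(m,\mu,\omega,r_0)$ yields \eqref{energy-est-uniform-bound-e*}.

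To control $\mathrm{E}(0)=\mathrm{E}^*(0)+\mu^2\|v_{in}\|_{\mathrm{L}^2}^2$, I would estimate each piece of $\mathrm{E}^*(0)$ directly. The terms $\|V_{in}\|_{\mathrm{L}^2}^2$ and $\|W^*_{in}/r\|_{\mathrm{L}^2}^2$ are already bounded by $\delta_*^2$ by hypothesis. For $\|q_{in}\|_{\mathrm{L}^2}^2$, I invoke Lemma 2.4, which gives $\|q_{in}\|_{\mathrm{L}^2}\lesssim \|z_{in}\|_X$ provided $\|z_{in}\|_X$ is below the threshold $\varepsilon_0(m)$ (true once $\delta_*\leq \varepsilon_0$); this requires the orthogonality \eqref{eq-in-orthogonal-z-h_1} which we have assumed. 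Both the $X$-norm of $z$ and the $\mathrm{L}^2(r\,\mathrm{d}r)$-norm of $q$ are scale invariant, so no factor of $\sigma_{in}$ enters here.

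The more delicate piece is $\mu^2\|v_{in}\|_{\mathrm{L}^2}^2$, since by \eqref{eq-v-h_1-z} the pointwise bound $|v_{in}|^2\lesssim \mathbf{h}_1^2(r/\sigma_{in})+|z_{in}(r/\sigma_{in})|^2$ contains the nonvanishing contribution $\mathbf{h}_1^2$, for which $\int_0^\infty \mathbf{h}_1^2(\rho)\,\rho\,\mathrm{d}\rho=:c_m$ is a positive constant (finite because $|m|\geq 3$). The key observation is that after the change of variables $r=\sigma_{in}\rho$,
\begin{equation*}
\int_0^\infty |v_{in}|^2\, r\,\mathrm{d}r\ =\ \sigma_{in}^2\!\int_0^\infty |v_{in}|^2(\sigma_{in}\rho)\,\rho\,\mathrm{d}\rho\ \lesssim\ \sigma_{in}^2\!\left(c_m+\int_0^\infty |z_{in}(\rho)|^2\,\rho\,\mathrm{d}\rho\right),
\end{equation*}
so the $\sigma_{in}^2$ prefactor tames the otherwise $O(1)$ quantity $c_m$. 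Assumption \eqref{eqq-assump-small-initial} controls both $\sigma_{in}$ and $\|z_{in}\|_{\mathrm{L}^2(\rho\,\mathrm{d}\rho)}$ by $\delta_*$, giving $\mu^2\|v_{in}\|_{\mathrm{L}^2}^2\lesssim \mu^2\sigma_{in}^2(1+\delta_*^2)\lesssim \delta_*^2$. The only nontrivial step is this scaling argument for $v_{in}$; it is essential that the smallness of $\sigma_{in}$ be built into \eqref{eqq-assump-small-initial}, otherwise the $\mu^2 c_m$ mass of $|v|^2$ would prevent making $\mathrm{E}(0)$ small.

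Combining the four bounds yields $\mathrm{E}(0)\leq C'(m,\mu,\omega,r_0)\,\delta_*^2$; feeding this into \eqref{eq-energy-est-uniform-bound} and picking $\delta_*$ sufficiently small (depending on $m,\mu,\omega,r_0,\varepsilon_*$, and also small enough to satisfy $\delta_*\leq \varepsilon_0(m)$ so Lemma 2.4 applies) closes the argument.
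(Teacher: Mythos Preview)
Your proposal is correct and follows essentially the same route as the paper's proof: both reduce the corollary to bounding $\mathrm{E}(0)$, handle $\|q_{in}\|_{\mathrm{L}^2}$ via Lemma~2.4, and control $\|v_{in}\|_{\mathrm{L}^2}^2$ by the scaling argument $r=\sigma_{in}\rho$ in \eqref{eq-v-h_1-z}, which is exactly the paper's estimate \eqref{eq-est-L2-v-h_1-z}. Your additional remarks on scale invariance and on why the smallness of $\sigma_{in}$ is essential are not in the paper's terse proof but are accurate and helpful.
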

\begin{proof}[\bf Proof]
In light of the definition of $\mathrm{E}$ in Lemma 3.1 and the energy estimate in Lemma 3.2, we only need to bound the $\mathrm{L}^2$-norms of $v_{in}$ and $q_{in}$. Here $v_{in}$ and $q_{in}$ are initial functions of $v$ and $q$ at $t = 0$. 
By  \eqref{eq-v-h_1-z}, it holds 
\begin{equation}\label{eq-est-L2-v-h_1-z}
  \int_0^{\infty}  | \h{1pt}v_{in}  \h{1pt}|^2\, \lesssim\,    \sigma_{in}^2  \int_0^{\infty}   \big( \h{1pt}\mathbf h_1^2 + |\h{1pt}z_{in} \h{1pt}|^2 \h{1pt}\big) \rho\h{1pt}\mathrm{d}\h{1pt}\rho\,\lesssim \, \sigma_{in}^2  + \sigma_{in}^2  \int_0^{\infty}  |\h{1pt}z_{in}\h{1pt}|^2 \rho \h{1pt}\mathrm{d}\h{1pt}\rho.
\end{equation}
Applying Lemma 2.4 then yields
 \begin{equation*} \|\h{1pt}q_{in}\h{1pt}\|_{\mathrm{L}^2} \h{3pt}\lesssim\h{3pt}\|\h{1pt}z_{in}\h{1pt}\|_X.
\end{equation*}
Therefore by \eqref{eqq-assump-small-initial}, the $\mathrm{L}^2$-norms of $v_{in}$ and $q_{in}$ can be small, provided that $\delta_*$ is suitably small. The proof then follows.
\end{proof} \
\\
\\
 \noindent \textbf{III.2. DECOMPOSITION OF THE VARIABLES $V$ AND $W^*$.}\\
 
In the remainder of this article, we decompose $V $ into $V_{1} + V_{2}$, where $V_{1}$ and $V_{2}$ satisfy respectively the following   initial value problems: 
\begin{equation}\label{eq-for-V_1}
\left\{
\begin{aligned}
 & \p_t V_{1}   = \La_2 V_{1}  \quad&&\text{in $\R^2\times (0,\infty)$};\\[6pt]
  &\h{8pt} V_{1} =  V_{in}   &&\text{at $t=0$;}
\end{aligned}
\right.\end{equation}
and 
\begin{equation}\label{eq-for-V_2} 
\left\{
\begin{aligned}
 &  \p_t V_{2}   = \La_2 V_{2} + \mu \left( \p_r +  {r^{-1}} \right) \big< \h{1pt}q, i \h{0.5pt}v\h{1pt}\big>\qquad&&\mbox{in $\R^2\times(0,\infty)$};\\[6pt]
  & \h{8pt}  V_{2} =  0   &&\mbox{at $t=0$.}
\end{aligned}
\right. 
\end{equation}
A standard application of the heat kernel on $\mathbb{R}^2$ implies the following lemma.  \begin{lemma}\label{lem-est-V_1}
For all $t > 0$, it holds \begin{eqnarray*}
\Big\| \h{1pt} V_{1}\big(\h{0.5pt}  \cdot ,t\h{0.5pt}\big) \h{1pt}\Big\|_{\mathrm{L}^{\infty}}    \h{2pt}\lesssim\h{2pt} t^{- 1 / 2 } \h{1pt}\big\| \h{1pt}V_{in} \h{1pt} \big\|_{\mathrm{L}^2}.
\end{eqnarray*}
\end{lemma}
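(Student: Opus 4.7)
The plan is to exploit the explicit 2D heat kernel representation of $V_{1}$. Since $V_{in}$ is a function of $r$ alone, the rotational symmetry of the Laplacian ensures that $V_{1}(\cdot,t)$ remains radial, and the standard solution formula for the heat equation on $\mathbb{R}^{2}$ gives
\begin{equation*}
V_{1}(x,t) \,=\, \int_{\mathbb{R}^2} G_t(x-y)\, V_{in}(|y|)\, \mathrm dy, \qquad G_t(x)\,:=\,\frac{1}{4\pi t}\h{1pt}e^{-|x|^2/(4t)}.
\end{equation*}

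First I would apply the Cauchy-Schwarz inequality to the convolution, which yields the pointwise bound
\begin{equation*}
|V_{1}(x,t)|\,\leq\,\|G_t\|_{\mathrm{L}^2(\mathbb{R}^2)}\cdot \|V_{in}\|_{\mathrm{L}^2(\mathbb{R}^2)}.
\end{equation*}
A direct Gaussian computation gives $\|G_t\|_{\mathrm{L}^2(\mathbb{R}^2)}^2 = (8\pi t)^{-1}$. Then I would translate between the Cartesian norm $\|\cdot\|_{\mathrm{L}^2(\mathbb{R}^2)}$ and the radial convention $\mathrm{L}^2 = \mathrm{L}^2(r\,\mathrm dr)$ employed throughout the paper; these differ only by a factor of $\sqrt{2\pi}$, so one gets $\|V_{1}(\cdot,t)\|_{\mathrm{L}^\infty}\lesssim t^{-1/2}\|V_{in}\|_{\mathrm{L}^2}$, which is the claimed estimate.

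There is really no serious obstacle here: this is the well-known ultracontractive property of the 2D heat semigroup, realizing $e^{t\Delta_2}\colon \mathrm{L}^2(\mathbb{R}^2)\to \mathrm{L}^\infty(\mathbb{R}^2)$ with operator norm of order $t^{-1/2}$. Equivalently, one may invoke Young's convolution inequality in the borderline form $\|f*g\|_{\mathrm{L}^\infty}\leq \|f\|_{\mathrm{L}^2}\|g\|_{\mathrm{L}^2}$ applied to $f=G_t$ and $g=V_{in}$. The only bookkeeping detail is the radial-measure convention, and no extra ingredient beyond the heat kernel is required, consistent with the authors' remark that the lemma is a standard consequence of the heat kernel on $\mathbb{R}^2$.
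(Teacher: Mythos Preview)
Your proposal is correct and matches the paper's approach: the paper simply states that this lemma follows from ``a standard application of the heat kernel on $\mathbb{R}^2$'' without giving details, and your argument via the convolution representation and Cauchy--Schwarz (equivalently Young's inequality) is precisely that standard application.
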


Now we consider some estimates of $V_2$.  In what follows we use $\mathscr{F}_n$ to denote the standard Fourier transformation on $\mathbb{R}^n$. Firstly we estimate the Fourier transform of $V_2$ on $\mathbb{R}^2$.   
\begin{lemma}\label{lem-est-V_2-fourier}
 For all $t  >0$ and $\xi\in\R^2$,   it holds 
  $$\Big\vert \h{1.5pt}\mathscr{F}_2\h{1pt}[\h{1pt}{V_{2}}\h{1pt}]\big(\xi \h{1pt}\big) \h{1pt}\Big\vert \h{2pt}\lesssim\h{2pt}   |\hspace{1pt}\xi \hspace{1.5pt}| \int_0^t \| \hspace{1pt}q \hspace{1pt}\|_{\mathrm{L}^2}\hspace{1pt}\|\hspace{1pt}v\hspace{1pt}\|_{\mathrm{L}^2}  \h{1pt}\mathrm{d} s.$$ 
\end{lemma}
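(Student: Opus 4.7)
The plan is to use Duhamel's formula together with a Fourier transform, exploiting the fact that the source term in \eqref{eq-for-V_2} has a divergence structure. Since $V_2$ vanishes at $t = 0$, we can express
\begin{equation*}
V_2(\cdot, t) \,=\, \mu \int_0^t e^{(t-s)\Delta_2}\Big[\h{1pt}\big(\p_r + r^{-1}\big)\big\langle q, i\h{0.5pt}v\big\rangle\Big](\cdot, s) \, \mathrm{d}\h{0.5pt}s,
\end{equation*}
where we view the source term as a radial function on $\R^2$.

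The key observation is that for any radial function $g = g(r)$ on $\R^2$, a direct computation gives
\begin{equation*}
\nabla \cdot \big(g(r) \h{1pt}\hat{r}\h{0.5pt}\big) \,=\, \big(\p_r + r^{-1}\big) g,
\end{equation*}
where $\hat{r} := x/|x|$. Hence the source in \eqref{eq-for-V_2} equals $\mu\, \nabla \cdot \big(\langle q, i\h{0.5pt}v\rangle \hat r\big)$. Taking the 2D Fourier transform and using that $\mathscr{F}_2\big[e^{(t-s)\Delta_2} f\big](\xi) = e^{-(t-s)|\xi|^2}\h{0.5pt}\mathscr{F}_2[f](\xi)$ and that $\mathscr{F}_2[\nabla \cdot \vec F](\xi) = i\h{0.5pt}\xi \cdot \mathscr{F}_2[\vec F](\xi)$, we obtain
\begin{equation*}
\mathscr{F}_2[\h{1pt}V_2\h{1pt}]\big(\xi, t\big) \,=\, \mu \int_0^t e^{-(t-s)|\xi|^2}\h{1pt} \big(i\h{0.5pt}\xi\big) \cdot \mathscr{F}_2\big[\h{1pt}\langle q, i\h{0.5pt}v\rangle \h{1pt}\hat r\big]\big(\xi, s\big) \, \mathrm{d}\h{0.5pt}s.
\end{equation*}
Since $e^{-(t-s)|\xi|^2} \leq 1$, this already produces the factor $|\xi|$ on the right-hand side.

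It remains to bound the Fourier transform of the radial vector field by its $\mathrm L^1(\R^2)$ norm and then apply Cauchy--Schwarz. Writing $|\mathscr{F}_2[\langle q, i\h{0.5pt}v\rangle \hat r](\xi, s)| \leq \|\langle q, i\h{0.5pt}v\rangle \hat r\|_{\mathrm L^1(\R^2)} = 2\pi \int_0^\infty |\langle q, i\h{0.5pt}v\rangle|\h{1pt} r\h{1pt}\mathrm d\h{0.5pt} r$ and using Cauchy--Schwarz with respect to the measure $r\h{1pt}\mathrm d\h{0.5pt} r$ yields
\begin{equation*}
\big|\h{1pt}\mathscr{F}_2\big[\langle q, i\h{0.5pt}v\rangle \hat r\big]\big(\xi, s\big)\big| \,\lesssim\, \|\h{1pt}q\h{1pt}\|_{\mathrm L^2} \h{1pt}\|\h{1pt}v\h{1pt}\|_{\mathrm L^2},
\end{equation*}
with the constant absorbing $2\pi$ and $\mu$. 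Combining these estimates gives the claimed bound. There is no serious obstacle here; the only subtle point is recognizing the divergence structure of the source term, which is what produces the gradient factor $|\xi|$ and hence the expected decay in $|\xi|$ for small frequencies later on.
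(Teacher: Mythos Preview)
Your proof is correct and essentially identical to the paper's: both recognize that $(\partial_r + r^{-1})\langle q, i v\rangle = \nabla\cdot\big(x\,r^{-1}\langle q, iv\rangle\big)$, apply Duhamel in Fourier space to extract the factor $|\xi|$, and then bound the remaining Fourier transform by the $\mathrm L^1(\R^2)$ norm followed by Cauchy--Schwarz in $r\,\mathrm d r$.
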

\begin{proof}[\bf Proof] 
Taking Fourier transform $\mathscr{F}_2$ on both sides of the equation \eqref{eq-for-V_2}, we obtain for all $\xi \in \mathbb{R}^2$ that 
\begin{eqnarray*}
  \p_t \hspace{0.5pt}\sF_2 \left[ \h{1pt}V_{2} \h{1pt}\right]+ |\hspace{1pt}\xi\hspace{1pt}|^2 \hspace{1pt}\sF_2 \left[ \h{1pt}V_{2} \h{1pt}\right]= \mu\h{1pt}\mathscr{F}_2 \left[  \hspace{1pt} \left( \p_r + r^{-1} \right) \big< \hspace{0.5pt}q, i\h{0.5pt}v  \hspace{0.5pt}\big> \hspace{1pt}\right].
\end{eqnarray*}
Letting $\nabla^{(2)} $ be the gradient operator on $\mathbb{R}^2$, then we  have
\begin{eqnarray*}
\big( \hspace{1pt}\p_r+r^{-1} \hspace{1pt}\big) \hspace{1pt}\big<\hspace{0.5pt} q , i\h{0.5pt}v \hspace{0.5pt}\big> = \nabla^{(2)} \cdot \big(\hspace{1pt} x \hspace{1pt}r^{-1}\big<\hspace{0.5pt} q , i\h{0.5pt}v \hspace{0.5pt}\big>\hspace{1pt}\big) \quad\hbox{for $x\in\R^2$}.
\end{eqnarray*} 
The above two equalities together with  H\"older's inequality imply   
\begin{eqnarray*}
&&\Big|\hspace{1pt}\sF_2 \left[ \h{1pt}V_2 \h{1pt}\right] \left(\xi\right) \hspace{1pt}\Big|=  \left|  \hspace{2pt}\mu\,\int_0^t e^{-(t-s) \hspace{1pt}|\hspace{1pt}\xi\hspace{1pt}|^2} \sum_{j = 1}^2  \xi_j \hspace{1pt}\mathscr{F}_2\left[ x_j\hspace{1pt}r^{-1} \hspace{1pt}\big<\hspace{0.5pt} q , i\h{0.5pt}v \hspace{0.5pt}\big> \hspace{1pt}\right]\,\mathrm{d} s   \hspace{2pt} \right|  \\[6pt] 
&&\hspace{20pt}\lesssim\hspace{2pt}  |\hspace{1pt}\xi \hspace{1.5pt}| \int_0^t  \big\| \hspace{1.5pt} x \hspace{1pt}r^{-1} \hspace{1pt}\big< \hspace{0.5pt}q , i\h{0.5pt} v  \hspace{0.5pt}\big> \hspace{1.5pt}\big\|_{\mathrm{L}^1(\mathbb{R}^2)} \h{2pt}\mathrm{d} s 
\hspace{2pt} \lesssim\h{2pt} |\hspace{1pt}\xi \hspace{1.5pt}| \int_0^t \int_0^{\infty} | \hspace{1pt}q  \hspace{1pt}|\hspace{1pt}|\hspace{1pt}v \hspace{1pt}| \hspace{1pt}r\hspace{2pt}\mathrm{d} r \h{1pt}\mathrm{d} s 
\hspace{2pt}\lesssim\hspace{2pt}   |\hspace{1pt}\xi \hspace{1.5pt}| \int_0^t \| \hspace{1pt}q  \hspace{1pt}\|_{\mathrm{L}^2}\hspace{1pt}\|\hspace{1pt}v\hspace{1pt}\|_{\mathrm{L}^2}  \h{1pt}\mathrm{d} s.
\end{eqnarray*}
This finishes the proof.  
\end{proof}

Now we give an energy estimate  for $V_2$ based on Schonbek's Fourier splitting method (see \cite{S}). 
\begin{lemma} \label{lem-energy-est-V_2}
 For all $t  >0$ and $\mathrm{R}_* > 0$,    it holds  
 \begin{eqnarray*} 
 \frac{\mathrm{d}}{\mathrm{d} \h{0.5pt}t} \int_0^{\infty }V_2^2 + \mathrm{R}_*^2 \int_0^{\infty} V_2^2  \h{2pt}
 \lesssim\h{2pt} \|\h{1pt}z \h{1pt}\|_X^2 +   \mathrm{R}_*^6 \left(  \int_0^t \| \hspace{1pt}q  \hspace{1pt}\|_{\mathrm{L}^2}\hspace{1pt}\|\hspace{1pt}v\hspace{1pt}\|_{\mathrm{L}^2}  \h{1pt}\mathrm{d} s \right)^2.
 \end{eqnarray*}
\end{lemma}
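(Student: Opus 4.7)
The plan is to derive a standard energy identity for $V_2$, then apply Schonbek's Fourier splitting method using the pointwise Fourier bound from Lemma 3.5.

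First I would multiply the $V_2$-equation in \eqref{eq-for-V_2} by $V_2$ and integrate against $r\,\mathrm d r$ on $(0,\infty)$. Using that $(\partial_r+r^{-1})\langle q,iv\rangle=r^{-1}\partial_r(r\langle q,iv\rangle)$ and integrating by parts (the boundary terms vanish for a classical solution of finite energy), I obtain
\begin{equation*}
\tfrac12\tfrac{\mathrm d}{\mathrm d t}\!\int_0^\infty V_2^2 + \int_0^\infty (\partial_r V_2)^2 \;=\; -\mu\int_0^\infty \langle q,iv\rangle\,\partial_r V_2 \; r\,\mathrm d r.
\end{equation*}
Cauchy--Schwarz together with $|\langle q,iv\rangle|\le|q||v|$ and the pointwise bound $|v|\lesssim 1$ (which follows from \eqref{eq-v-h_1-z} and the smallness $\|z\|_{L^\infty}\le 1/2$, since $\mathbf h_1\le 1$) absorbs one copy of $\int(\partial_r V_2)^2$ into the left-hand side and leaves a remainder controlled by $\mu^2\int_0^\infty |q|^2 r\,\mathrm d r=\mu^2\|q\|_{L^2}^2$. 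Invoking Lemma~2.4, this is $\lesssim\|z\|_X^2$, so
\begin{equation*}
\tfrac{\mathrm d}{\mathrm d t}\!\int_0^\infty V_2^2 + \int_0^\infty (\partial_r V_2)^2 \;\lesssim\; \|z\|_X^2.
\end{equation*}

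Next I would run Schonbek's Fourier splitting. Regarding $V_2$ as a radial function on $\mathbb R^2$ and using Plancherel,
\begin{equation*}
\int_0^\infty (\partial_r V_2)^2\,r\,\mathrm d r \;\simeq\; \int_{\mathbb R^2} |\xi|^2|\mathscr F_2[V_2]|^2\,\mathrm d\xi \;\ge\; \mathrm R_*^2\!\int_{\mathbb R^2}|\mathscr F_2[V_2]|^2 - \mathrm R_*^2\!\int_{|\xi|\le \mathrm R_*}|\mathscr F_2[V_2]|^2,
\end{equation*}
so in the original variables
\begin{equation*}
\int_0^\infty (\partial_r V_2)^2\;\gtrsim\; \mathrm R_*^2\!\int_0^\infty V_2^2 \;-\; \mathrm R_*^2\!\int_{|\xi|\le \mathrm R_*}\bigl|\mathscr F_2[V_2](\xi)\bigr|^2\mathrm d\xi.
\end{equation*}
Inserting this into the previous display and rearranging gives
\begin{equation*}
\tfrac{\mathrm d}{\mathrm d t}\!\int_0^\infty V_2^2 + \mathrm R_*^2\!\int_0^\infty V_2^2 \;\lesssim\; \|z\|_X^2 + \mathrm R_*^2\!\int_{|\xi|\le\mathrm R_*}\bigl|\mathscr F_2[V_2](\xi)\bigr|^2\mathrm d\xi.
\end{equation*}

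Finally I would plug in Lemma~3.5, which yields $|\mathscr F_2[V_2](\xi)|^2\lesssim|\xi|^2\bigl(\int_0^t\|q\|_{L^2}\|v\|_{L^2}\mathrm d s\bigr)^2$, and integrate over the disk $\{|\xi|\le\mathrm R_*\}$: since $\int_{|\xi|\le\mathrm R_*}|\xi|^2\mathrm d\xi\simeq\mathrm R_*^4$, the low-frequency remainder is bounded by $\mathrm R_*^4\bigl(\int_0^t\|q\|_{L^2}\|v\|_{L^2}\mathrm d s\bigr)^2$. Multiplying by the extra $\mathrm R_*^2$ produces the claimed $\mathrm R_*^6$ factor, completing the proof. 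The only mildly delicate step is the pointwise bound $|v|\lesssim 1$ used to dispose of the nonlinear coupling term before passing to $\|z\|_X^2$; everything else is routine integration by parts, Plancherel, and direct insertion of Lemma~3.5.
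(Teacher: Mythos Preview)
Your proof is correct and follows essentially the same route as the paper: energy identity for $V_2$, Young's inequality to absorb $\int(\partial_r V_2)^2$, Schonbek's Fourier splitting via Plancherel, and then Lemma~3.5 on the low-frequency ball. The only cosmetic difference is that the paper bounds $\int_0^\infty|\langle q,iv\rangle|^2$ by $\|z\|_X^2$ through the direct pointwise estimate $|\langle q,iv\rangle|\lesssim\sigma^{-1}(|\partial_\rho z|+|z|/\rho)$ rather than via $|v|\le 1$ and Lemma~2.4, but the effect is identical.
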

\begin{proof}[\bf Proof] 
By Plancherel's  theorem, it turns out
\begin{eqnarray*}
&&  \int_{\R^2} \left|\h{1pt}\D^{(2)}  V_{2}\h{1pt}\right|^2 \mathrm{d} x =  \int_{\R^2} |\h{1pt}\xi\h{1pt}|^2 \h{1pt} \Big|\h{1pt}\mathscr{F}_2 \h{1pt}\big[  V_{2} \big]\h{1.5pt} \Big|^2 \mathrm{d} \h{0.5pt}\xi \\[8pt]
\nonumber&& 
  \h{64pt} \geq \h{2pt}  \mathrm{R}_*^2 \int_{ \R^2 \h{0.5pt}\setminus \h{0.5pt} \mathrm{D}_{\mathrm{R}_*}} \Big|\h{1pt}\mathscr{F}_2 \h{1pt}\big[  V_{2} \big]\h{1.5pt} \Big|^2 \mathrm{d} \h{0.5pt}\xi     =  \mathrm{R}_*^2 \int_{\R^2} V_{2}^2 \h{3pt} \mathrm{d} x  - \mathrm{R}_*^2   \int_{\mathrm{D}_{\mathrm{R}_*}}    \Big|\h{1pt}\mathscr{F}_2 \h{1pt} \big[  V_{2} \big]\h{1.5pt} \Big|^2 \mathrm{d}\h{0.5pt}\xi. 
\end{eqnarray*}
Here $\mathrm{D}_{\mathrm{R}_*}$ is the disk in $\mathbb{R}^2$ with radius $\mathrm{R}_*$ and  center  $0$.  This estimate then yields  
 \begin{equation}\label{eq-est-V_2-plancherel}
 \int_0^{\infty} \big(\h{1pt}\p_r V_{2}\h{1pt}\big)^2 \h{2pt}\geq\h{2pt}  \mathrm{R}_*^2 \int_0^{\infty} V_{2}^2  - \frac{\mathrm{R}_*^2}{2 \h{0.5pt}\pi}   \int_{\mathrm{D}_{\mathrm{R}_*}}    \Big|\h{1pt}\mathscr{F}_2 \h{1pt}\big[  V_{2} \big]\h{1.5pt} \Big|^2 \mathrm{d}\h{0.5pt}\xi 
\end{equation}
since
\begin{eqnarray*}
 \int_{\R^2} \left|\h{1pt}\D^{(2)}  V_{2}\h{1pt}\right|^2 \mathrm{d} x = 2 \h{0.5pt}\pi  \int_0^{\infty} \big(\h{1pt}\p_r V_{2}\h{1pt}\big)^2.
\end{eqnarray*}
Multiplying $V_2$ on both sides of the equation \eqref{eq-for-V_2} and integrating over $ \R^2$, we deduce 
\begin{equation}\label{eq-energy-V_2}
\frac{1}{2} \h{1pt}\frac{\mathrm{d}}{\mathrm{d} \h{0.5pt}t} \int_0^{\infty }V_2^2 + \int_0^{\infty} \big( \p_r \h{0.5pt}V_2 \big)^2 = - \mu \int_0^{\infty} \big< \h{1pt}q, i \h{0.5pt}v \h{1pt}\big> \h{1pt}\p_r \h{0.5pt}V_2,
\end{equation}
which   implies in light of  Young's inequality 
\begin{eqnarray*}
\frac{\mathrm{d}}{\mathrm{d} \h{0.5pt}t} \int_0^{\infty }V_2^2 + \int_0^{\infty} \big( \p_r \h{0.5pt}V_2 \big)^2 \h{2pt}\leq \h{2pt}\mu^2 \int_0^{\infty} \big| \h{1pt}\big< \h{1pt}q, i \h{0.5pt}v \h{1pt}\big> \h{1pt}\big|^2.
\end{eqnarray*}
Applying \eqref{eq-est-V_2-plancherel}
 to the above estimate   yields \begin{eqnarray*}\frac{\mathrm{d}}{\mathrm{d} \h{0.5pt}t} \int_0^{\infty }V_2^2 + \mathrm{R}_*^2 \int_0^{\infty} V_2^2  \h{3pt}\lesssim\h{3pt}\mu^2 \int_0^{\infty} \big| \h{1pt}\big< \h{1pt}q, i \h{0.5pt}v \h{1pt}\big> \h{1pt}\big|^2 + \mathrm{R}_*^2 \int_{\mathrm{D}_{\mathrm{R}_*}}    \Big|\h{1pt}\mathscr{F}_2 \h{1pt}\big[  V_{2} \big]\h{1.5pt} \Big|^2 \mathrm{d}\h{0.5pt}\xi.
\end{eqnarray*}
This combined  with Lemma \ref{lem-est-V_2-fourier}  shows that
 \begin{align*}  
 \frac{\mathrm{d}}{\mathrm{d} \h{0.5pt}t} \int_0^{\infty }V_2^2 + \mathrm{R}_*^2 \int_0^{\infty} V_2^2  \h{3pt}
 &\lesssim\h{3pt} \int_0^{\infty} \big| \h{1pt}\big< \h{1pt}q, i \h{0.5pt}v \h{1pt}\big> \h{1pt}\big|^2 +  \mathrm{R}_*^6 \left(  \int_0^t \| \hspace{1pt}q  \hspace{1pt}\|_{\mathrm{L}^2}\hspace{1pt}\|\hspace{1pt}v\hspace{1pt}\|_{\mathrm{L}^2}  \h{1pt}\mathrm{d} s \right)^2\\[4pt]
&\lesssim \h{2pt}  \int_0^{\infty} \left( \big| \h{1pt}\p_{\rho} \h{0.5pt}z \h{1pt}\big|^2 + \frac{\h{1pt}|\h{1pt}z \h{1pt}|^2}{\rho^2}  \right)\rho  \h{1pt} \mathrm{d}  \h{0.5pt} \rho +  \mathrm{R}_*^6 \left(  \int_0^t \| \hspace{1pt}q  \hspace{1pt}\|_{\mathrm{L}^2}\hspace{1pt}\|\hspace{1pt}v\hspace{1pt}\|_{\mathrm{L}^2}  \h{1pt}\mathrm{d} s \right)^2\\[4pt]
&=\h{2pt}  \|\h{1pt}z \h{1pt}\|_X^2 +   \mathrm{R}_*^6 \left(  \int_0^t \| \hspace{1pt}q  \hspace{1pt}\|_{\mathrm{L}^2}\hspace{1pt}\|\hspace{1pt}v\hspace{1pt}\|_{\mathrm{L}^2}  \h{1pt}\mathrm{d} s \right)^2. 
\end{align*}
In the second estimate above we have used the fact that  
$$ \big|\h{1pt}\big< \h{1pt}q, i \h{0.5pt}v \h{1pt}\big> \h{1pt}\big|\h{2pt} \lesssim\h{2pt}  \sigma^{-1} \left( \big|\h{1pt}\p_\rho z \h{1pt}\big| + \frac{|\h{1pt}z\h{1pt}|}{\rho}\right)   .   $$
The proof then follows.
\end{proof}

In light of    \eqref{eq-W*}, it turns out that $W^*\h{0.5pt}\big/\h{0.5pt}r^2$ satisfies
 \begin{equation*}
 \p_t\left( \frac{W^*}{r^2}\right)\, =\, \La_4\left( \frac{W^*}{r^2}\right) \h{2pt}+\h{2pt}   {m}\h{1pt}r^{-2}\h{1pt}\left( \p_r+r^{-1}\right)\h{1pt}  \big< \h{1pt}q, i \h{0.5pt}v\h{1pt}\big> \quad\quad\mbox{on $\R^4\times(0,\infty)$},
 \end{equation*}
 where $\La_4$ is the Laplace operator on $\R^4.$  
Then  we decompose  $W^* $ into  $W^*_{1} + W^*_{2} $,  where $W^*_{1}\h{0.5pt}\big/\h{0.5pt} r^2$ and $W^*_{2}\h{0.5pt}\big/ \h{0.5pt}r^2$ satisfy respectively the following   initial value problems:      
\begin{equation}\label{eq-W^*_1}
 \left\{
 \begin{aligned}
 \p_t\left( \frac{W^*_1}{r^2}\right) \,\,=\,\,\La_4\left( \frac{W^*_1}{r^2}\right) \h{2pt}  \quad\,\, &\mbox{in $\R^4\times(0,\infty)$};\\[3pt]
  {W^*_1}\big/ {r^2} =  {W^*_{in}}\,\big/{r^2}\qquad\quad&\mbox{at $ t=0$};
 \end{aligned}
 \right.
 \end{equation}
   and 
   \begin{equation}\label{eq-W^*_2}
 \left\{
 \begin{aligned}
 \p_t\left( \frac{W^*_2}{r^2}\right) \h{2pt}=\h{2pt} \La_4\left( \frac{W^*_2}{r^2}\right) \,+\,  {m}\h{1pt}r^{-2}\h{1pt}\left( \p_r+r^{-1}\right)\h{1pt}  \big< \h{1pt}q, i \h{0.5pt}v\h{1pt}\big> \quad &\mbox{in $\R^4\times(0,\infty)$};\\[3pt]
   {W^*_2}\big/{r^2}\, =\,0\h{172pt}\,\,&\mbox{at $ t=0$}.
        \end{aligned}
        \right.
   \end{equation}
By a standard application of the heat kernel on $\mathbb{R}^4$,   the following estimate holds for $W^*_{1}\h{0.5pt}\big/\h{0.5pt} r^2$:
   \begin{lemma}\label{lem-est-W^*_1}
For all $t > 0$, it holds 
\begin{eqnarray*}
\Big\| \h{1pt} W^*_{1}\h{1pt}\big/ r^2\big(\h{0.5pt}\cdot, t \h{0.5pt}\big) \h{1pt}\Big\|_{\mathrm{L}^{\infty}}    \,\,\lesssim\,\,t^{- 1} \h{1pt}\Big\| \h{1pt} W^*_{in}\h{1pt}\big/ r \h{1pt} \Big\|_{\mathrm{L}^2}.
\end{eqnarray*}
\end{lemma}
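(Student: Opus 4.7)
The plan is to exploit the fact that $W^*_1/r^2$ solves the pure heat equation on $\R^4$ (with no source), so I can write it explicitly via the four-dimensional heat semigroup acting on the initial profile:
\begin{equation*}
\frac{W^*_1(r,t)}{r^2} \,=\, \bigl(\,e^{t\Delta_4}\,g_{in}\,\bigr)(r), \qquad g_{in}(r):=\frac{W^*_{in}(r)}{r^2}.
\end{equation*}
Since the problem is radial, I do not need to worry about angular integration beyond the standard $|\SSB^3|$ factor.

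The core of the proof is the classical $\mathrm{L}^2\!\to\!\mathrm{L}^\infty$ smoothing estimate of the Gaussian heat kernel on $\R^4$,
\begin{equation*}
\bigl\|\,e^{t\Delta_4}g_{in}\,\bigr\|_{\mathrm{L}^\infty(\R^4)} \,\lesssim\, t^{-n/4}\,\|g_{in}\|_{\mathrm{L}^2(\R^4)} \,=\, t^{-1}\,\|g_{in}\|_{\mathrm{L}^2(\R^4)},
\end{equation*}
which I would derive by the usual Young inequality $\|K_t\ast g_{in}\|_{\mathrm{L}^\infty}\le \|K_t\|_{\mathrm{L}^2}\|g_{in}\|_{\mathrm{L}^2}$ with the heat kernel $K_t(x)=(4\pi t)^{-2}\exp(-|x|^2/4t)$ on $\R^4$. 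It only remains to translate the $\mathrm{L}^2(\R^4)$ norm on the right-hand side into the weighted $\rL^2=\rL^2(r\,\mathrm dr)$ norm used in the statement. For a radial function the volume element on $\R^4$ gives
\begin{equation*}
\|g_{in}\|_{\mathrm{L}^2(\R^4)}^{\,2} \,=\, 2\pi^{2}\!\int_0^\infty \!\Bigl(\tfrac{W^*_{in}}{r^2}\Bigr)^{\!2} r^3\,\mathrm dr \,=\, 2\pi^{2}\!\int_0^\infty \!\Bigl(\tfrac{W^*_{in}}{r}\Bigr)^{\!2} r\,\mathrm dr \,=\, 2\pi^{2}\,\Bigl\|\tfrac{W^*_{in}}{r}\Bigr\|_{\rL^2}^{\,2},
\end{equation*}
so the conversion between the two norms costs only a universal constant. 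Combining the semigroup estimate with this identity yields exactly the desired bound $\|W^*_1/r^2\|_{\mathrm{L}^\infty}\lesssim t^{-1}\|W^*_{in}/r\|_{\rL^2}$.

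There is essentially no analytical obstacle here: the lemma is a direct consequence of the four-dimensional heat kernel decay, and the only care required is the bookkeeping between the intrinsically four-dimensional $\mathrm{L}^2(\R^4)$ smoothing estimate and the one-dimensional $\rL^2(r\,\mathrm dr)$ norm in the statement. The exponent $4$ is exactly what produces the optimal $t^{-1}$ decay (as opposed to $t^{-1/2}$ from the two-dimensional analysis that was used for $V_1$ in Lemma~3.4), which is precisely why recasting the $W^*$-equation on $\R^4$ via the substitution $W^*/r^2$ is the correct device.
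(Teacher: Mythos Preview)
Your proof is correct and is precisely the ``standard application of the heat kernel on $\mathbb{R}^4$'' that the paper invokes without further detail. The $\mathrm{L}^2(\R^4)\to\mathrm{L}^\infty$ smoothing via Young's inequality together with the radial norm conversion $\|W^*_{in}/r^2\|_{\mathrm{L}^2(\R^4)}^2 = 2\pi^2\|W^*_{in}/r\|_{\rL^2}^2$ is exactly the intended argument.
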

 Moreover we also have the following lemma concerning   the Fourier transform of $W^*_{2}\big/ r^2$ on $\mathbb{R}^4$:     
\begin{lemma}\label{lem-est-W^*_2-fourier}
For all $t > 0$ and $\eta\in \R^4$,  it holds  
  $$\Big\vert \h{1.5pt}\mathscr{F}_4\h{1pt}\big[\h{1pt}{W^*_{2}\big/ r^2}\h{1pt}\big]\big(\eta \h{1pt}\big) \h{1pt}\Big\vert \h{2pt}\lesssim\h{2pt}  |\hspace{1pt}\eta \hspace{1.5pt}| \int_0^t \| \hspace{1pt}q \hspace{1pt}\|_{\mathrm{L}^2}\hspace{1pt}\|\hspace{1pt}v\hspace{1pt}\|_{\mathrm{L}^2}  \h{1pt}\mathrm{d} s.$$ 
\end{lemma}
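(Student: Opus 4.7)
The plan is to mimic the argument of Lemma \ref{lem-est-V_2-fourier}, but work in $\mathbb{R}^4$ where the relevant four--dimensional Laplacian lives. The key observation is that the inhomogeneous term in \eqref{eq-W^*_2} can be written as a four--dimensional divergence of a radial vector field, which produces the factor $|\eta|$ after taking the Fourier transform.

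First I would verify the divergence representation. For any radial function $F = F(r)$ on $\mathbb{R}^4$, a direct computation gives
\[
 \nabla^{(4)} \cdot \left( \frac{x\h{1pt} F(r)}{r^3} \right) \h{2pt}=\h{2pt}   \frac{F(r)}{r^3} \h{2pt}+\h{2pt} \frac{F'(r)}{r^2} \h{2pt}=\h{2pt}r^{-2} \h{1pt}\big( \p_r + r^{-1} \big) \h{1pt} F(r)  \qquad \text{for $x \in \R^4$.}
\]
Applying this to $F = \big< \h{1pt} q, i \h{0.5pt} v \h{1pt}\big>$, the source term in \eqref{eq-W^*_2} becomes $m \h{1pt}\nabla^{(4)} \cdot \big( x \h{0.5pt}r^{-3} \h{1pt}\big< \h{1pt}q, i\h{0.5pt}v \h{1pt} \big> \big)$.

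Next, I take the four--dimensional Fourier transform $\mathscr{F}_4$ on both sides of \eqref{eq-W^*_2}. The Laplacian produces $-|\eta|^2$, and the divergence form of the forcing yields a factor $i \eta_j$ on each component. Duhamel's principle then gives
\[
\mathscr{F}_4\big[\h{0.5pt} W^*_2 \big/ r^2 \h{0.5pt}\big] (\eta, t) \h{2pt}=\h{2pt} m \int_0^t e^{- (t - s)  |\eta|^2} \sum_{j=1}^4 i \h{1pt}\eta_j  \h{1pt}  \mathscr{F}_4 \left[ \h{1pt} x_j \h{1pt} r^{-3} \h{1pt} \big< \h{1pt}q, i \h{0.5pt}v\h{1pt}\big> \h{1pt} \right] \h{1.5pt} \mathrm{d}  s,
\]
since the initial data vanishes. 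Bounding the Fourier transform by the $\mathrm{L}^1(\mathbb{R}^4)$-norm of its argument, I obtain
\[
\Big| \h{1.5pt}\mathscr{F}_4\h{1pt}\big[\h{1pt}W^*_2 \big/ r^2\h{1pt}\big](\eta, t) \h{1.5pt}\Big| \h{2pt} \lesssim \h{2pt} |\h{1pt}\eta\h{1.5pt}| \int_0^t \left\| \h{1pt} x \h{1pt}r^{-3}\h{1pt} \big< \h{0.5pt}q, i \h{0.5pt}v\h{0.5pt}\big> \h{1pt}\right\|_{\mathrm{L}^1(\R^4)} \h{1.5pt}\mathrm{d} s.
\]

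Finally I reduce the $\mathbb{R}^4$ norm to a radial integral: using $|x| = r$ and the volume element $|S^3| \h{1pt} r^3 \h{1pt} \mathrm{d} r$,
\[
\left\| \h{1pt} x \h{1pt} r^{-3} \h{1pt} \big< \h{0.5pt}q, i \h{0.5pt}v\h{0.5pt}\big>  \h{1pt}\right\|_{\mathrm{L}^1(\R^4)} \h{2pt} \lesssim \h{2pt}\int_0^\infty \big|\h{1pt}q\h{1pt}\big|\h{1pt} \big|\h{1pt}v\h{1pt}\big| \h{1pt}r \h{1.5pt}\mathrm{d}\h{1pt}r \h{2pt}\leq \h{2pt}\|\h{1pt}q\h{1pt}\|_{\mathrm{L}^2} \h{1pt} \|\h{1pt}v\h{1pt}\|_{\mathrm{L}^2},
\]
where Cauchy--Schwarz is applied in the last step. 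Combining these bounds yields the claimed inequality. The only non-routine point is the divergence identity in $\mathbb{R}^4$, which pins down the exponent $r^{-3}$ so that the four--dimensional divergence matches the radial operator $r^{-2}(\p_r + r^{-1})$ appearing in \eqref{eq-W^*_2}; everything else is a direct transcription of the argument used for $V_2$.
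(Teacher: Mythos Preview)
Your proof is correct and follows essentially the same approach as the paper: rewrite the source term as a four--dimensional divergence $m\,\nabla^{(4)}\cdot\big(x\,r^{-3}\langle q,iv\rangle\big)$, apply Duhamel's formula to the Fourier--transformed equation, bound the Fourier transform by the $\mathrm{L}^1(\R^4)$ norm, and finish with H\"older. The only difference is cosmetic --- you spell out the divergence computation explicitly, whereas the paper simply states the identity.
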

\begin{proof}[\bf Proof] 
Taking Fourier transform $\mathscr{F}_4$ on both sides of  \eqref{eq-W^*_2}, we have, for all $\eta \in \mathbb{R}^4$, that   
\begin{equation*}
  \p_t \hspace{0.5pt}\sF_4 \big[ \h{1pt} {W^*_{2}\big/ r^2} \h{1pt}\big]+ |\hspace{1pt}\eta\hspace{1pt}|^2 \hspace{1pt}\sF_4 \big[ \h{1pt} {W^*_{2}\big/ r^2} \h{1pt}\big]= m\h{1pt}\mathscr{F}_4 \big[  \hspace{1pt} r^{-2} \left( \p_r + r^{-1} \right) \big< \hspace{0.5pt}q, i\h{0.5pt}v  \hspace{0.5pt}\big> \hspace{1pt}\big].
\end{equation*}
For all $x\in\R^4$ and  $r=|\h{0.5pt}x\h{0.5pt}|$, it satisfies
\begin{eqnarray*}
r^{-2}\h{1pt}\big( \hspace{1pt}\p_r+r^{-1} \hspace{1pt}\big) \hspace{1pt}\big<\hspace{0.5pt} q , i\h{0.5pt}v \hspace{0.5pt}\big> 
= \nabla^{(4)} \cdot \Big(\hspace{1pt} x\hspace{1pt}r^{-3}\big<\hspace{0.5pt} q , i\h{0.5pt}v \hspace{0.5pt}\big>\hspace{1pt}\Big),
\end{eqnarray*} 
where $\D^{(4)}$ denotes the gradient operator on $\R^4$. 
 Then the above two equalities  yield  
\begin{eqnarray*}
&&\Big|\hspace{1pt}\sF_4 \left[ \h{1pt}{W^*_{2}\big/ r^2}\h{1pt}\right] \left(\eta\right)\hspace{1pt}\Big|=   \left| \hspace{2pt}m\,\int_0^t e^{-(t-s) \hspace{1pt}|\hspace{1pt}\eta\hspace{1pt}|^2} \sum_{j = 1}^4  \eta_j \hspace{1pt}\mathscr{F}_4\left[ x_j\hspace{1pt}r^{-3 } \hspace{1pt}\big<\hspace{0.5pt} q , i\h{0.5pt}v \hspace{0.5pt}\big> \hspace{1pt}\right]   \h{1pt}\mathrm{d} s  \hspace{2pt} \right|  \\[6pt] 
&&\hspace{70pt}\lesssim\hspace{2pt}  |\hspace{1pt}\eta\hspace{1.5pt}| \int_0^t  \big\| \hspace{1.5pt} x \hspace{1pt}r^{-3} \hspace{1pt}\big< \hspace{0.5pt}q , i\h{0.5pt} v  \hspace{0.5pt}\big> \hspace{1.5pt}\big\|_{\mathrm{L}^1\left(\mathbb{R}^4\right)} \h{2pt}\mathrm{d} s  
\h{2pt} \lesssim\h{2pt} |\hspace{1pt}\eta\hspace{1.5pt}| \int_0^t \int_0^{\infty} | \hspace{1pt}q  \hspace{1pt}|\hspace{1pt}|\hspace{1pt}v \hspace{1pt}| \hspace{1pt}r\hspace{2pt}\mathrm{d} r \h{1pt}\mathrm{d} s.
\end{eqnarray*}
This finishes the proof by H\"older's inequality.  
\end{proof}

 Based on Schonbek's Fourier splitting method, we have the following energy estimate  for ${W^*_{2}\h{1pt}\big/ \h{1pt}r^2}$:

\begin{lemma} \label{lem-energy-est-W^*_2}
For all $t  >0$ and $\mathrm{R}_* > 0$,   it holds  
 \begin{eqnarray*} 
 \frac{\mathrm{d}}{\mathrm{d} \h{0.5pt}t} \int_0^{\infty } \frac{\left(W^*_{2}\right)^2}{ r^2 }  + \mathrm{R}_*^2 \int_0^{\infty}\frac{\left(W^*_{2}\right)^2}{ r^2 }   \h{2pt}
 \lesssim \h{2pt}  \int_0^\infty\,\frac{\left| \h{1pt}q \h{1pt}\right|^2}{r^2}\,  +  \mathrm{R}_*^8 \left(  \int_0^t \| \hspace{1pt}q  \hspace{1pt}\|_{\mathrm{L}^2}\hspace{1pt}\|\hspace{1pt}v\hspace{1pt}\|_{\mathrm{L}^2}  \h{1pt}\mathrm{d} s \right)^2.
 \end{eqnarray*}
\end{lemma}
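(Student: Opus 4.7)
The plan is to mirror the argument of Lemma \ref{lem-energy-est-V_2} for $V_{2}$, but carried out in $\mathbb{R}^4$ instead of $\mathbb{R}^2$, because \eqref{eq-W^*_2} realizes $W_2^*/r^2$ as a radial solution of a heat equation on $\mathbb{R}^4$ with a divergence-type forcing. Since the unknown is radial, the $\mathbb{R}^4$ volume element $r^3\,dr$ paired with the $r^{-2}$-weight built into $W_2^*/r^2$ reproduces the paper's convention $\int_0^\infty\cdots=\int_0^\infty\cdots r\,dr$ after the appropriate cancellations.

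First I would derive the base energy identity by multiplying \eqref{eq-W^*_2} by $W_2^*/r^2$ and integrating over $\mathbb{R}^4$. Using that the forcing term is $m\,\nabla^{(4)}\!\cdot\!\bigl(xr^{-3}\langle q,iv\rangle\bigr)$, an integration by parts gives
\[
\tfrac{1}{2}\tfrac{d}{dt}\!\int_{\mathbb{R}^4}\!(W_2^*/r^2)^2\,dx+\int_{\mathbb{R}^4}\!\bigl|\nabla^{(4)}(W_2^*/r^2)\bigr|^2\,dx\,=\,-m\int_{\mathbb{R}^4}\!\nabla^{(4)}(W_2^*/r^2)\cdot xr^{-3}\langle q,iv\rangle\,dx.
\]
A Young's inequality absorbs half of the $\mathbb{R}^4$ Dirichlet term on the left and leaves a remainder $\lesssim \int_{\mathbb{R}^4} r^{-4}|\langle q,iv\rangle|^2\,dx$. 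Because $|v|\leq 1$ (from \eqref{eq-def-v}, since $v_1$ and $v_2$ are inner products of the unit vector $\mathbf{e}_3$ with unit vectors on $T_\varphi\mathbb{S}^2$), this rewrites as $\lesssim \int_0^\infty |q|^2/r^2$, matching exactly the first term in the target.

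Next I apply Schonbek's Fourier-splitting trick as in Lemma \ref{lem-energy-est-V_2}. By Plancherel in $\mathbb{R}^4$,
\[
\int_{\mathbb{R}^4}\bigl|\nabla^{(4)}(W_2^*/r^2)\bigr|^2\,dx\,\geq\,\mathrm{R}_*^2\!\int_{\mathbb{R}^4}(W_2^*/r^2)^2\,dx\,-\,\mathrm{R}_*^2\!\int_{D_{\mathrm{R}_*}}\bigl|\mathscr{F}_4[W_2^*/r^2]\bigr|^2\,d\eta,
\]
and the low-frequency correction is bounded using Lemma \ref{lem-est-W^*_2-fourier}. Since the $\mathbb{R}^4$-ball $D_{\mathrm{R}_*}$ has measure $\sim \mathrm{R}_*^4$, integrating $|\eta|^2$ against the pointwise Fourier bound produces a factor $\mathrm{R}_*^{4+2}=\mathrm{R}_*^6\,\bigl(\int_0^t\|q\|_{\mathrm{L}^2}\|v\|_{\mathrm{L}^2}\,ds\bigr)^2$, and the prefactor $\mathrm{R}_*^2$ promotes this to the claimed $\mathrm{R}_*^8$-term; this is the only place where the dimensional jump from $\mathbb{R}^2$ to $\mathbb{R}^4$ changes the quantitative bound, raising the $\mathrm{R}_*^6$ of Lemma \ref{lem-energy-est-V_2} to $\mathrm{R}_*^8$. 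Assembling the two parts and converting back to radial integrals via the convention above yields the stated inequality.

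No significant obstacle is expected; the only delicate point is the bookkeeping that the $r^3\,dr$ measure on $\mathbb{R}^4$ combined with the $r^{-2}$ and $r^{-3}$ weights in the source cleanly produces the $\int|q|^2/r^2$ bound under the paper's integration convention, and that the Fourier-splitting dimensional count really yields $\mathrm{R}_*^8$ rather than $\mathrm{R}_*^6$.
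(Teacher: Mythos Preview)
Your proposal is correct and follows essentially the same route as the paper: derive the energy identity for $W_2^*/r^2$ on $\mathbb{R}^4$, apply Young's inequality together with $|v|\leq 1$ to produce the $\int_0^\infty |q|^2/r^2$ term, and then use Schonbek's Fourier splitting in $\mathbb{R}^4$ combined with Lemma~\ref{lem-est-W^*_2-fourier} to obtain the $\mathrm{R}_*^8$ term. Your dimensional bookkeeping for the low-frequency piece and the conversion between $r^3\,dr$ and the paper's $r\,dr$ convention are both accurate.
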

\begin{proof}[\bf Proof]
Similarly   as  in the proof of  Lemma \ref{lem-energy-est-V_2}, it holds
\begin{eqnarray*}
&&  \int_{\R^4} \left|\h{1pt}\D^{(4)}\left( \frac{W^*_2}{r^2}\right) \h{1pt}\right|^2 \mathrm{d} x =  \int_{\R^4} |\h{1pt}\eta\h{1pt}|^2 \h{1pt} \left|\h{1pt}\mathscr{F}_4 \h{1pt}\left[ \left( \frac{W^*_2}{r^2}\right)  \right]\h{1.5pt} \right|^2 \mathrm{d} \h{0.5pt}\eta \\[8pt]
\nonumber&&  \h{100pt} \geq \h{2pt}  \mathrm{R}_*^2 \int_{\R^4} \left( \frac{W^*_2}{r^2}\right) ^2 \h{3pt} \mathrm{d} x  - \mathrm{R}_*^2  \int_{\mathrm{B}_{\mathrm{R}_*}}    \left|\h{1pt}\mathscr{F}_4 \h{1pt} \left[   \frac{W^*_2}{r^2} \right]\h{1.5pt} \right|^2 \mathrm{d}\h{0.5pt}\eta. 
\end{eqnarray*}
Here $\mathrm{B}_{\mathrm{R}_*}\subset \R^4$ is  the ball of radius $\mathrm{R}_*$ and  center  $0.$  Denoting by $\omega_4$ the surface area of the unit sphere in $\R^4$, we have
   \begin{eqnarray}\label{eq-grad-W^*_2}
  \int_{\R^4} \left|\h{1pt}\D^{(4)}\left( \frac{W^*_2}{r^2}\right) \h{1pt}\right|^2 \mathrm{d} x\,=\, \omega_4 \int_0^{\infty} \left| \h{1pt}\p_r\left(\frac{W^*_2}{r^2}\right)\right|^2\h{1pt} r^3 \h{0.5pt} \mathrm{d}\h{0.5pt} r\, \,= \, \omega_4 \h{1pt}  \int_0^{\infty}   \frac{ \big( \p_r W^*_2 \big)^2}{r^2}.
  \end{eqnarray}
  Then the above two estimates, together with an use of  Lemma \ref{lem-est-W^*_2-fourier},  yield
\begin{eqnarray}\label{eq-W^*_2-fourier}
\begin{aligned}
  -  \int_0^{\infty} \left| \h{1pt}\p_r\left(\frac{W^*_2}{r^2}\right)\right|^2\h{1pt} r^3 \h{0.5pt} \mathrm{d}\h{0.5pt} r
\,+\,   \mathrm{R}_*^2   \int_0^{\infty}  \frac{\left(W^*_2\right)^2}{r^2}  \h{3pt}     
&\lesssim\h{3pt} \mathrm{R}_*^2  \int_{\mathrm{B}_{\mathrm{R}_*}}    \left|\h{1pt}\mathscr{F}_4 \h{1pt} \left[   \frac{W^*_2}{r^2} \right]\h{1.5pt} \right|^2 \mathrm{d}\h{0.5pt}\eta  \h{3pt}\lesssim\h{3pt}     \mathrm{R}_*^8   \left(  \int_0^t \| \hspace{1pt}q \hspace{1pt}\|_{\mathrm{L}^2}\hspace{1pt}\|\hspace{1pt}v\hspace{1pt}\|_{\mathrm{L}^2}  \h{1pt}\mathrm{d} s\right)^2. 
\end{aligned}
\end{eqnarray}Multiplying ${W^*_{2}\big/ r^2}$ on both sides of the equation \eqref{eq-W^*_2} and integrating over $\R^4$, we obtain 
\begin{equation}\label{eq-energy-W^*_2}
\frac{1}{2} \h{1pt}\frac{\mathrm{d}}{\mathrm{d} \h{0.5pt}t} \int_0^{\infty } \frac{\left(W^*_{2}\right)^2}{ r^2 }  
\,+\, \int_0^{\infty}   \left| \h{1pt}\p_r\left(\frac{W^*_2}{r^2}\right)\right|^2\h{1pt} r^3 \h{0.5pt} \mathrm{d}\h{0.5pt} r  
= - m \int_0^{\infty} \h{1pt}\p_r\left(\frac{W^*_2}{r^2}\right) \h{1pt}\big< \h{1pt}q, i \h{0.5pt}v \h{1pt}\big> .
\end{equation} 
By  Young's inequality and the fact that $|\h{0.5pt}v\h{0.5pt}|\leq1$, it follows 
\begin{equation}\label{eq-energy-est-W^*_2}
\frac{\mathrm{d}}{\mathrm{d} \h{0.5pt}t} \int_0^{\infty } \frac{\left(W^*_2\right)^2}{r^2} 
\,+\, \int_0^{\infty}   \left| \h{1pt}\p_r\left(\frac{W^*_2}{r^2}\right)\right|^2\h{1pt} r^3 \h{0.5pt} \mathrm{d}\h{0.5pt} r
 \h{2pt}\leq \h{2pt}m^2 \int_0^{\infty}   \left| \frac{1}{r}\h{1pt}\big< \h{1pt}q, i \h{0.5pt}v \h{1pt}\big> \h{1pt}\right|^2\h{1pt}
  \h{2pt} \leq  \h{2pt} m^2  \h{0.5pt}  \h{0.5pt} \int_0^\infty\,\frac{\left| \h{1pt}q \h{1pt}\right|^2}{r^2}.\,  
\end{equation} The proof then follows by adding
 \eqref{eq-W^*_2-fourier} to the estimate \eqref{eq-energy-est-W^*_2}. \end{proof}
\
\\
\\  
 \noindent \textbf{III.3. ESTIMATE FOR MODULATION PARAMETERS.}\\
   
 Now we study the modulation   parameters   $\left(\sigma\h{0.5pt},\h{0.5pt} \Theta\right)$. 
  \begin{lemma}\label{lem-est-modutaion-parameters}Suppose that Corollary \ref{cor-energy-est-uniform-bound} holds with a small positive constant $\e_*$.   Moreover we assume that $z$ is orthogonal to $\mathbf{h}_1$ in the sense of  \eqref{eq-orthogonal-z-h_1}. Then the following estimates hold  for the modulation  parameters $\left(\sigma\h{0.5pt},\h{0.5pt} \Theta\right)$:  
  \begin{equation}\label{eq-modulation-est-theta}
  \begin{aligned}
  \|\h{1pt}z \h{1pt}\|_X \left| \h{1pt}\Theta'\h{2pt}\right| \h{2pt}&\lesssim \h{2pt}   \|\h{1pt}z\h{1pt}\|_X \h{1.5pt} t^{- 1/2}  +  \| \h{1pt}z \h{1pt}\|_X^2 + \epsilon_* \h{1pt} \sigma^{-2}\h{0.5pt}\|\h{1pt}z \h{1pt}\|_X^2  +  \|\h{1pt}z\h{1pt}\|_X^2\h{1pt} \left|\h{2pt}   \frac{\sigma'}{\sigma} + \frac{\mu ^2}{m^2}  \h{2pt}\right|
   +  \epsilon_*^{-1}\int_0^{\infty} \,V^2_2  + \frac{\big( \p_r W^* \big)^2}{r^2 },
  \end{aligned}
\end{equation}
and
\begin{equation}\label{eq-modulation-est-sigma}
\left|\h{1pt}\frac{\sigma'}{\sigma} + \frac{\mu^2}{m^2} \h{1pt}\right| \h{2pt}
\lesssim  \h{2pt}  \| \h{1pt}z \h{1pt}\|_X +   \|\h{1pt}z\h{1pt}\|_X \h{1.5pt} t^{- 1/2}  +  \sigma^{-2}\h{0.5pt}\|\h{1pt}z \h{1pt}\|_X^2   +  \epsilon_*^{-1} \int_0^{\infty}  \,V^2_2  +    \frac{\big( \p_r W^* \big)^2}{r^2 }.
\end{equation}
\end{lemma}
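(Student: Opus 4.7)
The plan is to derive algebraic identities for the modulation parameters by projecting the $z$-equation (2.7) against $\mathbf{h}_1$. First I would differentiate the orthogonality constraint $\int_0^\infty z\,\mathbf{h}_1\,\rho\,\mathrm{d}\rho=0$ in time, obtaining $\int_0^\infty \p_t z\cdot\mathbf{h}_1\,\rho\,\mathrm{d}\rho=0$. Substituting the evolution equation for $z$ and using that $N=\rL_{\mathbf{h}}^*\rL_{\mathbf{h}}$ is self-adjoint on $\rL^2(\rho\,\mathrm{d}\rho)$ with $\rL_{\mathbf{h}}\mathbf{h}_1=0$ eliminates the $\sigma^{-2}Nz$ contribution, leaving
\[
\int_0^\infty \big(\mathrm{Mod}+\mathrm{HT}\big)\mathbf{h}_1\,\rho\,\mathrm{d}\rho=0.
\]
Splitting this complex identity into real and imaginary parts yields two coupled real equations: the real part has $-\Theta'\int(1+\gamma)\mathbf{h}_1^2\,\rho\,\mathrm{d}\rho\sim-\Theta'\|\mathbf{h}_1\|_{\rL^2}^2$ as leading term; the imaginary part has $(\sigma'/\sigma)\cdot m\|\mathbf{h}_1\|_{\rL^2}^2+\mu^2\int\mathbf{h}_1^2\mathbf{h}_3\,\rho\,\mathrm{d}\rho$ as leading terms.

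The key algebraic step is to show how $\mu^2/m^2$ emerges in the $\sigma$-equation. Exploiting $\rL_{\mathbf{h}}\mathbf{h}_1=0$, which means $\rho\mathbf{h}_1'=-m\mathbf{h}_3\mathbf{h}_1$, one integration by parts yields
\[
\int_0^\infty \mathbf{h}_1^2\mathbf{h}_3\,\rho\,\mathrm{d}\rho \,=\, -\frac{1}{m}\int_0^\infty \mathbf{h}_1(\rho\mathbf{h}_1')\rho\,\mathrm{d}\rho \,=\, \frac{1}{m}\|\mathbf{h}_1\|_{\rL^2}^2,
\]
so the two leading imaginary contributions collapse into $m\|\mathbf{h}_1\|_{\rL^2}^2\bigl(\sigma'/\sigma+\mu^2/m^2\bigr)$, producing the cancellation structure of (3.20).

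All remaining contributions are treated as errors. For the $V$-terms $\mu\int V(1+\gamma)\mathbf{h}_1^2\,\rho\,\mathrm{d}\rho$ I decompose $V=V_1+V_2$: Lemma 3.4 gives $\|V_1\|_{\rL^\infty}\lesssim t^{-1/2}$, which yields the $\|z\|_X t^{-1/2}$ term after multiplication by $\|z\|_X$; for $V_2$, Cauchy--Schwarz against $\|\mathbf{h}_1^2\|_{\rL^2(\rho\,\mathrm{d}\rho)}\lesssim 1$ followed by Young's inequality with parameter $\epsilon_*$ produces the $\epsilon_*^{-1}\int V_2^2$ contribution. The analogous $W/r^2$-terms use $W=W^{os}+W_1^*+W_2^*$: the Oseen part is controlled uniformly via (3.11), $W_1^*/r^2$ through Lemma 3.7 (giving another $t^{-1/2}$-type bound after taking square root), and $W_2^*/r$ via a Hardy-type bound converting it into $\|\p_r W^*/r\|_{\rL^2}$. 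The HT-terms, being quadratic or cubic in $(z,\gamma)$ with $\sigma^{-2}$ prefactors, integrate to contributions of size $\sigma^{-2}\|z\|_X^2+\|z\|_X^2$, which become the $\|z\|_X^2$ and $\epsilon_*\sigma^{-2}\|z\|_X^2$ terms after multiplication and Young.

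To obtain (3.19) I isolate $|\Theta'|$ from the real equation (dividing by $\|\mathbf{h}_1\|_{\rL^2}^2$) and then multiply through by $\|z\|_X$. The coupling term $\|z\|_X^2|\sigma'/\sigma+\mu^2/m^2|$ in (3.19) arises because the $\sigma'/\sigma$ contribution to the real equation comes from $(\sigma'/\sigma)\int\rho(\p_\rho z_1)\mathbf{h}_1\rho\,\mathrm{d}\rho$, which is of order $|\sigma'/\sigma|\cdot\|z\|_X$; writing $\sigma'/\sigma=(\sigma'/\sigma+\mu^2/m^2)-\mu^2/m^2$ splits this cleanly, the $\mu^2/m^2$ piece being absorbed into the pure $\|z\|_X^2$ term. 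The main obstacle is bookkeeping: Mod and HT in (2.8) contain many small terms, and one must distribute the Young's inequality weights $\epsilon_*,\epsilon_*^{-1}$ precisely so that the final bounds match the stated right-hand sides. The only genuinely nontrivial algebraic fact, the identity $\int\mathbf{h}_1^2\mathbf{h}_3\,\rho\,\mathrm{d}\rho=m^{-1}\|\mathbf{h}_1\|_{\rL^2}^2$, is what makes the cancellation $\sigma'/\sigma+\mu^2/m^2$ visible in the final statement.
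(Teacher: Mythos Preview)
Your proposal is correct and follows essentially the same route as the paper: project \eqref{eq-z-Mod-HT} onto $\mathbf{h}_1$, kill the $Nz$ term via $\rL_{\mathbf{h}}\mathbf{h}_1=0$, split into real and imaginary parts, and use the identity $\int_0^\infty \mathbf{h}_1^2\mathbf{h}_3\,\rho\,\mathrm{d}\rho=m^{-1}\int_0^\infty \mathbf{h}_1^2\,\rho\,\mathrm{d}\rho$ to expose the combination $\sigma'/\sigma+\mu^2/m^2$. One small simplification relative to your outline: the paper does \emph{not} split $W^*=W_1^*+W_2^*$ here but instead handles all of $W^*$ at once via the Hardy-type bound $\|W^*/r\|_{\rL^\infty}^2\lesssim\int_0^\infty(\p_r W^*)^2/r^2$ (which follows from $\|W^*/r\|_X^2=\int_0^\infty(\p_r W^*)^2/r^2$), producing the $\sigma^{-1}\bigl(\int(\p_r W^*)^2/r^2\bigr)^{1/2}$ term directly; your detour through Lemma~\ref{lem-est-W^*_1} is unnecessary at this stage but harmless.
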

\begin{proof}[\bf Proof] 
Taking $\mathrm{L}^2(\rho \h{1pt}\mathrm{d} \rho)$\h{0.5pt}-\h{0.5pt}inner product with
 $\mathbf{h}_{1}$ on both sides of \eqref{eq-z-Mod-HT}  and using   \eqref{eq-orthogonal-z-h_1}, we have
\begin{equation}\label{eq-equality-modulation} 
\int_0^{\infty} \mathrm{Mod} \cdot \mathbf{h}_{ 1}\h{1pt}\rho\h{1pt}\mathrm{d} \rho = - \int_0^{\infty} \mathrm{HT}\cdot \mathbf{h}_{ 1}\h{1pt}\rho\h{1pt}\mathrm{d} \rho.
\end{equation}
Here we have also used the fact that $\rL_{\mathbf{h} }[ \mathbf{h}_{1}]=0. $
 The real part of the left-hand side above is given by 
\begin{equation}\label{eq-equality-modulation-real-part} 
\begin{aligned}
 \mathrm{Re} \int_0^{\infty} \mathrm{Mod} \cdot \mathbf{h}_{1}\h{1pt}\rho\h{1pt}\mathrm{d} \rho& =  \frac{\sigma'}{\sigma} \int_0^{\infty} \mathbf{h}_{1} \h{1pt}\p_{\rho} \h{0.5pt}z_{1}\h{1pt}\rho^2 \h{1pt}\mathrm{d} \rho  - \mu^2 \int_0^{\infty} \mathbf{h}_{1} \mathbf{h}_{3}^2 \h{1pt}z_{1} \h{1pt}\rho \h{1pt}\mathrm{d} \rho\\[4pt]
& - \h{2pt} \int_0^{\infty} \Big\{ \big( 1 + \gamma  \big) \mathbf{h}_{ 1}^2 - \mathbf{h}_{ 1} \mathbf{h}_{ 3} \h{0.5pt}z_{ 2} \Big\}\left[ \Theta'  + \mu \h{0.5pt}V + \frac{m W }{r^2} \right]  \h{1pt}\rho \h{1pt}\mathrm{d} \rho.
\end{aligned}
\end{equation}
The imaginary part of the left-hand side of \eqref{eq-equality-modulation}  can be read as follows: 
\begin{equation}\label{eq-equality-modulation-imaginary-part}
\begin{aligned}
\mathrm{Im} \int_0^{\infty} \mathrm{Mod} \cdot \mathbf{h}_{ 1}\h{1pt}\rho\h{1pt}\mathrm{d} \rho =& - \int_0^{\infty} z_{ 1} \h{1pt} \mathbf{h}_{ 1} \h{0.5pt} \mathbf{h}_{ 3}  \left[  \Theta'  + \mu \h{0.5pt}V  + \frac{m W }{r^2} \right]   \rho \h{1.5pt}\mathrm{d} \h{0.5pt} \rho \\[6pt]
 & + \h{2pt}   \frac{\sigma'}{\sigma} \int_0^{\infty} \Big[ \big( 1 + \gamma  \big) m \h{0.5pt}\mathbf{h}_{ 1}^2 +  \mathbf{h}_{ 1}\h{1pt}\rho \h{1.5pt}\p_{\rho} \h{0.5pt}z_{ 2} \Big] \h{1pt}\rho \h{1.5pt}\mathrm{d} \h{0.5pt}\rho \\[6pt]
& + \h{2pt} \mu^2 \int_0^{\infty} \Big[ \big( 1 + \gamma  \big) \mathbf{h}_{ 1}^2 \h{0.5pt} \mathbf{h}_{ 3} + \mathbf{h}_{ 1}^3 \h{1pt}z_{ 2} - \mathbf{h}_{ 1} \h{0.5pt} \mathbf{h}_{ 3}^2 \h{1pt}z_{ 2} \Big] \h{1pt}\rho\h{1.5pt}\mathrm{d} \h{0.5pt}\rho.
\end{aligned}
\end{equation}

Firstly we estimate the right-hand side of \eqref{eq-equality-modulation-real-part}. Direct calculations show that 
 \begin{eqnarray*}  
 &&\frac{\sigma'}{\sigma} \int_0^{\infty} \mathbf{h}_{ 1} \h{1pt}\p_{\rho} \h{0.5pt}z_{ 1}\h{1pt}\rho^2 \h{1pt}\mathrm{d} \rho  - \mu^2 \int_0^{\infty} \mathbf{h}_{1} \mathbf{h}_{3}^2 \h{1pt}z_{1} \h{1pt}\rho \h{1pt}\mathrm{d} \rho\\[6pt]
&& \h{30pt} = \left[   \frac{\sigma'}{\sigma} + \frac{\mu^2}{m^2} \right] \int_0^{\infty} \mathbf{h}_{1} \h{1pt}\p_{\rho} \h{0.5pt}z_{1}\h{1pt}\rho^2 \h{1pt}\mathrm{d} \rho   - \mu^2 \int_0^{\infty} \Big\{   \mathbf{h}_{3}^2 \h{1pt}z_{1}  + \frac{1}{m^2} \h{1pt}\rho \h{1pt}\p_{\rho}\h{0.5pt}z_{1} \Big\}  \mathbf{h}_{1} \h{1pt}\rho \h{1.5pt} \mathrm{d} \rho.
\end{eqnarray*} 
Then  by H\"{o}lder's inequality, we can  deduce from the last equality that 
\begin{equation}\label{eq-est-real-part-first}
\left| \h{2pt} \frac{\sigma'}{\sigma} \int_0^{\infty} \mathbf{h}_{1} \h{1pt}\p_{\rho} \h{0.5pt}z_{1}\h{1pt}\rho^2 \h{1pt}\mathrm{d} \rho  - \mu^2 \int_0^{\infty} \mathbf{h}_{1} \mathbf{h}_{3}^2 \h{1pt}z_{1} \h{1pt}\rho \h{1pt}\mathrm{d} \rho \h{2pt}\right| \h{2pt}
\lesssim \h{2pt}  \|\h{1pt}z_1\h{1pt}\|_X\h{1pt}  +  \| \h{1pt}z_1 \h{1pt}\|_X \left|\h{2pt}   \frac{\sigma'}{\sigma} + \frac{\mu ^2}{m^2}  \h{2pt}\right|.
\end{equation}
Here we have used the fact that   $\rho\h{1pt}\mathbf h_1 \in \mathrm{L}^2(\rho \h{1pt}\mathrm{d} \rho)$ since $m \geq 3$. 
To estimate the last integral on the right-hand side of \eqref{eq-equality-modulation-real-part},  we split it into the sum $\mathrm{I}.1 + \mathrm{I}.2 + \mathrm{I}.3 + \mathrm{I}.4$,
 where
    \begin{eqnarray*}&& \mathrm{I}.1 = -   \Theta'  \int_0^{\infty} \Big[ \big( 1 + \gamma  \big) \mathbf{h}_{ 1}^2 - \mathbf{h}_{ 1} \mathbf{h}_{ 3}\h{1pt} z_{ 2}  \Big] \h{1pt} \rho\h{1pt}\mathrm{d} \h{0.5pt}\rho, \h{47pt} \mathrm{I}.2 = -  \int_0^{\infty} \frac{m W^{os} }{r^2} \Big[ \big( 1 + \gamma  \big) \mathbf{h}_{ 1}^2 - \mathbf{h}_{ 1} \mathbf{h}_{ 3} \h{0.5pt}z_{ 2} \Big]\h{1pt}\rho \h{1pt}\mathrm{d} \rho,\\[8pt]
&& \mathrm{I}.3 = - \int_0^{\infty} \frac{m W^*}{r^2} \h{1pt} \Big[ \big( 1 + \gamma  \big) \mathbf{h}_{ 1}^2 - \mathbf{h}_{ 1} \mathbf{h}_{ 3} \h{0.5pt}z_{ 2} \Big] \h{1pt}\rho \h{1pt}\mathrm{d} \rho,  \h{30pt} \mathrm{I}.4 = - \int_0^{\infty} \mu \h{0.5pt}V \h{1pt} \Big[ \big( 1 + \gamma  \big) \mathbf{h}_{ 1}^2 - \mathbf{h}_{ 1} \mathbf{h}_{ 3} \h{0.5pt}z_{ 2} \Big] \h{1pt}\rho \h{1pt}\mathrm{d} \rho.
\end{eqnarray*}
By   \eqref{eq-z-sobolev}, Lemma \ref{equiv-norms} and Corollary \ref{cor-energy-est-uniform-bound}, it follows that 
\begin{equation}\label{eq-z-L-infty-small}
\|\h{0.5pt} z\h{0.5pt}\|_{\mathrm L^\infty}\,\,\lesssim\,\, \|\h{0.5pt} z\h{0.5pt}\|_X \,\, \lesssim\,\,    \left\|\h{0.5pt} q\h{1pt} \right\|_{\rL^2} \,<\,\e_*.
\end{equation}
Hence we obtain  
\begin{equation}\label{eq-est-I.1}
  \left| \h{1.5pt}\Theta'\h{1.5pt}\right| \h{2pt}\lesssim\h{2pt} \left| \h{1pt}\mathrm{I}.1\h{1pt}\right|,
\end{equation}
provided that  a positive constant $\e_*   $ is suitably small. 
From \eqref{eq-W^os-bound}, it turns out  
\begin{equation}\label{eq-est-I.2}
 \big| \h{1.5pt}\mathrm{I}.2 \h{1pt}\big|   \h{2pt}\lesssim\h{2pt}  l^{-1} \int_0^{\infty} \mathbf{h}_1 \h{1pt}\rho \h{1pt}\mathrm{d} \h{0.5pt}\rho \h{2pt}\lesssim \h{2pt}  l^{-1}.
\end{equation}
In light of the following estimate 
\begin{equation}\label{eq-est- W*/r-X-partial}
 \left\| \h{1pt} \frac{W^* }{r} \h{1pt}\right\|_{\mathrm{L}^{\infty}}^2 \h{2pt}\lesssim\h{2pt}\left\| \h{1pt} \frac{W^* }{r} \h{1pt}\right\|_{X }^2 \,= \,\int_0^{\infty}\left|\p_r\left( \frac{    W^*   }{r}\right)\right|^2+ \left|\frac{ W^*  }{r^2}\right|^2\,= \,\int_0^{\infty} \frac{\big( \p_r W^*  \big)^2}{r^2},
\end{equation}
it can be shown that  
\begin{equation} \label{eq-est-I.3}
\left| \h{2pt}\mathrm{I}.3 \h{2pt}\right| \h{2pt}\lesssim\h{2pt}  \sigma^{-1} \left\| \h{1pt}\frac{W^*}{r} \h{1pt}\right\|_{\mathrm{L}^{\infty}} \int_0^{\infty} \mathbf{h}_{1} \h{2pt}\mathrm{d} \rho  
 \h{2pt}
 \lesssim\h{2pt}   \sigma^{-1} \h{1pt} \left( \int_0^{\infty} \frac{\big( \p_r W^* \big)^2}{r^2 } \right)^{1/2}.
\end{equation}
As for the integral $\mathrm{I}.4$, it  is bounded by 
 \begin{equation} \label{eq-est-I.4}
 \left| \h{2pt}\mathrm{I}.4\h{2pt}\right| \h{2pt}
\lesssim\h{2pt}    \| \h{1pt} V_1 \h{1pt}\|_{\mathrm{L}^{\infty}} \int_0^{\infty} \mathbf{h}_1 \h{1pt}\rho \h{1.5pt}\mathrm{d} \h{0.5pt}\rho +   \int_0^{\infty}  \big| \h{1pt}V_2 \h{1pt}\big| \h{1pt} \mathbf{h}_1  \h{1pt} \rho \h{1.5pt}\mathrm{d} \h{0.5pt}\rho \h{2pt}
\lesssim \h{2pt}  \| \h{1pt}V_1 \h{1pt}\|_{\mathrm{L}^{\infty}} +\sigma^{-1} \left( \int_0^{\infty} V_2^2 \right)^{1/2}.
\end{equation} 
Applying  the estimates \eqref{eq-est-real-part-first} and  \eqref{eq-est-I.1}-\eqref{eq-est-I.4} to  \eqref{eq-equality-modulation}-\eqref{eq-equality-modulation-real-part}, we get \begin{equation}\label{eq-est-modulation-theta'-first}
 \begin{aligned}
 \left| \h{1pt}\Theta'\h{1.5pt}\right| \h{2pt}
 &\lesssim \h{2pt}  l^{-1} +   \|\h{1pt}V_1 \h{1pt}\|_{\mathrm{L}^{\infty}}  +  \|\h{1pt}z_1\h{1pt}\|_X  + \|\h{1pt}z_1\h{1pt}\|_X \left|\h{2pt}   \frac{\sigma'}{\sigma} + \frac{\mu ^2}{m^2}  \h{2pt}\right|   \\[4pt]
&+ \h{2pt}  \sigma^{-1}  \left( \int_0^{\infty} V^2_2 \right)^{1/2} + \sigma^{-1} \h{1pt} \left( \int_0^{\infty} \frac{\big( \p_r W^* \big)^2}{r^2 } \right)^{1/2} + \left| \h{2pt}\int_0^{\infty} \mathrm{HT}\cdot \mathbf{h}_{1} \rho\h{1.5pt}\mathrm{d} \h{0.5pt}\rho \h{2pt}\right|.
\end{aligned}
\end{equation}
By the definition of $\mathrm{HT}$ in \eqref{eq-def-Mod-HT} and \eqref{eq-prop-gamma}, it satisfies 
\begin{equation}\label{eq-est-HT}
  \big|\h{1pt}\mathrm{HT}\h{1pt}\big| \h{2pt} \lesssim \h{2pt}  | \h{1pt}z\h{1pt}|^2 +  \sigma^{-2} \left( \h{1pt}\big| \h{1pt}\p_{\rho}\h{0.5pt}z\h{1pt}\big|^2 + \frac{ |\h{1pt}z \h{1pt}|^2}{\rho^2} \h{1pt}\right),
\end{equation}
which combined with \eqref{eq-z-sobolev}   implies 
\begin{eqnarray}\label{eq-est-HT-with-h_1}
\begin{aligned}
\left|\h{1pt}\int_0^{\infty} \mathrm{HT}\cdot \mathbf{h}_{ 1}\h{1pt}\rho\h{1pt}\mathrm{d} \rho\h{2pt}\right| \h{2pt}
&\lesssim  \h{2pt}    \int_0^{\infty} | \h{1pt}z\h{1pt}|^2 \h{1pt} \mathbf{h}_{1} \h{1pt}\rho\h{2pt}\mathrm{d} \rho + \sigma^{-2} \int_0^{\infty}  \left( \h{1pt}\big| \h{1pt}\p_{\rho}\h{0.5pt}z\h{1pt}\big|^2 + \frac{ |\h{1pt}z \h{1pt}|^2}{\rho^2} \h{1pt}\right)\mathbf{h}_{1} \h{1pt}\rho\h{2pt}\mathrm{d}\h{0.5pt} \rho \h{5pt}\\[5pt]
&\lesssim \h{3pt}  \| \h{1pt}z \h{1pt}\|_X^2 + \sigma^{-2} \h{1pt}\| \h{1pt}z\h{1pt}\|_X^2.
\end{aligned}
\end{eqnarray}
Applying this estimate to \eqref{eq-est-modulation-theta'-first}  and using Lemma \ref{lem-est-V_1}, we get \begin{eqnarray*}
&&\left| \h{1pt}\Theta'\h{2pt}\right| \h{2pt}\lesssim \h{2pt}  l^{-1} +      t^{- 1/2}  +  \|\h{1pt}z\h{1pt}\|_X  + \| \h{1pt}z \h{1pt}\|_X \left|\h{2pt}   \frac{\sigma'}{\sigma} + \frac{\mu ^2}{m^2}  \h{2pt}\right|   \\[4pt]
\nonumber &&\h{24pt} + \h{2pt}     \sigma^{-1}  \left( \int_0^{\infty} V^2_2 \right)^{1/2} + \sigma^{-1} \h{1pt} \left( \int_0^{\infty} \frac{\big( \p_r W^* \big)^2}{r^2 } \right)^{1/2}  + \sigma^{-2} \h{0.5pt}\| \h{1pt}z \h{1pt}\|_X^2.
\end{eqnarray*}
By multiplying $\| \h{1pt}z \h{1pt}\|_X$ on both sides above and utilizing  H\"{o}lder's inequality, the estimate \eqref{eq-modulation-est-theta}   follows. Here we also used  \eqref{eq-z-L-infty-small}.

Next we  establish \eqref{eq-modulation-est-sigma} by estimating each term on the right-hand side of  \eqref{eq-equality-modulation-imaginary-part}. In light of  \eqref{eq-z-sobolev},  it can be shown that 
\begin{equation}\label{eq-est-imaginary-theta'}
 \left| \h{2pt} \Theta' \h{2pt}\int_0^{\infty}   \mathbf{h}_{1} \mathbf{h}_{ 3} \h{0.5pt}z_{ 1}  \h{1pt}\rho \h{1pt}\mathrm{d} \rho \h{2pt}\right| \h{2pt}\lesssim\h{2pt}    \| \h{1pt}z_1\h{1pt}\|_{\mathrm{L}^{\infty}} \h{1pt}\big| \h{1pt}\Theta' \h{1.5pt}\big|  \int_0^{\infty} \mathbf{h}_1\h{1pt}\rho\h{1.5pt}\mathrm{d}\h{0.5pt}\rho \h{2pt}
  \lesssim \h{2pt} \|\h{1pt}z_1\h{1pt}\|_X \left| \h{1pt}\Theta'\h{2pt}\right|.
\end{equation}
 Utilizing   \eqref{eq-z-sobolev}, \eqref{eq-W^os-bound}    and \eqref{eq-est- W*/r-X-partial}, 
by similar arguments as for   \eqref{eq-est-I.2}, \eqref{eq-est-I.3}  and  \eqref{eq-est-I.4}, we have \begin{equation}\label{eq-est-imaginary-W-os}
 \left| \h{2pt}\int_0^{\infty} \frac{m W^{os} }{r^2}  \h{1.5pt}\mathbf{h}_{1} \mathbf{h}_{3} \h{0.5pt}z_{1}  \h{1pt}\rho \h{1.5pt}\mathrm{d} \h{0.5pt} \rho \h{2pt} \right| \h{2pt}
 \leq \h{2pt}\| \h{1pt}z_1 \h{1pt}\|_{\mathrm L^\infty} \h{2pt}\int_0^{\infty}  \left|  \frac{m W^{os} }{r^2}   \h{0.5pt}  \right|  \h{1.5pt}\mathbf{h}_{1}   \h{1pt}\rho \h{1.5pt}\mathrm{d} \h{0.5pt} \rho \h{2pt} \h{2pt}
 \lesssim\h{2pt}  l^{-1}\h{1pt}\| \h{1pt}z_1 \h{1pt}\|_X,
 \end{equation}
\begin{equation}\label{eq-est-imaginary-W*}
 \left| \h{2pt} \int_0^{\infty} \frac{m W ^*}{r^2} \mathbf{h}_{1}\mathbf{h}_{3} \h{1pt}z_{1} \h{1pt}\rho \h{1pt}\mathrm{d} \rho \h{2pt}\right| \h{2pt} 
 \h{2pt}\lesssim\h{2pt}   \sigma^{-1} \h{1pt}\|\h{1pt}z_1\h{1pt}\|_X \left( \int_0^{\infty} \frac{\big( \p_r W^* \big)^2}{r^2 } \right)^{1/2},
\end{equation}
and 
\begin{equation}\label{eq-est-imaginary-V}
  \left|\h{2pt}\int_0^{\infty}\mu  V  \h{1pt} \mathbf{h}_{ 1} \h{0.5pt}\mathbf{h}_{3} \h{0.5pt} z_{1} \h{1pt}\rho \h{1.5pt}\mathrm{d} \h{0.5pt} \rho \h{2pt} \right| \h{2pt}
 \lesssim\h{2pt}  \|\h{1pt} z_1 \h{1pt}\|_X  \h{1pt} \|\h{1pt}V_1\h{1pt}\|_{\mathrm{L}^{\infty}}  +  \|\h{1pt}z_1 \h{1pt}\|_X  \h{1pt}\sigma^{-1} \left( \int_0^{\infty} \, V_2^2 \right)^{1/2}.
\end{equation}
Thus by   \eqref{eq-est-imaginary-theta'}-\eqref{eq-est-imaginary-V},  the first integral on the right-hand side of \eqref{eq-equality-modulation-imaginary-part} 
 can be estimated as follows:
 \begin{equation} \label{eq-est-imaginary-theta-W-V}
 \begin{aligned}
 \left| \h{2pt} \int_0^{\infty} z_{ 1} \h{1pt} \mathbf{h}_{ 1} \h{0.5pt} \mathbf{h}_{ 3}  \left[  \Theta'  + \mu \h{0.5pt}V  + \frac{m W }{r^2} \right]   \rho \h{1.5pt}\mathrm{d} \h{0.5pt} \rho  \h{2pt}\right| \h{3pt}
 &\lesssim \h{3pt}  \|\h{1pt}z_1 \h{1pt}\|_X \left| \h{1pt}\Theta'\h{2pt}\right| +  l^{-1}\h{1pt}\| \h{1pt}z_1 \h{1pt}\|_X +   \| z_1 \|_X  \h{1pt} \|\h{1pt}V_1\h{1pt}\|_{\mathrm{L}^{\infty}}  \\[4pt]
 & + \h{2pt}  \sigma^{-1} \h{1pt}\|\h{1pt}z_1\h{1pt}\|_X \left( \int_0^{\infty} \frac{\big( \p_r W^* \big)^2}{r^2 } \right)^{1/2}  +  \|\h{1pt}z_1 \h{1pt}\|_X  \h{1pt}\sigma^{-1}  \left( \int_0^{\infty}  \,V_2^2 \right)^{1/2}.
\end{aligned}
\end{equation}
By using the   the following equality  
 \begin{eqnarray*} \int_0^{\infty} \mathbf{h}_1^2 \h{1pt}\mathbf{h}_3\h{1pt}\rho \h{1.5pt}\mathrm{d}\h{0.5pt}\rho = m^{-1} \int_0^{\infty} \mathbf{h}_1^2 \h{1pt}\rho \h{1.5pt}\mathrm{d}\h{0.5pt}\rho,
\end{eqnarray*}
the last two integrals on the right-hand side of \eqref{eq-equality-modulation-imaginary-part} can be rewritten as 
\begin{equation}\label{eq-est-imaginary-second}
\begin{aligned} 
& \left[ \frac{\sigma'}{\sigma} + \frac{\mu^2}{m^2} \right] \int_0^{\infty} \Big\{ \big( 1 + \gamma  \big) m \h{0.5pt}\mathbf{h}_{ 1}^2 +  \mathbf{h}_{ 1}\h{1pt}\rho \h{1.5pt}\p_{\rho} \h{0.5pt}z_{ 2} \Big\} \h{1pt}\rho \h{1.5pt}\mathrm{d} \h{0.5pt}\rho  \\[4pt]
& \h{30pt} + \h{2pt} \mu^2 \int_0^{\infty} \Big\{  \gamma \h{1pt}\mathbf{h}_{ 1}^2 \h{0.5pt} \mathbf{h}_{ 3} + \mathbf{h}_{ 1}^3 \h{1pt}z_{ 2} - \mathbf{h}_{ 1} \h{0.5pt} \mathbf{h}_{ 3}^2 \h{1pt}z_{ 2} \Big\} \h{1pt}\rho\h{1.5pt}\mathrm{d} \h{0.5pt}\rho - \frac{\mu^2}{m^2}  \int_0^{\infty} \Big\{  \gamma  \h{1pt} m \h{0.5pt}\mathbf{h}_{ 1}^2 +  \mathbf{h}_{ 1}\h{1pt}\rho \h{1.5pt}\p_{\rho} \h{0.5pt}z_{ 2} \Big\} \h{1pt}\rho \h{1.5pt}\mathrm{d} \h{0.5pt}\rho.
\end{aligned}
\end{equation}
In light of  the smallness of  $\| \h{1pt}z \h{1pt}\|_X$ in \eqref{eq-z-L-infty-small}, it holds \begin{equation}\label{eq-est-imaginary-second-1}
  \left|\h{1pt}\frac{\sigma'}{\sigma} + \frac{\mu^2}{m^2} \h{1pt}\right| \h{3pt}
   \lesssim\h{3pt} \left|\h{2pt} \frac{\sigma'}{\sigma} + \frac{\mu^2}{m^2} \h{2pt}\right| \h{2pt} \int_0^{\infty} \Big\{ \big( 1 + \gamma  \big) m \h{0.5pt}\mathbf{h}_{ 1}^2 +  \mathbf{h}_{ 1}\h{1pt}\rho \h{1.5pt}\p_{\rho} \h{0.5pt}z_{ 2} \Big\} \h{1pt}\rho \h{1.5pt}\mathrm{d} \h{0.5pt}\rho.
\end{equation}
By   H\"{o}lder's inequality and the fact that $\rho\h{1pt}\mathbf h_1 \in \mathrm{L}^2(\rho \h{1pt}\mathrm{d} \rho)$, the second line in \eqref{eq-est-imaginary-second} can be bounded from above by  $ \|\h{1pt}z\h{1pt}\|_X$ up to a constant depending on $m$ and $\mu$.  
This estimate combined with \eqref{eq-equality-modulation}, \eqref{eq-equality-modulation-imaginary-part} and \eqref{eq-est-imaginary-theta-W-V}-\eqref{eq-est-imaginary-second-1} then yields    \begin{eqnarray*}
   &&\left|\h{1pt}\frac{\sigma'}{\sigma} + \frac{\mu^2}{m^2} \h{1pt}\right| \h{2pt}
   \lesssim \h{2pt} \|\h{1pt}z_1\h{1pt}\|_X \left| \h{1pt}\Theta'\h{2pt}\right| +\h{2pt}  \,l^{-1}\h{1pt}\| \h{1pt}z_1 \h{1pt}\|_X  +\, \| z \|_X  \h{1pt} \|\h{1pt}V_1\h{1pt}\|_{\mathrm{L}^{\infty}}+ \, \|\h{1pt}z \h{1pt}\|_X\\[4pt]
\nonumber && \h{60pt} + \h{2pt}  \, \|\h{1pt}z \h{1pt}\|_X  \h{1pt}\sigma^{-1}  \left( \int_0^{\infty}  \,V_2^2 \right)^{1/2}+    \sigma^{-1} \h{1pt}\|\h{1pt}z\h{1pt}\|_X \left( \int_0^{\infty} \frac{\big( \p_r W^* \big)^2}{r^2 } \right)^{1/2}   +  \left| \h{2pt}\int_0^{\infty} \mathrm{HT}\cdot \mathbf{h}_{1} \rho\h{1.5pt}\mathrm{d} \h{0.5pt}\rho \h{2pt}\right|.
\end{eqnarray*} 
Applying \eqref{eq-modulation-est-theta} and  \eqref{eq-est-HT-with-h_1} to the above estimate, by  Lemma \ref{lem-est-V_1} and   H\"{o}lder's inequality, we deduce   \eqref{eq-modulation-est-sigma}. Here we also have used the smallness of $\|\h{1pt}z \h{1pt}\|_X$ in \eqref{eq-z-L-infty-small}.
\end{proof}
\
\\
\\  
 \noindent \textbf{III.4. $\mathrm{L}^2$\h{0.5pt}-\h{0.5pt}ESTIMATE OF THE VARIABLE $z$.}\\
 
 In the next lemma we derive  an energy estimate for the variable $z$.
    
  \begin{lemma}\label{lem-est-L^2-z}
  With the same assumption as in Lemma  \ref{lem-est-modutaion-parameters}, 
 we have 
 \begin{align*}
 & \frac{\mathrm{d}}{\mathrm{d}\h{0.5pt} t} \left[ \h{1pt}\sigma^2 \int_0^{\infty} |\h{1pt}z\h{1pt}|^2 \h{1 pt}\rho \h{1.5pt} \mathrm{d} \h{0.5pt} \rho \h{1pt}\right] + \mu^2\h{0.5pt} \sigma^2\int_0^{\infty} |\h{1pt}z \h{1pt}|^2 \rho\h{1.5pt}\mathrm{d}\h{0.5pt}\rho + c_* \h{1pt}  \|\h{1pt}z \h{1pt}\|_X^2 \h{3pt}
\\& \qquad \qquad \qquad   \lesssim \h{3pt}  \sigma^2 \h{1pt}\|\h{1pt}z \h{1pt}\|_X +    \sigma^2\h{1pt}\| \h{1pt}z \h{1pt}\|_X \h{1pt}t^{- 1/2}  
+  \sigma^2\h{1pt} \epsilon_*^{-1}\int_0^{\infty}  \h{1pt}V^2_2 +    \frac{ \big( \p_r W^*\big)^2}{r^2}.
\end{align*} Here $ c_* $ is a  positive constant. 
\end{lemma}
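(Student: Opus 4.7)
The plan is to test the evolution equation \eqref{eq-z-Mod-HT} against $z$ in the real $\mathrm{L}^2(\rho\,\mathrm{d}\rho)$-inner product. Using $N = \mathrm{L}_{\mathbf h}^* \mathrm{L}_{\mathbf h}$ from \eqref{eq-def-N} and integration by parts in the diffusive term, one obtains
$$\frac{1}{2}\frac{\mathrm{d}}{\mathrm{d}t}\int_0^\infty |z|^2 \rho\,\mathrm{d}\rho + \sigma^{-2}\int_0^\infty |\mathrm{L}_{\mathbf h} z|^2 \rho\,\mathrm{d}\rho = \int_0^\infty \big\langle \mathrm{Mod} + \mathrm{HT},\,z\big\rangle \rho\,\mathrm{d}\rho.$$
From $\mathrm{Mod}$ in \eqref{eq-def-Mod-HT} I isolate two distinguished pieces. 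The $\tfrac{\sigma'}{\sigma}\rho\,\p_\rho z$ term, after an integration by parts in $\rho$ with weight $\rho\,\mathrm{d}\rho$ using the vanishing boundary values from \eqref{eq-z-sobolev}, contributes $-\tfrac{\sigma'}{\sigma}\int|z|^2\rho\,\mathrm{d}\rho$ on the right; transferring it to the left and combining with $\tfrac{1}{2}\tfrac{\mathrm{d}}{\mathrm{d}t}\int|z|^2\rho\,\mathrm{d}\rho$ reassembles into $\tfrac{1}{2\sigma^2}\tfrac{\mathrm{d}}{\mathrm{d}t}\bigl[\sigma^2\int|z|^2\rho\,\mathrm{d}\rho\bigr]$. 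The $-\mu^2 \mathbf h_3^2 z$ term contributes $-\mu^2\int \mathbf h_3^2 |z|^2\rho\,\mathrm{d}\rho$, which moves to the left with a positive sign. Multiplying through by $2\sigma^2$ and invoking Lemma~\ref{lem-Coercivity} to bound $2\|\mathrm{L}_{\mathbf h} z\|_{\mathrm{L}^2}^2 \geq 2c_1 \|z\|_X^2$ for some $c_1 > 0$ yields
$$\frac{\mathrm{d}}{\mathrm{d}t}\!\Bigl[\sigma^2 \int |z|^2 \rho\,\mathrm{d}\rho\Bigr] + 2c_1\|z\|_X^2 + 2\mu^2 \sigma^2\int \mathbf h_3^2 |z|^2\rho\,\mathrm{d}\rho \;\leq\; 2\sigma^2\mathcal{R},$$
where $\mathcal{R}$ collects the remaining $\mathrm{Mod}$ and $\mathrm{HT}$ contributions.

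To replace the $\mathbf h_3^2$-weighted quadratic by the clean $\int|z|^2\rho\,\mathrm{d}\rho$ appearing in the lemma, I use $\mathbf h_1^2+\mathbf h_3^2 = 1$ to split
$$2\mu^2 \sigma^2 \int \mathbf h_3^2 |z|^2 \rho\,\mathrm{d}\rho = 2\mu^2\sigma^2 \int |z|^2\rho\,\mathrm{d}\rho - 2\mu^2\sigma^2 \int \mathbf h_1^2 |z|^2\rho\,\mathrm{d}\rho.$$
Since $|m|\geq 3$ forces $\mathbf h_1 \in \mathrm{L}^2(\rho\,\mathrm{d}\rho)$, the Sobolev bound \eqref{eq-z-sobolev} yields $\int \mathbf h_1^2 |z|^2 \rho\,\mathrm{d}\rho \leq \|z\|_{\mathrm{L}^\infty}^2 \int \mathbf h_1^2 \rho\,\mathrm{d}\rho \lesssim \|z\|_X^2$, so the defect is bounded by $C\mu^2 \sigma^2 \|z\|_X^2$. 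Taking $\delta_*$ small enough that $C\mu^2\sigma^2 \leq c_1$ on the relevant time interval (recall $\sigma \leq (1+\varepsilon)\sigma_{in}<\delta_*$), this defect is absorbed into the coercive term and produces the desired $+\mu^2\sigma^2\int|z|^2\rho\,\mathrm{d}\rho$ on the left.

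The residual $2\sigma^2\mathcal{R}$ is then estimated term by term. The $\Theta'$-contribution against $\mathbf h_1 z_1$ is bounded by $\sigma^2|\Theta'|\|z\|_X$ and controlled via Lemma~\ref{lem-est-modutaion-parameters}; the $\mu V_1$-contribution uses $\|V_1\|_{\mathrm{L}^\infty}\lesssim t^{-1/2}$ from Lemma~\ref{lem-est-V_1}; the $mW^{os}/r^2$-contribution uses the pointwise bound \eqref{eq-W^os-bound}. The $\mu V_2$- and $mW^*/r^2$-contributions require the rescalings $\int V_2^2 \rho\,\mathrm{d}\rho=\sigma^{-2}\int V_2^2 r\,\mathrm{d}r$ and $\int \mathbf h_1/r\cdot\rho\,\mathrm{d}\rho=\sigma^{-1}\int\mathbf h_1\,\mathrm{d}\rho$, together with the bound $\|W^*/r\|_{\mathrm{L}^\infty}^2 \lesssim \int(\p_r W^*)^2/r^2$ (as in the proof of Lemma~\ref{lem-est-modutaion-parameters}), producing $\sigma\|z\|_X\bigl(\int V_2^2\bigr)^{1/2}$ and $\sigma\|z\|_X\bigl(\int(\p_r W^*)^2/r^2\bigr)^{1/2}$; Young's inequality with weight $\epsilon_*$ converts these into the target terms $\epsilon_*\|z\|_X^2+\sigma^2\epsilon_*^{-1}\int V_2^2$ and $\sigma^2\|z\|_X^2+\int(\p_r W^*)^2/r^2$, with the $\epsilon_*\|z\|_X^2$ and $\sigma^2\|z\|_X^2$ absorbed into $c_1\|z\|_X^2$ by smallness. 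The $\tfrac{\sigma'}{\sigma}$-piece is handled via $|\sigma'/\sigma| \leq \mu^2/m^2 + |\sigma'/\sigma+\mu^2/m^2|$ together with Lemma~\ref{lem-est-modutaion-parameters}. The $\mu^2$ bulk pieces $\mu^2(1+\gamma)\mathbf h_1\mathbf h_3 z_2$ and $\mu^2\mathbf h_1^2 z_2^2$ are bounded directly using \eqref{eq-z-sobolev}. Finally, the pointwise bound \eqref{eq-est-HT} gives $|\mathrm{HT}||z|\lesssim|z|^3+\sigma^{-2}(|\p_\rho z|^2+|z|^2/\rho^2)|z|$, so after integration and multiplication by $2\sigma^2$ the $\mathrm{HT}$-contribution is cubic in $\|z\|_X$ and absorbed by smallness.

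The main technical obstacle is the two-step identification in the first paragraph: pinpointing \emph{exactly} the pair of $\mathrm{Mod}$ summands whose integration by parts and algebraic regrouping simultaneously produce both the conservative form $\tfrac{\mathrm{d}}{\mathrm{d}t}[\sigma^2\int|z|^2\rho\,\mathrm{d}\rho]$ and the coefficient $\mu^2\sigma^2$ in front of $\int|z|^2\rho\,\mathrm{d}\rho$ on the left, followed by the absorption of the $\mathbf h_1^2$-weighted defect into the coercive term. Once these structural points are pinned down, the rest is routine bookkeeping against Lemmas~\ref{lem-est-V_1}, \ref{lem-est-modutaion-parameters}, and~\ref{lem-Coercivity}, combined with Hölder's and Young's inequalities.
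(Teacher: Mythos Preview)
Your approach is essentially the same as the paper's: test \eqref{eq-z-Mod-HT} against $z$, use the $\tfrac{\sigma'}{\sigma}\rho\,\partial_\rho z$ piece (after integration by parts) together with the $-\mu^2\mathbf h_3^2 z$ piece to manufacture the left-hand side, invoke Lemma~\ref{lem-Coercivity} for coercivity, and then control the residual $\mathrm{Mod}$ and $\mathrm{HT}$ terms via Lemmas~\ref{lem-est-V_1}, \ref{lem-est-modutaion-parameters}, H\"older and Young. The paper does exactly this; in fact it performs the same split $\mathbf h_3^2=1-\mathbf h_1^2$ you describe, obtaining $-[\sigma'/\sigma+\mu^2]\int|z|^2\rho\,\mathrm{d}\rho$ on the left and leaving $\mu^2\int\mathbf h_1^2|z|^2\rho\,\mathrm{d}\rho$ on the right.

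There is one small gap in your write-up. You absorb the $\mathbf h_1^2$-defect $C\mu^2\sigma^2\|z\|_X^2$ into the coercive term by invoking $\sigma\leq (1+\varepsilon)\sigma_{in}<\delta_*$. But that bound on $\sigma(t)$ is \emph{not} part of the hypotheses of Lemma~\ref{lem-est-L^2-z}: it is the bootstrap assumption $(\mathrm{A}.1)$, which appears only later in the proof of Theorem~\ref{thm-large-time-behavior}. The hypotheses here (those of Lemma~\ref{lem-est-modutaion-parameters}) give you smallness of $\|z\|_X$ via \eqref{eq-z-L-infty-small}, not smallness of $\sigma$. The fix is immediate and is what the paper does: rather than absorb, simply bound $\mu^2\sigma^2\int\mathbf h_1^2|z|^2\rho\,\mathrm{d}\rho\lesssim\sigma^2\|z\|_X^2\lesssim\sigma^2\epsilon_*\|z\|_X\lesssim\sigma^2\|z\|_X$ and place it on the right-hand side, where it is already one of the target terms. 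A secondary remark: the paper also uses the orthogonality \eqref{eq-orthogonal-z-h_1} to kill the linear contributions $\int\mathbf h_1 z_j\,\rho\,\mathrm{d}\rho$ in the $\Theta'$ and $\sigma'/\sigma$ terms, leaving only the cubic $\gamma$-weighted pieces; your cruder linear bounds still close through Lemma~\ref{lem-est-modutaion-parameters}, but the orthogonality makes the bookkeeping cleaner.
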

\begin{proof}[\bf Proof] 
Taking $\mathrm{L}^2(\rho\h{1.5pt}\mathrm{d}\h{0.5pt} \rho)$\h{0.5pt}-\h{0.5pt}inner product with $z$ on both sides of \eqref{eq-z-Mod-HT}, 
we obtain 
\begin{equation}\label{eq-energy-est-z-with-z}
\frac{1}{2}\h{1pt}\frac{\mathrm{d}}{\mathrm{d}\h{0.5pt} t} \int_0^{\infty} |\h{1pt}z\h{1pt}|^2 \h{1 pt}\rho \h{1.5pt} \mathrm{d} \h{0.5pt} \rho +  \sigma^{-2} \int_0^{\infty} \big|\h{1pt}\mathrm{L}_{\mathbf{h}} \h{1pt}z\h{1pt}\big|^2 \rho\h{1.5pt} \mathrm{d} \h{0.5pt}\rho = \int_0^{\infty} \big< \h{1pt}\mathrm{Mod}, z \h{0.5pt}\big> \h{1pt}\rho\h{1.5pt}\mathrm{d} \h{0.5pt}\rho + \int_0^{\infty} \big< \h{1pt}\mathrm{HT}, z \h{0.5pt}\big> \h{1pt}\rho\h{1.5pt}\mathrm{d} \h{0.5pt} \rho.
\end{equation}
Through integration by parts, the first integral on the right-hand side above can be rewritten as 
\begin{eqnarray*}
\nonumber&&\int_0^{\infty} \big< \h{1pt}\mathrm{Mod}, z \h{0.5pt}\big> \h{1pt}\rho\h{1.5pt}\mathrm{d} \h{0.5pt}\rho =   - \left[\h{1pt}\frac{\sigma'}{\sigma} + \mu^2 \h{1pt}\right] \int_0^{\infty} |\h{1pt}z \h{1pt}|^2 \rho\h{1.5pt}\mathrm{d}\h{0.5pt}\rho  -  \Theta'  \int_0^{\infty} \gamma\h{1pt}z_1\h{1pt}\mathbf{h}_1\h{1pt}\rho\h{1.5pt}\mathrm{d}\h{0.5pt}\rho \\[6pt]
\nonumber&&\h{30pt} - \h{2pt}  \int_0^{\infty} \frac{m W^{os}}{r^2} \h{1pt} ( 1 + \gamma) \h{1pt}z_1\h{1pt}\mathbf{h}_1\h{1pt}\rho\h{1.5pt}\mathrm{d} \h{0.5pt}\rho -   \int_0^{\infty} \frac{m W^*}{r^2} \h{1pt}\big(1 + \gamma\big) \h{1pt}z_1\h{1pt}\mathbf{h}_1\h{1pt}\rho\h{1.5pt}\mathrm{d} \h{0.5pt}\rho -   \int_0^{\infty} \mu \h{0.5pt} V \h{0.5pt} \big(1 + \gamma\big) \h{1pt}z_1\h{1pt}\mathbf{h}_1\h{1pt}\rho\h{1.5pt}\mathrm{d} \h{0.5pt}\rho \nonumber \\[6pt]
&& \h{30pt}   + \h{2pt}m \left[ \frac{\sigma'}{\sigma} + \frac{\mu^2}{m^2} \right]   \int_0^{\infty}  \gamma\h{0.5pt}z_2 \h{0.5pt}\mathbf{h}_1 \rho\h{1.5pt}\mathrm{d}\h{0.5pt}\rho + \mu^2 \int_0^{\infty} \big<\h{1pt}z, i \h{0.5pt}( 1 + \gamma ) \h{0.5pt}\mathbf{h}_1 \h{0.5pt}\mathbf{h}_3 + i \h{0.5pt} \mathbf{h}_1^2 \h{0.5pt} z_2 + \mathbf{h}_1^2 \h{0.5pt}z - i \h{1pt} m^{-1} \h{0.5pt}\gamma\h{1pt}\mathbf{h}_1 \h{1pt}\big> \h{1pt} \rho\h{1.5pt}\mathrm{d}\h{0.5pt}\rho.
\end{eqnarray*}
Here we have  used the orthogonal condition \eqref{eq-orthogonal-z-h_1} and the fact that $\mathbf{h}_1^2+ \mathbf{h}_3^2=1$. 
Plugging the above equality to \eqref{eq-energy-est-z-with-z} and multiplying $\sigma^2$ on both sides of the resulting equality, we get 
\begin{equation}\label{eq-energy-est-z-with-z-sigma^2}
\begin{aligned}
&\frac{1}{2}\h{1pt}\frac{\mathrm{d}}{\mathrm{d}\h{0.5pt} t} \left[ \h{1pt}\sigma^2  \int_0^{\infty} |\h{1pt}z\h{1pt}|^2 \h{1 pt}\rho \h{1.5pt} \mathrm{d} \h{0.5pt} \rho \h{1pt}\right] + \mu^2 \sigma^2  \int_0^{\infty} |\h{1pt}z\h{1pt}|^2 \h{1 pt}\rho \h{1.5pt} \mathrm{d} \h{0.5pt} \rho +   \int_0^{\infty} \big|\h{1pt}\mathrm{L}_{\mathbf{h}} \h{1pt}z\h{1pt}\big|^2 \rho\h{1.5pt} \mathrm{d} \h{0.5pt}\rho =\\[8pt] &\h{42pt}  - \h{2pt}\sigma^2 \h{1pt} \Theta'  \int_0^{\infty} \gamma\h{1pt}z_1\h{1pt}\mathbf{h}_1\h{1pt}\rho\h{1.5pt}\mathrm{d}\h{0.5pt}\rho - \sigma^2 \int_0^{\infty} \frac{m W^{os}}{r^2} \h{1pt} ( 1 + \gamma) \h{1pt}z_1\h{1pt}\mathbf{h}_1\h{1pt}\rho\h{1.5pt}\mathrm{d} \h{0.5pt}\rho - \sigma^2  \int_0^{\infty} \frac{m W^*}{r^2} \h{1pt}\big(1 + \gamma\big) \h{1pt}z_1\h{1pt}\mathbf{h}_1\h{1pt}\rho\h{1.5pt}\mathrm{d} \h{0.5pt}\rho    \\[8pt]
 & \h{42pt}   - \h{2pt} \sigma^2 \int_0^{\infty}  \h{1pt}\mu \h{1pt}  V \big(1 + \gamma\big) \h{0.5pt}z_1\h{1pt}\mathbf{h}_1\h{1pt}\rho\h{1.5pt}\mathrm{d} \h{0.5pt}\rho + m \h{1pt}\sigma^2 \left[ \frac{\sigma'}{\sigma} + \frac{\mu^2}{m^2} \right]   \int_0^{\infty}  \gamma\h{0.5pt}z_2 \h{0.5pt}\mathbf{h}_1 \rho\h{1.5pt}\mathrm{d}\h{0.5pt}\rho \\[8pt]
 &\h{42pt}   +  \h{2pt} \mu^2 \h{1pt}\sigma^2 \int_0^{\infty} \big<\h{1pt}z, i \h{0.5pt}( 1 + \gamma ) \h{0.5pt}\mathbf{h}_1 \h{0.5pt}\mathbf{h}_3 + i \h{0.5pt} \mathbf{h}_1^2 \h{0.5pt} z_2 + \mathbf{h}_1^2 \h{0.5pt}z - i \h{1pt} m^{-1} \h{0.5pt}\gamma\h{1pt}\mathbf{h}_1 \h{1pt}\big> \h{1pt} \rho\h{1.5pt}\mathrm{d}\h{0.5pt}\rho + \sigma^2\int_0^{\infty} \big< \h{1pt}\mathrm{HT}, z \h{0.5pt}\big> \h{1pt}\rho\h{1.5pt}\mathrm{d} \h{0.5pt} \rho.
\end{aligned}
\end{equation}
Now we estimate each term on the right-hand side above. Using \eqref{eq-prop-gamma} and \eqref{eq-z-sobolev}, we have 
\begin{eqnarray}\label{eq-energy-est-z-with-z-sigma^2-nonlinear} 
\left|\h{2pt}\int_0^{\infty} \gamma\h{1pt}z_1\h{1pt}\mathbf{h}_1\h{1pt}\rho\h{1.5pt}\mathrm{d}\h{0.5pt}\rho \h{2pt}\right| \h{2pt} \lesssim  \h{2pt}\|\h{1pt}z \h{1pt}\|_X^3.
\end{eqnarray}
It then follows
 \begin{eqnarray} \label{eq-energy-est-z-with-z-sigma^2-theta'} 
 \sigma^2 \left|\h{2pt} \Theta'  \int_0^{\infty} \gamma\h{1pt}z_1\h{1pt}\mathbf{h}_1\h{1pt}\rho\h{1.5pt}\mathrm{d}\h{0.5pt}\rho \h{2pt}\right| \h{2pt}\lesssim \h{2pt}  \sigma^2 \h{1pt} \| \h{1pt}z \h{1pt}\|_X^3 \h{1pt}\left|\h{1pt}\Theta'  \h{2pt}\right|.
\end{eqnarray}
In light of \eqref{eq-W^os-bound}, the second integral on the right-hand side of \eqref{eq-energy-est-z-with-z-sigma^2} can be estimated by 
\begin{eqnarray}\label{eq-energy-est-z-with-z-sigma^2-W-os} 
 \sigma^2 \left| \h{2pt}  \int_0^{\infty} \frac{m W^{os} }{r^2} \h{1pt} ( 1 + \gamma) \h{1pt}z_1\h{1pt}\mathbf{h}_1\h{1pt}\rho\h{1.5pt}\mathrm{d} \h{0.5pt}\rho  \h{2pt}\right|  \h{2pt}
 \lesssim \h{2pt} \sigma^2 \h{1pt}   \h{1pt}l^{-1}\h{0.5pt}\|\h{1pt}z \h{1pt}\|_X.
\end{eqnarray} 
Using   similar estimates as in \eqref{eq-est-I.3}-\eqref{eq-est-I.4}, we obtain   \begin{eqnarray} \label{eq-energy-est-z-with-z-sigma^2-W*} 
\begin{aligned}
\sigma^2 \left|\h{2pt}\int_0^{\infty} \frac{m W^*}{r^2} \big(1 + \gamma \h{0.5pt}\big) z_1\h{1pt}\mathbf{h}_1   \rho\h{2pt}\mathrm{d} \rho \h{2pt}\right| \h{2pt}&\lesssim\h{2pt} \sigma \h{1pt}\| \h{1pt}z \h{1pt}\|_X \left( \int_0^{\infty} \frac{\big( \p_r W^* \big)^2}{r^2} \right)^{1/2} \h{6pt}\\[8pt]
&\lesssim \h{2pt} \epsilon_* \h{1pt}\| \h{1pt}z\h{1pt}\|_X^2 + \sigma^2 \h{1pt} \epsilon_*^{-1} \int_0^{\infty} \frac{\big( \p_r W^* \big)^2}{r^2}, 
\end{aligned}
\end{eqnarray}
and 
 \begin{eqnarray}\label{eq-energy-est-z-with-z-sigma^2-V} 
 \begin{aligned}
\sigma^2 \left|  \h{1pt} \int_0^{\infty} \mu \h{1pt} V\h{0.5pt}\big(1 + \gamma\h{1pt}\big) \h{0.5pt} z_1 \h{0.5pt} \mathbf{h}_1 \h{0.5pt} \rho \h{1.5pt} \mathrm{d} \h{0.5pt}\rho \h{1pt}\right| \h{2pt}
 &\lesssim\h{2pt}  \sigma^2 \h{1pt}\| \h{1pt}z \h{1pt}\|_X\|\h{1pt} V_1\h{1pt}\|_{\mathrm{L}^{\infty}} +  \sigma  \h{1pt}  \|\h{1pt}z\h{1pt}\|_X \left( \int_0^{\infty}  \h{1pt} V^2_2 \right)^{1/2} \\[8pt] 
 &  \lesssim\h{2pt} \epsilon_*\h{1pt} \| \h{1pt}z\h{1pt}\|_X^2 +  \sigma^2  \h{1pt} \h{1pt}  \| \h{1pt}z \h{1pt}\|_X\|\h{1pt} V_1\h{1pt}\|_{\mathrm{L}^{\infty}} +   \h{1pt} \sigma^2 \h{1pt}\epsilon_*^{-1}  \int_0^{\infty}  \h{1pt}V^2_2. \h{20pt}
  \end{aligned}
\end{eqnarray}
 We are left to study the last  three integrals on the right-hand side of \eqref{eq-energy-est-z-with-z-sigma^2}. Firstly  by  a similar argument as for the estimate     \eqref{eq-energy-est-z-with-z-sigma^2-nonlinear}, we have
 \begin{eqnarray}  \label{eq-energy-est-z-with-z-sigma^2-nonlinear-1}
\left|  \h{1.5pt} m \sigma^2\left[ \frac{\sigma'}{\sigma} + \frac{\mu^2}{m^2} \right]  \int_0^{\infty}  \gamma\h{0.5pt}z_2 \h{0.5pt}\mathbf{h}_1 \rho\h{1.5pt}\mathrm{d}\h{0.5pt}\rho    \h{2pt}
\right| \,\lesssim\h{2pt} \sigma^2 \h{1pt} \|\h{1pt}z\h{1pt}\|_X^3  \left|\h{2pt}\frac{\sigma'}{\sigma} + \frac{\mu^2}{m^2}\h{2pt}\right|.
\end{eqnarray}
Moreover by    \eqref{eq-z-sobolev}, \eqref{eq-est-HT} and \eqref{eq-z-L-infty-small}, it holds 
\begin{eqnarray}  \label{eq-energy-est-z-with-z-sigma^2-nonlinear-2}
\mu^2 \h{1pt}\sigma^2 \h{1pt}\left|\h{2pt}\int_0^{\infty} \big<\h{1pt}z, i \h{0.5pt}( 1 + \gamma ) \h{0.5pt}\mathbf{h}_1 \h{0.5pt}\mathbf{h}_3 + i \h{0.5pt} \mathbf{h}_1^2 \h{0.5pt} z_2 + \mathbf{h}_1^2 \h{0.5pt}z - i \h{1pt} m^{-1} \h{0.5pt}\gamma\h{1pt}\mathbf{h}_1 \h{1pt}\big> \h{1pt} \rho\h{1.5pt}\mathrm{d}\h{0.5pt}\rho \h{2pt}\right| \h{2pt}\lesssim \h{2pt}  \h{1pt} \sigma^2 \h{1pt}\| \h{1pt}z \h{1pt}\|_X,
\end{eqnarray}
and 
\begin{eqnarray}  \label{eq-energy-est-z-with-z-sigma^2-nonlinear-3}
\begin{aligned}
\sigma^2 \left|\h{2pt} \int_0^{\infty} \big< \h{1pt}\mathrm{HT}, z \big> \h{1pt}\rho\h{1.5pt}\mathrm{d} \h{0.5pt} \rho \h{2pt}\right| \h{2pt}
&\lesssim \h{2pt} \sigma^2  \int_0^{\infty} |\h{1pt}z \h{1pt}|^3 \h{1pt}\rho \h{1.5pt}\mathrm{d} \h{0.5pt} \rho +\|\h{1pt}z\h{1pt}\|_{\mathrm L^\infty}  \int_0^{\infty}\left( | \h{1pt}\p_{\rho} \h{1pt}z \h{1pt}|^2 \h{1pt}  + \frac{|\h{1pt}z\h{1pt}|^2}{\rho^2} \right)\h{1.5pt}\rho\h{0.5pt}  \mathrm{d} \h{0.5pt}\rho \h{3pt}\\[6pt]
&\lesssim \h{3pt} \epsilon_* \h{1pt}  \sigma^2  \int_0^{\infty} |\h{1pt}z \h{1pt}|^2 \rho \h{1.5pt} \mathrm{d} \h{0.5pt} \rho +  \epsilon_* \h{1pt}\|\h{1pt}z\h{1pt}\|_X^2.
\end{aligned}
\end{eqnarray}
By applying \eqref{eq-energy-est-z-with-z-sigma^2-theta'}-\eqref{eq-energy-est-z-with-z-sigma^2-nonlinear-3}  to the right-hand side of \eqref{eq-energy-est-z-with-z-sigma^2} and employing the coercivity of $\mathrm{L}_{\mathbf{h}}$ operator given in Lemma \ref{lem-Coercivity}, it turns out 
\begin{align*}
&\frac{1}{2}\h{1pt}\frac{\mathrm{d}}{\mathrm{d}\h{0.5pt} t} \left[ \h{1pt}\sigma^2  \int_0^{\infty} |\h{1pt}z\h{1pt}|^2 \h{1 pt}\rho \h{1.5pt} \mathrm{d} \h{0.5pt} \rho \h{1pt}\right] + \frac{1}{2} \h{1pt}\mu^2 \sigma^2  \int_0^{\infty} |\h{1pt}z\h{1pt}|^2 \h{1 pt}\rho \h{1.5pt} \mathrm{d} \h{0.5pt} \rho +  c_1 \h{0.5pt}\|\h{1pt}z \h{1pt}\|_X^2   \\[8pt]
 &\qquad\quad \lesssim \h{3pt}     \sigma^2 \|\h{1pt}z \h{1pt}\|_X^3 \h{1pt}\left|\h{1pt}\Theta' \h{1pt}\right|  + \sigma^2 \h{1pt} \|\h{1pt}z \h{1pt}\|_X^3  \left|\h{2pt}\frac{\sigma'}{\sigma} + \frac{\mu^2}{m^2}\h{2pt}\right|  +  \, \sigma^2 \|\h{1pt}z \h{1pt}\|_X     \\[8pt]
  &\qquad \quad  + \h{3pt}  \sigma^2  \|\h{1pt}z\h{1pt}\|_X  \h{1pt}\Big\{\,  l^{-1}+\left\| \h{1pt}V_1 \h{1pt}\right\|_{\mathrm{L}^{\infty}} \Big\}+  \sigma^2 \h{1pt}\epsilon_*^{-1}\int_0^{\infty}  \h{1pt}V^2_2 +   \frac{\big( \p_r W^* \big)^2}{r^2}\,,
\end{align*} 
 provided  that a positive constant  $\e_*$ is suitably small. 
 Here $c_1 $ is a positive constant.  By  Lemma \ref{lem-est-V_1}, Lemma \ref{lem-est-modutaion-parameters} and the above estimate, the proof then follows.  Here we also have used the smallness of $\|\h{1pt}z \h{1pt}\|_X$ in \eqref{eq-z-L-infty-small}. 
\end{proof}
\
\\
\\
\setcounter{section}{4}
\setcounter{thm}{0}
\setcounter{equation}{0}
\noindent \textbf{IV. PROOF OF THE MAIN THEOREM.}\\

This section is devoted to the proof of our main results. 

 \begin{proof}[\bf Proof of Theorem \ref{thm-large-time-behavior}]  The local existence of \eqref{eq-main-phi-W-V} with  initial data satisfying \eqref{eq-in-orthogonal-z-h_1}-\eqref{eqq-assump-small-initial} can be obtained by methods in \cite{CY1}-\cite{CY2}. We omit it here. Moreover for the local solution obtained, denoted by $\big(W, V, \varphi\big)$, we also have the decomposition \eqref{eq-decom-varphi} for the vector field $\varphi$. In the remainder of the proof, we extend the local solution globally in time.  Suppose that $\big(W, V, \varphi\big)$ exists on the time interval $[\h{1pt}0, T\h{1pt}]$. For some $\mathrm{C}^1$-regular parameter functions $\sigma(\cdot)$ and $\Theta(\cdot)$, \eqref{eq-decom-varphi}  holds for $\varphi$ with the perturbation function $z$ satisfying \eqref{eq-orthogonal-z-h_1}. In addition we can also assume     
 \begin{equation*}
 \begin{aligned}
& (\mathrm{A}.1). \h{20pt} \big(1-\varepsilon/2\big) \h{1pt}e^{ - \frac{\mu^2}{m^2}\h{1pt}t } \sigma_{in} \h{2pt}\leq \h{2pt}\sigma\h{2pt} \leq\h{2pt}  \big(1+\varepsilon/2\big)  \h{1pt}   e^{ - \frac{\mu^2}{m^2}\h{1pt}t } \sigma_{in} , \h{20pt}&\forall\h{2pt}t \in [\h{1pt}0, T\h{1pt}];\\[8pt]
&(\mathrm{A}.2). \h{20pt} \int_0^{\infty} V_2^2 \h{2pt}\leq\h{2pt}\epsilon_*^{3/2} \h{1pt}\frac{1}{(1 + t )^2}, \h{100pt}&\forall\h{2pt}t \in [\h{1pt}0, T\h{1pt}].
\end{aligned}
\end{equation*}
Here $\varepsilon \in (0, 1)$ is an arbitrary constant.   $\e_* $ is given in Corollary \ref{cor-energy-est-uniform-bound}.  In fact by  the continuity of the parameter function $\sigma(\cdot)$, the assumption (A.1)   holds for  some $T>0$ .  Utilizing Corollary \ref{cor-energy-est-uniform-bound} and  the  estimate given below: 
$$\left\|\h{1pt}V_1\h{1pt} \right\|_{\mathrm L^2} \,\leq\,   \left\|\h{1pt}V_{in}\h{1pt} \right\|_{\mathrm L^2}\,<\, \delta_*\, <\, \e_*,$$ we get (A.2) for some $T > 0$. One should notice that the last estimate holds since $V_1$ satisfies the heat equation   \eqref{eq-for-V_1}. Now we separate the following arguments into four steps. \\
\\
\textbf{Step 1.} 
Multiplying $\exp\Big\{ \frac{\mu^2}{m^2}\h{0.5pt}t \Big\}$ on both sides of the estimate in Lemma \ref{lem-est-L^2-z} and utilizing  (A.1)-(A.2), we obtain \begin{eqnarray*} 
&&\frac{\mathrm{d}}{\mathrm{d}\h{0.5pt}t} \left[ \exp \left\{\frac{\mu^2}{m^2}\h{0.5pt}t \right\} \h{1pt} \sigma^2 \int_0^{\infty} |\h{1pt}z \h{1pt}|^2 \h{1pt}\rho\h{1.5pt}\mathrm{d}\h{0.5pt}\rho \right] + c_*  \exp \left\{\frac{\mu^2}{m^2}\h{0.5pt}t \right\} \h{1pt} \| \h{1pt}z \h{1pt}\|_X^2 \h{3pt}
\lesssim  \h{3pt} \epsilon_* \sigma_{in}^2  \exp \left\{- \frac{\mu^2}{m^2}\h{0.5pt}t \right\} \h{1pt} \\[8pt]
&& \h{60pt}+ \h{2pt}\epsilon_* \sigma_{in}^2 \h{1pt}t^{ - 1/2}  \exp \left\{- \frac{\mu^2}{m^2}\h{0.5pt}t \right\} \h{1pt}+ \epsilon_*^{1/2} \sigma_{in}^2 \h{1pt}  \exp \left\{- \frac{\mu^2}{m^2}\h{0.5pt}t \right\} \h{1pt}+ \sigma_{in}^2 \h{1pt}\epsilon_*^{-1}    \int_0^{\infty} \frac{\big( \p_r W^* \big)^2}{r^2}.
\end{eqnarray*} 
Here we also have used the smallness of  $\|\h{1pt} z \h{1pt}\|_X\h{1pt} $  in  \eqref{eq-z-L-infty-small}. Fixing a $t \in [\h{1pt}0, T\h{1pt}]$ and integrating the above estimate from $0$ to $t$, by Corollary \ref{cor-energy-est-uniform-bound}, we have
\begin{equation}\label{eq-est-energy-z-exp-1}
\exp \left\{\frac{\mu^2}{m^2}\h{0.5pt}t \right\} \h{1pt} \sigma^2 \int_0^{\infty} |\h{1pt}z \h{1pt}|^2 \h{1pt}\rho\h{1.5pt}\mathrm{d}\h{0.5pt}\rho + c_* \int_0^t  \exp \left\{\frac{\mu^2}{m^2}\h{0.5pt}s \right\} \h{1pt} \| \h{1pt}z \h{1pt}\|_X^2  \h{2pt}\mathrm{d} s \h{3pt}
\lesssim  \h{3pt} \epsilon_*^{1/2} \sigma_{in}^2 +  \sigma_{in}^2 \int_0^{\infty} \big|\h{1pt}z_{in}(\rho)  \h{1pt}\big|^2 \h{1pt}\rho\h{1.5pt}\mathrm{d}\h{0.5pt}\rho. 
\end{equation}In the next we multiply $\exp\Big\{  \frac{2\mu^2}{m^2}\h{0.5pt}t \Big\}$ on both sides of the estimate in   Lemma \ref{lem-est-L^2-z}  and employ  (A.1)-(A.2). It then follows 
 \begin{align*}
& \frac{\mathrm{d}}{\mathrm{d}\h{0.5pt}t} \left[ \exp \left\{\frac{2\mu^2}{m^2}\h{0.5pt}t \right\} \h{1pt} \sigma^2 \int_0^{\infty} |\h{1pt}z \h{1pt}|^2 \h{1pt}\rho\h{1.5pt}\mathrm{d}\h{0.5pt}\rho \right] + c_*  \exp \left\{\frac{2\mu^2}{m^2}\h{0.5pt}t \right\} \h{1pt} \| \h{1pt}z \h{1pt}\|_X^2 \h{3pt}\\[8pt]
 &\qquad\qquad\qquad\lesssim  \h{3pt} \sigma_{in}^2\h{1pt} \| \h{1pt}z \h{1pt}\|_X + \sigma_{in}^2 \h{1pt} \| \h{1pt}z \h{1pt}\|_X \h{1pt} t^{- 1/2} +   \sigma_{in}^2 \h{1pt} \epsilon_*^{1/2} \h{1pt}( 1 + t )^{-2} + \sigma_{in}^2 \h{1pt}\epsilon_*^{-1} \int_0^{\infty} \frac{\big(\p_r W^* \big)^2}{r^2}.
\end{align*}
Here we have used the assumption $m \geq 3$ so that $m^2 \geq 2$. Integrating the above estimate from $0$ to $t$ and using Corollary \ref{cor-energy-est-uniform-bound}, we obtain  
 \begin{equation}\label{eq-est-energy-z-exp-2}
 \begin{aligned}
 &\exp \left\{\frac{2\mu^2}{m^2}\h{0.5pt}t \right\} \h{1pt} \sigma^2 \int_0^{\infty} |\h{1pt}z \h{1pt}|^2 \h{1pt}\rho\h{1.5pt}\mathrm{d}\h{0.5pt}\rho + c_* \int_0^t \exp \left\{\frac{2\mu^2}{m^2}\h{0.5pt}s \right\} \h{1pt} \| \h{1pt}z \h{1pt}\|_X^2 \h{2pt}\mathrm{d} s  \h{2pt}\\[8pt]
&\h{50pt}
 \lesssim \h{3pt} \sigma_{in}^2 \h{1pt} \epsilon_*^{1/2} +  \sigma_{in}^2 \int_0^{\infty} \big|\h{1pt}z_{in}(\rho) \h{1pt}\big|^2 \h{1pt}\rho\h{1.5pt}\mathrm{d}\h{0.5pt}\rho 
 + \h{2pt} \sigma_{in}^2 \int_0^t   \| \h{1pt}z \h{1pt}\|_X \h{1pt}\mathrm{d} s + \sigma_{in}^2 \int_0^t \| \h{1pt}z \h{1pt}\|_X s^{- 1/2}\h{1pt} \mathrm{d} s .
 \end{aligned}\end{equation}
 By H\"older's inequality and  \eqref{eq-est-energy-z-exp-1}, it holds 
 \begin{equation}\label{eq-est-energy-z-X-int-time}
 \begin{aligned}
  \int_0^t   \| \h{1pt}z \h{1pt}\|_X \h{1pt}\mathrm{d} s \h{2pt} &= \h{2pt}  \int_0^t \exp \left\{- \frac{ \mu^2}{2m^2} s \right\} \exp \left\{\frac{ \mu^2}{2 m^2} s \right\} \| \h{1pt}z \h{1pt}\|_X \mathrm{d} s\\[8pt]
  &\lesssim  \h{3pt}   \left( \h{1pt}\int_0^t  \exp \left\{\frac{ \mu^2}{m^2} s\right\} \| \h{1pt}z \h{1pt}\|_X^2 \h{1pt} \mathrm{d} s \h{1pt}\right)^{1/2}\left( \h{1pt}\int_0^t  \exp \left\{-\frac{\mu^2}{m^2} s\right\} \h{1pt} \mathrm{d} s \h{1pt} \right)^{1/2} \\[8pt]
&  \lesssim  \h{3pt} \left(\sigma_{in}^2 \h{1pt}  \epsilon_*^{1/2}  +  \sigma_{in}^2 \int_0^{\infty} |\h{1pt}z_{in}(\rho) \h{1pt}|^2 \h{1pt}\rho\h{1.5pt}\mathrm{d}\h{0.5pt}\rho  \right)^{1/2}. 
  \end{aligned}
 \end{equation}
Moreover in light of \eqref{eq-z-L-infty-small},   the above estimate implies that 
    \begin{equation}\label{eq-est-energy-z-X-s^-2/1-int-time}
  \int_0^t \|\h{1pt}z \h{1pt}\|_X s^{- 1/2} \mathrm{d} s \h{2pt}\lesssim\h{2pt} 
  \left\{ 
 \begin{aligned}
 &\epsilon_*, \h{30pt}\text{if $t \leq1$;}\\[8pt]
&   \epsilon_* + \int_1^t  \|\h{1pt}z \h{1pt}\|_X  \h{1pt} \mathrm{d} s \h{2pt} \lesssim \h{3pt}\epsilon_* + \left( \sigma_{in}^2 \h{1pt} \epsilon_*^{1/2} +  \sigma_{in}^2 \int_0^{\infty} \big|\h{1pt}z_{in}(\rho) \h{1pt}\big|^2 \h{1pt}\rho\h{1.5pt}\mathrm{d}\h{0.5pt}\rho \right)^{1/2}, \h{10pt}\text{if $t > 1$}.
\end{aligned}\right.
\end{equation} 
Applying   \eqref{eq-est-energy-z-X-int-time}-\eqref{eq-est-energy-z-X-s^-2/1-int-time} to \eqref{eq-est-energy-z-exp-2}  then yields
  \begin{equation}\label{eq-est-energy-z-X-sigma^-2}
  \begin{aligned}
 & \exp \left\{\frac{2\mu^2}{m^2}\h{0.5pt}t \right\} \h{1pt} \sigma^2 \int_0^{\infty} |\h{1pt}z \h{1pt}|^2 \h{1pt}\rho\h{1.5pt}\mathrm{d}\h{0.5pt}\rho + c_* \int_0^t \exp \left\{\frac{2\mu^2}{m^2}\h{0.5pt}s \right\} \h{1pt} \| \h{1pt}z \h{1pt}\|_X^2 \h{2pt}\mathrm{d} s  \h{2pt}\\[8pt]
  &\qquad\qquad\lesssim \h{3pt} \sigma_{in}^2 \h{1pt} \left[\epsilon_*^{1/2} +    \int_0^{\infty} \big|\h{1pt}z_{in}(\rho) \h{1pt}\big|^2 \h{1pt}\rho\h{1.5pt}\mathrm{d}\h{0.5pt}\rho +   \left( \sigma_{in}^2 \h{1pt} \epsilon_*^{1/2} +  \sigma_{in}^2 \int_0^{\infty} \big|\h{1pt}z_{in}(\rho) \h{1pt}\big|^2 \h{1pt}\rho\h{1.5pt}\mathrm{d}\h{0.5pt}\rho \right)^{1/2}\right]. 
  \end{aligned}
\end{equation}
It then  follows  from   the assumption (A.1) that 
 \begin{equation}\label{eq-est-energy-z-X-sigma^-2-int-time}
  \int_0^t\, \sigma^{-2} \h{1pt}\|\h{1pt}z \h{1pt}\|_X^2\h{2pt}\mathrm{d} s \h{3pt}
  \lesssim  \h{3pt}\epsilon_*^{1/2} +   \int_0^{\infty} \big|\h{1pt}z_{in}(\rho) \h{1pt}\big|^2 \h{1pt}\rho\h{1.5pt}\mathrm{d}\h{0.5pt}\rho +   \left( \sigma_{in}^2 \h{1pt} \epsilon_*^{1/2} +  \sigma_{in}^2  \int_0^{\infty} \big|\h{1pt}z_{in}(\rho) \h{1pt}\big|^2 \h{1pt}\rho\h{1.5pt}\mathrm{d}\h{0.5pt}\rho \right)^{1/2}. 
\end{equation}\\
\textbf{Step 2.}  \h{3pt}
Integrating  the estimate \eqref{eq-modulation-est-sigma}   from $0$ to $t$,  by \eqref{eq-est-energy-z-X-int-time}-\eqref{eq-est-energy-z-X-sigma^-2-int-time}, Corollary \ref{cor-energy-est-uniform-bound} and the assumption (A.2), we obtain   
 \begin{equation}\label{eq-est-sigma-final}
 \int_0^t \left| \h{1pt}\frac{\sigma'}{\sigma} + \frac{\mu^2}{m^2} \h{1pt}\right| \h{1pt}\mathrm{d} s\h{3pt}
 \lesssim \h{3pt} \epsilon_*^{1/2} + \int_0^{\infty} \big|\h{1pt}z_{in}(\rho) \h{1pt}\big|^2 \rho \h{1.5pt}\mathrm{d}\h{0.5pt}\rho  +  \left( \sigma_{in}^2 \h{1pt} \epsilon_*^{1/2} +  \sigma_{in}^2 \int_0^{\infty} \big|\h{1pt}z_{in}(\rho) \h{1pt}\big|^2 \h{1pt}\rho\h{1.5pt}\mathrm{d}\h{0.5pt}\rho \right)^{1/2}. 
\end{equation}
Since it holds 
\begin{eqnarray*} 
 \left| \h{2pt} \ln \left(\frac{\sigma}{\sigma_{in}}\,\, e^{ \frac{\mu^2}{m^2} t} \right)  \h{1pt}\right| \,=\,  \left| \h{1pt} \ln \sigma + \frac{\mu^2}{m^2} t - \ln \sigma_{in}  \h{1pt}\right| \h{2pt}\leq\h{2pt}\int_0^t \left| \h{1pt}\frac{\sigma'}{\sigma} + \frac{\mu^2}{m^2} \h{1pt}\right| \h{2pt}\mathrm{d} s, 
\end{eqnarray*}we then have, from  the above two estimates, that
\begin{equation}\label{eq-est-sigma-exponential-decay}
\left(1-\varepsilon/4\right)\h{1pt}  e^{ - \frac{\mu^2}{m^2}\h{1pt}t }\h{1pt} \sigma_{in}\h{2pt}\leq \h{2pt}\sigma\h{2pt} \leq\h{2pt} \left(1+\varepsilon/4\right) \h{1pt}e^{ - \frac{\mu^2}{m^2}\h{1pt}t } \h{1pt}\sigma_{in}, \h{20pt}\forall\h{2pt}t \in [\h{1pt}0, T \h{1pt}].
\end{equation}Here we need $\epsilon_*$ and $\delta_* $ in \eqref{eqq-assump-small-initial} to be sufficiently small. The smallness depends  on $m$, $\mu$, $\omega$, $r_0$ and $\varepsilon$.  The assumption (A.1) is then improved.
\\
\\
\textbf{Step 3.} \h{3pt} Now we improve (A.2).  
By Lemma \ref{lem-energy-est-V_2}  and Corollary  \ref{cor-energy-est-uniform-bound},   it holds
 \begin{equation} \label{eq-energy-est-V_2-improve} 
 \frac{\mathrm{d}}{\mathrm{d} \h{0.5pt}t} \int_0^{\infty }V_2^2 + \mathrm{R}_*^2 \int_0^{\infty} V_2^2  \h{3pt}
 \lesssim\h{3pt}  \|\h{1pt}z \h{1pt}\|_X^2 +   \h{1pt}\mathrm{R}_*^6  \h{1pt} \e_*^2 \h{0.5pt}  \left(  \int_0^t  \hspace{1pt}\|\hspace{1pt}v\hspace{1pt}\|_{\mathrm{L}^2}  \h{1pt}\mathrm{d} s \right)^2, \h{20pt} \text{for all $t \in [\h{1pt}0, T\h{1pt}]$ and $\mathrm{R}_* > 0$.}
 \end{equation} 
In light of  \eqref{eq-est-L2-v-h_1-z},     (A.1) and  \eqref{eq-est-energy-z-X-sigma^-2},  the $\mathrm{L}^2$-norm of $v$ satisfies \begin{equation}\label{eq-est-decay-L^2-v} 
\begin{aligned}
  \int_0^{\infty}  | \h{1pt}v  \h{1pt}|^2 \,\lesssim\,  \sigma^2  +\sigma^2   \int_0^{\infty}  \big|\h{1pt}z(\rho)\h{1pt}\big|^2 \h{1pt}\rho\h{1pt} \mathrm{d}\h{1pt} \rho\, \lesssim \,   \sigma_{in}^2\h{1pt}\exp\left\{ - \frac{2\mu^2}{m^2} t \right\}, \h{20pt}\text{for all $t \in [\h{1pt}0, T\h{1pt}]$.}
  \end{aligned}
\end{equation}This estimate then implies
\begin{equation}\label{eq-est-decay-L^2-v-int-in-time}
   \int_0^t  \hspace{1pt}\|\hspace{1pt}v\hspace{1pt}\|_{\mathrm{L}^2}  \h{1pt}\mathrm{d} s   \,\lesssim \,   \sigma_{in} \h{1pt} \int_0^t \exp\left\{ - \frac{\mu^2}{m^2} s \right\}  \h{1pt}\mathrm{d} s \,\lesssim \,\,   \sigma_{in} . 
\end{equation}
Plugging this estimate into the right-hand side of   \eqref{eq-energy-est-V_2-improve}, we get  
 \begin{equation*}  
 \frac{\mathrm{d}}{\mathrm{d} \h{0.5pt}t} \int_0^{\infty }V_2^2 + \mathrm{R}_*^2 \int_0^{\infty} V_2^2  \h{3pt}
 \lesssim  \h{3pt}   \|\h{1pt}z \h{1pt}\|_X^2 +   \h{1pt}\mathrm{R}_*^6  \h{1pt} \e_*^2 \h{2pt}  \sigma_{in}^2.
 \end{equation*}
 By taking $\mathrm{R}_*^2 = 3 ( 1 + t )^{-1}$ in the above estimate, it turns out\begin{eqnarray*}
 \frac{\mathrm{d}}{\mathrm{d}\h{0.5pt}t} \int_0^{\infty} V_2^2 + 3 ( 1 + t )^{-1} \int_0^{\infty} V_2^2 \h{3pt}
  \lesssim  \h{3pt}  \|\h{1pt}z \h{1pt}\|_X^2 + (1 + t)^{-3}\h{1pt} \epsilon_*^2 \h{1pt} \sigma_{in}^2.
\end{eqnarray*}
Multiplying $(1 + t)^3$ on both sides above and integrating from $0$ to $t$,  we have, with an use of  \eqref{eq-est-energy-z-exp-1}, that
\begin{align*}
(1 + t)^3 \int_0^{\infty} V_2^2 \h{3pt}
  &\lesssim \h{3pt} \epsilon_*^2\, \sigma_{in}^2\, t + \int_0^t   \| \h{1pt}z \h{1pt}\|_X^2 \h{1pt}( 1 + s )^3  \h{1pt}\mathrm{d} s \h{2pt}\\[6pt]
   &\lesssim \h{3pt}  \epsilon_*^2\, \sigma_{in}^2\h{2pt}t + \h{2pt} \epsilon_*^{1/2} \h{1pt} \sigma_{in}^2 +  \sigma_{in}^2 \int_0^{\infty} \big|\h{1pt}z_{in}(\rho) \h{1pt}\big|^2 \h{1pt}\rho\h{1pt}\mathrm{d}\h{0.5pt}\rho.
\end{align*}
One should notice that  as defined  in \eqref{eq-for-V_2}, the initial value of $ V_2 $ equals to $0$. This fact is also applied in the last estimate.   Now we choose   $\epsilon_*$ and $\delta_* $ in \eqref{eqq-assump-small-initial} to be sufficiently small. The smallness depends on  $m$, $\mu$, $\omega$, $r_0$ and $\varepsilon$. The above estimate immediately implies
\begin{equation}\label{eq-est-decay-V_1-improved}
 \int_0^{\infty} V_2^2 \,\, \leq \,\, \frac{\e_*^2}{(1+t)^2},\h{20pt}\forall\h{2pt}t \in [\h{1pt}0, T \h{1pt}].
\end{equation}
This  improves the assumption (A.2). By the above arguments, we can extend the local solution $\big(W, V, \varphi\big)$ globally in time.  Furthermore by taking $t \rightarrow \infty$, the estimate \eqref{eq-int-L^2-q} follows from \eqref{eq-est-energy-z-X-sigma^-2}. \\
\\
\textbf{Step 4.}\h{3pt} In this last step we give a time decay estimate for the $\mathrm{L}^2$-norm of $W^*_2\big/ r$. Let
\begin{equation*} 
 \mathrm{E}^*_2\, :=\,  \int_0^{\infty}   |\h{1.5pt}q\h{1.5pt}|^2 + V_2^2 + \frac{\big( W_2^* \big)^2}{r^2}. 
\end{equation*}
Adding the equalities 
\eqref{eq-energy-est-q-1}, \eqref{eq-energy-V_2} and \eqref{eq-energy-W^*_2}, by   \eqref{eq-grad-W^*_2},  we have 
 \begin{equation}\label{energy-e2} 
 \begin{aligned}
 \frac{1}{2}\h{1pt}\frac{\mathrm{d}}{\mathrm{d} \h{1pt}t} \h{1.5pt} \mathrm{E}^*_2 + \int_0^{\infty}  \big| \h{1pt}\mathrm{L}_{m} q \h{1pt}\big|^2 + \big( \p_r V_2 \big)^2 + \frac{ \big( \p_r W^*_2 \big)^2}{r^2} &= - \mu^2 \int_0^{\infty} \big< \h{1pt}\mathrm{L}_{m} q, \varphi_{3} \h{0.5pt}v \h{1pt}\big>\\[8pt]
 &  \quad -   \int_0^{\infty}\left[  \frac{m\h{0.5pt} W^{os} }{r^2} + \frac{ m\h{0.5pt}W^*_1}{r^2} +\mu V_1\right] \h{1pt} \big< \h{0.5pt} \mathrm{L}_{m} q, i \h{0.5pt}v \h{0.5pt}\big> .
 \end{aligned}
\end{equation}
Here we have used 
  integration by parts to obtain the last integral above (refer to \eqref{eq-integration-by-part-Lm} and \eqref{eq-est-W-os-Lm-q}).  In light of  the bound \eqref{eq-W^os-bound} and  Lemmas \ref{lem-est-V_1}, \ref{lem-est-W^*_1},  for all $t > 0$, it holds 
\begin{eqnarray*}
\left\| \h{1pt} \frac{m\h{0.5pt} W^{os} }{r^2} + \frac{ m\h{0.5pt}W^*_1}{r^2} +\mu V_1 \h{1pt}\right\|_{\mathrm{L}^{\infty}}    \,\,
\lesssim \,\,  \h{2pt}  t^{- 1 / 2 } \h{2pt}\big\| \h{1pt}V_{in} \h{1pt} \big\|_{\mathrm{L}^2} +  t^{- 1} \h{1pt}\left[\h{0.5pt}  1 +\left\| \h{1pt} \frac{W^*_{in}}{r} \h{1pt} \right\|_{\mathrm{L}^2}\right].
\end{eqnarray*} Applying this estimate and Young's inequality to the right-hand side of \eqref{energy-e2}, we obtain  \begin{eqnarray}\label{eq-energy-est-E^*_2}
 \begin{aligned}
 \h{1pt}\frac{\mathrm{d}}{\mathrm{d} \h{1pt}t} \h{1.5pt} \mathrm{E}^*_2 + \int_0^{\infty}  \big| \h{1pt}\mathrm{L}_{m} q \h{1pt}\big|^2 + \big( \p_r V_2 \big)^2 + \frac{ \big( \p_r W^*_2 \big)^2}{r^2}\, \,\lesssim\,   \,  \left( 1 \h{1pt} +\h{1pt}  t^{- 2} \right) \h{1pt}\int_0^{\infty} |\h{1pt}v \h{1pt}|^2.
 \end{aligned}
\end{eqnarray}  
By \eqref{eq-z-L-infty-small},  the coercivity estimate in Lemma  \ref{eq-coer-L_m}  holds. Thus  Lemmas \ref{lem-energy-est-W^*_2} and \ref{eq-coer-L_m} assert  that 
  \begin{eqnarray*} 
 \frac{\mathrm{d}}{\mathrm{d} \h{0.5pt}t} \int_0^{\infty } \frac{\left(W^*_{2}\right)^2}{ r^2 }  + \mathrm{R}_*^2 \int_0^{\infty}\frac{\left(W^*_{2}\right)^2}{ r^2 }   \h{3pt}
  \lesssim   \h{3pt}   \int_0^{\infty}  \big| \h{1pt}\mathrm{L}_{m} q \h{1pt}\big|^2 +   \h{1pt}\mathrm{R}_*^8 \left(  \int_0^t \| \hspace{1pt}q  \hspace{1pt}\|_{\mathrm{L}^2}\hspace{1pt}\|\hspace{1pt}v\hspace{1pt}\|_{\mathrm{L}^2}  \h{1pt}\mathrm{d} s \right)^2, \h{20pt}\text{for all $t > 0$ and $\mathrm{R}_* > 0$.}
 \end{eqnarray*}
 Multiplying a small positive constant  $c_1 $ on both sides above and summing 
the resulting estimate with the estimate  \eqref{eq-energy-est-E^*_2}, we get  
  \begin{eqnarray*} 
  \begin{aligned}
&\big(1+c_1\big) \,\frac{\mathrm{d}}{\mathrm{d} \h{0.5pt}t} \int_0^{\infty } \frac{\left(W^*_{2}\right)^2}{ r^2 }  \h{1pt}+\h{1pt} c_1\h{1pt} \mathrm{R}_*^2 \int_0^{\infty}\frac{\left(W^*_{2}\right)^2}{ r^2 }  \,+\,\frac{\mathrm{d}}{\mathrm{d} \h{0.5pt}t}  \int_0^{\infty}   |\h{1.5pt}q\h{1.5pt}|^2 + V_2^2  \h{1pt} \h{2pt}
\\[8pt]
 &\qquad\qquad\qquad\qquad\qquad\lesssim  \h{3pt}  \big( 1 \h{1pt} +\h{1pt}  t^{- 2}\big) \h{1pt}\int_0^{\infty} |\h{1pt}v \h{1pt}|^2 
\,+  \, \mathrm{R}_*^8 \,\left(  \int_0^t \| \hspace{1pt}q  \hspace{1pt}\|_{\mathrm{L}^2}\hspace{1pt}\|\hspace{1pt}v\hspace{1pt}\|_{\mathrm{L}^2}  \h{1pt} \mathrm{d} s \right)^2.  
 \end{aligned}
 \end{eqnarray*}
By  \eqref{eq-est-decay-L^2-v}-\eqref{eq-est-decay-L^2-v-int-in-time}   and Corollary  \ref{cor-energy-est-uniform-bound}, this estimate implies  \begin{eqnarray*} 
  \begin{aligned}
&  \,\frac{\mathrm{d}}{\mathrm{d} \h{0.5pt}t} \int_0^{\infty } \frac{\left(W^*_{2}\right)^2}{ r^2 }  + c_2\h{1pt} \mathrm{R}_*^2 \int_0^{\infty}\frac{\left(W^*_{2}\right)^2}{ r^2 }  \,+\, c_3\h{1pt}\frac{\mathrm{d}}{\mathrm{d} \h{0.5pt}t}  \int_0^{\infty}   |\h{1.5pt}q\h{1.5pt}|^2 + V_2^2   \h{3pt}\lesssim  \h{3pt}  \big( 1 \h{1pt} +\h{1pt}  t^{- 2}\big) \h{1pt} \sigma_{in}^2\h{1pt}\exp\left\{ - \frac{2\mu^2}{m^2} t \right\}
\,+  \, \mathrm{R}_*^8  \h{1.5pt}\e_*^2 \h{1.5pt}\sigma_{in}^2,
 \end{aligned}
 \end{eqnarray*}where $c_2 $ and $c_3 $ are two positive constants.  Now we take $c_2 \h{1pt}\mathrm{R}_*^2 = 3\h{1pt}t^{-1}$ and multiply $t^3$ on both sides above. It then turns out  
   \begin{eqnarray*} 
  \begin{aligned}
 \frac{\mathrm{d}}{\mathrm{d} \h{0.5pt}t} \left[ t^3\int_0^{\infty } \frac{\left(W^*_{2}\right)^2}{ r^2 }  \right]  \h{2pt}+ c_3\h{1pt} t^3\h{1pt}\frac{\mathrm{d}}{\mathrm{d} \h{0.5pt}t}  \int_0^{\infty}   |\h{1.5pt}q\h{1.5pt}|^2 + V_2^2  \h{1pt} \h{3pt}
 &\lesssim  \h{3pt}  \sigma_{in}^2\h{1pt} \big( t \h{1pt} + \h{1pt}  t^{ 3} \h{1pt}\big) \h{1pt}\exp\left\{ - \frac{2\mu^2}{m^2} t \h{1pt}\right\}  +   \h{1pt} t^{-1}\h{1.5pt} \e_*^2 \h{1.5pt}  \sigma_{in}^2 . 
 \end{aligned}
 \end{eqnarray*}
By integrating the above estimate  from $1$ to $t$, it follows, for all $t>1$, that
     \begin{equation}\label{eq-est-W^*_2-splitting-2}
  \begin{aligned}
  t^3\int_0^{\infty } \frac{\left(W^*_{2}\right)^2}{ r^2 }  \,+ \,c_3\int_1^t  \h{1pt} s^3\h{1pt}\left( \frac{\mathrm{d}}{\mathrm{d} \h{0.5pt}s} \int_0^{\infty}   |\h{1pt}q\h{1pt}|^2 + V_2^2 \right)\h{1pt} {\mathrm{d} \h{0.5pt}s}  \h{3pt}
 &\lesssim   \h{3pt}\mathrm E^*_2(1) \,+\, 1+  \h{1pt} t .
  \end{aligned}
 \end{equation}
For the second integral on the left-hand side    above, we have, through integration by parts, that 
    \begin{equation*} 
  \begin{aligned}
 \int_1^t  \h{1pt} s^3\h{1pt}\left( \frac{\mathrm{d}}{\mathrm{d} \h{0.5pt}s} \int_0^{\infty}   |\h{1pt}q\h{1pt}|^2 + V_2^2 \right)\h{1pt} {\mathrm{d} \h{0.5pt}s} 
 \,\,&\geq  \,\, t^3    \int_0^{\infty}   |\h{1pt}q\h{1pt}|^2 + V_2^2 \h{1pt}  -\,\mathrm E^*_2(1)\,  - 3 \int_1^t  \h{1pt} s^2\h{1pt}   \left(\int_0^{\infty}   |\h{1pt}q\h{1pt}|^2 + V_2^2\right)\h{1pt} {\mathrm{d} \h{0.5pt}s}\\[8pt]
&\gtrsim  \,\,-\,\mathrm E^*_2(1)\,-\,  \int_1^t  \h{1pt} s^2\h{1pt} \left\| \hspace{1pt}q  \hspace{1pt}\right\|_{\mathrm{L}^2}^2 \h{1pt} {\mathrm{d} \h{0.5pt}s}\,-\, \int_1^t  \h{1pt} s^2\h{1pt}    \left\| \hspace{1pt} V_2  \hspace{1pt}\right\|_{\mathrm{L}^2}^2 \h{1pt} {\mathrm{d} \h{0.5pt}s}, \h{20pt}\forall\h{3pt}t > 1.
 \end{aligned}
 \end{equation*} 
Since (A.2) holds for all $t > 0$, this fact together with   \eqref{eq-int-L^2-q}, Lemma \ref{equiv-norms},  and the above estimate,  gives us
    \begin{equation*} 
  \begin{aligned}
 \int_1^t  \h{1pt} s^3\h{1pt}\left( \frac{\mathrm{d}}{\mathrm{d} \h{0.5pt}s} \int_0^{\infty}   |\h{1pt}q\h{1pt}|^2 + V_2^2 \right)\h{1pt} {\mathrm{d} \h{0.5pt}s} \,\,
\gtrsim  \,\,- \,\Big[\,\mathrm E^*_2\h{1pt}(1)\,+\, 1 +\, t\,\Big].
 \end{aligned}
 \end{equation*}
Plugging  this estimate into \eqref{eq-est-W^*_2-splitting-2} then yields   
    \begin{equation}\label{eq-est-W^*_2-splitting-last}
  \begin{aligned}
  t^3\int_0^{\infty } \frac{\left(W^*_{2}\right)^2}{ r^2 }  \,\,\lesssim    \,\,\mathrm E^*_2(1) \,+\, 1+  \h{1pt} t \qquad \hbox{for all $t>1$}.
  \end{aligned}
 \end{equation}
 Since $V_1$ and $W^*_1$ solve  \eqref{eq-for-V_1} and \eqref{eq-W^*_1}, respectively, it satisfies 
 $$ \int_0^{\infty}   V_1 ^2\, \leq\, \int_0^{\infty}   V_{in}^2  \quad \hbox{and}\quad  \int_0^{\infty}    \frac{\big( W_1^* \big)^2}{r^2}\,\leq\,\int_0^{\infty}    \frac{\big( W_{in}^* \big)^2}{r^2}. $$
These estimates combined with  Lemma \ref{lem-energy-est-uniform-bound}  show that     
 $$\mathrm E^*_2(t) \,\lesssim \,\mathrm E^*(t) + \int_0^{\infty}   V_1 ^2 + \frac{\big( W_1^* \big)^2}{r^2} \,\,\lesssim \, \,\mathrm E(0) \quad \hbox{for all $t>0$}. $$
Applying  this estimate to \eqref{eq-est-W^*_2-splitting-last} yields  the last estimate in \eqref{eq-est-time-decay}. The proof is finished.  \end{proof}

 In the end we prove Corollary \ref{cor-main-result}.

\begin{proof}[\bf Proof of Corollary \ref{cor-main-result}] 
By the proof of Theorem \ref{thm-large-time-behavior}, it remains to obtain the convergence of the  rotation parameter $\Theta$ as $t\to\infty$.  Using  the same derivations as for \eqref{eq-est-modulation-theta'-first} and employing \eqref{eq-est-HT-with-h_1}, we have   
\begin{equation*}
 \left| \h{1pt}\Theta'\h{2pt}\right| \h{2pt}\lesssim\h{2pt}    \,\|\h{1pt}z\h{1pt}\|_X  + \| \h{1pt}z \h{1pt}\|_X \left|\h{2pt}   \frac{\sigma'}{\sigma} + \frac{\mu ^2}{m^2}  \h{2pt}\right|  + \sigma^{-2} \h{0.5pt}\| \h{1pt}z \h{1pt}\|_X^2.
\end{equation*}Here one just needs to take  $V\equiv0$, $W^*\equiv0$  and $\omega=0$  in the proof of \eqref{eq-est-modulation-theta'-first}.
Integrating the above estimate from $0$ to $\infty$, by  the estimates in  \eqref{eq-est-energy-z-X-int-time}, \eqref{eq-est-energy-z-X-sigma^-2-int-time} and   \eqref{eq-est-sigma-final}, we can show that  
\begin{equation*}
\int_0^\infty\, \left| \h{1pt}\Theta'\h{2pt}\right| \,\mathrm d s \h{2pt} \lesssim  \h{3pt}   \epsilon_*^{1/2} + \int_0^{\infty} |\h{1pt}z_{in}(\rho) \h{1pt}|^2 \rho \h{1.5pt}\mathrm{d}\h{0.5pt}\rho  +  \left( \sigma_{in}^2 \h{1pt} \epsilon_*^{1/2} +  \sigma_{in}^2 \int_0^{\infty} |\h{1pt}z_{in} (\rho)\h{1pt}|^2 \h{1pt}\rho\h{1.5pt}\mathrm{d}\h{0.5pt}\rho \right)^{1/2}.
\end{equation*} 
This implies that  $\Theta(t) $ converges to some constant  $\Theta_\infty$ as $t\to\infty$. The proof is finished.
\end{proof} \
 
\noindent{\bf Conflict of interest:} WE DECLARE THAT THERE IS NO CONFLICT OF INTERESTS.


  \end{document}